\def\biblio{\bibliography{duality}\bibliographystyle{alpha}}
\definecolor{dark-red}{rgb}{0.5,0.15,0.15}
\definecolor{dark-blue}{rgb}{0.15,0.15,0.6}
\definecolor{dark-green}{rgb}{0.15,0.6,0.15}
\newcommand{\ctimes}{\boxtimes}
\newcommand{\iHom}{\underline{\operatorname{Hom}}}
\newcommand{\noteD}[1]{{\color{dark-red}Drew: #1}}
\renewcommand*{\backref}[1]{}
\renewcommand*{\backrefalt}[4]{%
  \ifcase #1 %
No citations.
  \or
(cit. on p. #2).%
  \else
(cit on pp. #2).%
  \fi%
}
\newtheorem{thmx}{Theorem}
\newtheorem{corx}{Corollary}
\newtheorem{thm}{Theorem}[section]
\newtheorem{cor}[thm]{Corollary}
\newtheorem{prop}[thm]{Proposition}
\newtheorem{lem}[thm]{Lemma}
\theoremstyle{definition}
\newtheorem{defn}[thm]{Definition}
\newtheorem{exmp}[thm]{Example}
\newtheorem{example}[thm]{Example}
\theoremstyle{remark}
\newtheorem{rem}[thm]{Remark}
\let\c@equation\c@thm
\numberwithin{equation}{section}
\DeclareMathOperator{\Sp}{Sp}
\DeclareMathOperator{\Hom}{Hom}
\DeclareMathOperator{\colim}{colim}
\DeclareMathOperator{\cA}{\mathcal{A}}
\DeclareMathOperator{\cC}{\mathcal{C}}
\DeclareMathOperator{\cD}{\mathcal{D}}
\DeclareMathOperator{\cE}{\mathcal{E}}
\DeclareMathOperator{\cS}{\mathcal{S}}
\DeclareMathOperator{\cI}{\mathcal{I}}
\DeclareMathOperator{\cG}{\mathcal{G}}
\DeclareMathOperator{\fX}{\mathfrak{X}}
\DeclareMathOperator{\Ext}{Ext}
\DeclareMathOperator{\Tor}{Tor}
\DeclareMathOperator{\Spec}{Spec}
\DeclareMathOperator{\Mod}{Mod}
\DeclareMathOperator{\Stable}{Stable}
\DeclareMathOperator{\Comod}{Comod}
\DeclareMathOperator{\Ch}{Ch}
\DeclareMathOperator{\D}{\mathcal{D}}
\DeclareMathOperator{\Loc}{Loc}
\DeclareMathOperator{\Thick}{Thick}
\DeclareMathOperator{\Ind}{Ind}
\DeclareMathOperator{\QCoh}{QCoh}
\DeclareMathOperator{\IndCoh}{IndCoh}
\newcommand{\N}{\mathbb{N}}
\newcommand{\Q}{\mathbb{Q}}
\newcommand{\sh}{H}
\newcommand{\Locid}[1]{\mathrm{Loc}_{#1}^{\otimes}}
\newcommand{\fZ}{\mathfrak{Z}}
\newcommand{\LM}{\widehat{\textnormal{Mod}}_A}
\newcommand{\wComod}{\widehat{\Comod}}
\newcommand{\wPsi}{{\widehat{\Psi}}}
\newcommand{\wA}{{\widehat{A}}}
\newcommand{\wpsi}{{\widehat{\psi}}}
\newcommand{\wI}{\widehat{I}}
\newcommand{\wM}{\widehat{M}}
\newcommand{\wN}{\widehat{N}}
\newcommand{\p}{\frak{p}}
\newcommand{\m}{\frak{m}}
\newcommand{\cal}{\mathcal}
\newcommand{\xr}{\xrightarrow}
\newcommand{\Z}{\mathbb{Z}}
\Crefname{figure}{Figure}{Figures}
\Crefname{assu}{Assumption}{Assumptions}
\Crefname{thmx}{Theorem}{Theorems}
\newcommand{\psilim}{\lim_{\Psi,k}}
\renewcommand{\frak}{\mathfrak}
\newcommand{\wDelta}{\widehat \Delta}
\newcommand{\ilim}{\lim{\vphantom \lim}}
\let\lim\relax
\DeclareMathOperator{\lim}{lim}
\newcommand{\Dtors}{\D^{{I-\mathrm{tors}}}}
\newcommand{\Dhtors}{\D^{{I-\mathrm{tors}}}_{\mathrm{cohom}}
}
\newcommand{\Dcmpl}{\D^{{I-\mathrm{cmpl}}}}
\newcommand{\cComod}{\Comod^c}
\newcommand{\cctimes}{\hat{\ctimes}}
\newcommand{\fm}{\mathfrak{m}}
\newcommand{\loc}{\mathrm{loc}}
\newcommand\noloc{%
  \nobreak
  \mspace{6mu plus 1mu}
  {:}
  \nonscript\mkern-\thinmuskip
  \mathpunct{}
  \mspace{2mu}
}
\title{Derived completion for comodules}
\author{Tobias Barthel}
\address{Department of Mathematical Sciences, University of Copenhagen, Universitetsparken 5, 2100 K{\o}benhavn {\O}, Denmark}
\email{tbarthel@math.ku.dk}
\author{Drew Heard}
\address{Fakult{\"a}t f{\"u}r Mathematik, Universit{\"a}t Regensburg}
\email{drew.k.heard@gmail.com}
\author{Gabriel Valenzuela}
\address{Max-Planck-Institut f\"ur Mathematik, Bonn, Germany}
\email{gvalenzuela@mpim-bonn.mpg.de}
\date{\today}
\subjclass[2010]{55P60 (13D45, 14B15, 55U35)}
\begin{document}

\begin{abstract}
The objective of this paper is to introduce and study completions and local homology of comodules over Hopf algebroids, extending previous work of Greenlees and May in the discrete case. In particular, we relate module-theoretic to comodule-theoretic completion, construct various local homology spectral sequences, and derive a tilting-theoretic interpretation of local duality for modules. Our results translate to quasi-coherent sheaves over global quotient stacks and feed into a novel approach to the chromatic splitting conjecture. 
\end{abstract}

\maketitle

{\hypersetup{linkcolor=black}\tableofcontents}
\def\biblio{}

\section*{Introduction}
Completion of non-finitely generated modules is pervasive throughout stable homotopy theory, as amply demonstrated in \cite{GM_MU}, for example. Its left derived functors can be interpreted as a type of local homology, which in turn gives rise to a local duality theory for modules over commutative rings \cite{gm_localhomology}. On the other hand, the theory of comodules over a Hopf algebroid arises naturally in the context of generalized homology theories \cite[Appendix A1]{ravenel_86}; the homology of a spectrum has the structure of a comodule over the ring of cooperations. Moreover, in light of \cite{naumann_stack_2007}, we can translate results about comodules into quasi-coherent sheaves over certain algebraic stacks. The latter have shown to be fundamental in moduli problems where the objects one wishes to parameterize have nontrivial automorphisms. 

The goal of this paper is to generalize local homology from modules over commutative rings to comodules over Hopf algebroids, which among other applications plays a central role in an algebraic approach to Hopkins' chromatic splitting conjecture \cite{ctc}. Algebraically, this extends the work of Greenlees and May \cite{gm_localhomology} on derived functors of completion, and in geometric terms it is akin to the passage from affine schemes to quotient stacks. However, while local cohomology admits a canonical and well-behaved extension from modules over commutative rings to comodules over Hopf algebroids, the corresponding generalization of local homology is considerably more complicated.

This complication is already visible at the non-derived level: Unlike the case of modules, for a Hopf algebroid $(A,\Psi)$ the naive completion $C^I(-) = \lim_k(A/I^k\otimes -)$ at an ideal $I \subseteq A$ does not usually define an endofunctor on the category of comodules $\Comod_{\Psi}$, but rather takes values in a category of completed comodules \cite{dev_cor}. To remedy this, one has to replace the limit $\lim_k$ of the underlying $A$-modules by the inverse limit in comodules, which leads to a comodule completion functor $C_{\Psi}^I$. We thus begin in \cref{sec:completion} with an analysis of these non-derived completion functors and in particular the relation between $C^I$ and $C_{\Psi}^I$.

Given an inverse system of $\Psi$-comodules, the key problem thus becomes to compare the comodule limit with the underlying module limit, and our first result provides conditions under which the former can be computed from the latter. This motivates the introduction of a class of Hopf algebroids which we call true-level (with respect to the ideal $I$), see \Cref{def:truelevel}.  We then use a theorem of Enochs to deduce concrete conditions that imply the true-level property; a particular example of a true-level Hopf algebroid highly relevant for applications to stable homotopy theory is given by $(A,\Psi)= (\cE_*,\cE_*\cE)$ for a variant $\cE$ of Johnson--Wilson theory due to Baker~\cite{baker_$i_n$-local_2000}. Note that we always write $\ctimes$ for the underived tensor product.

Let $N$ be a complete $\Psi$-comodule. If $(A,\Psi)$ is a true-level Hopf algebroid with respect to $I$, then we prove that the $A$-module $\iota N$ defined by the following pullback square
\[
\xymatrix{\iota N \ar[r] \ar[d] & \Psi \ctimes N \ar[d] \\
N \ar[r] & C^I(\Psi \ctimes N)}
\]
acquires a natural structure as a $\Psi$-comodule. Here the natural map $\iota N \to N$ is injective and $\iota N$ is the largest possible $A$-submodule of $N$ that carries a natural $\Psi$-comodule structure. We note that $\iota$ roughly speaking plays the role of a (non-existent) right adjoint to $C^I$ on the category of comodules.

\begin{thmx}[\cref{thm:completionformula}]
If $(A,\Psi)$ is true-level, then for any $\Psi$-comodule $M$, there is an equivalence of $\Psi$-comodules $C^I_{\Psi}(M) \simeq \iota C^I(M)$.
\end{thmx}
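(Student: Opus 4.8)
The plan is to compute the defining limit of $C^I_\Psi(M)$ in $\Comod_\Psi$ explicitly and match it with the pullback that produces $\iota C^I(M)$. Recall that $C^I_\Psi(M) = \lim_k^{\Comod_\Psi}(A/I^k \otimes M)$ while $C^I(M) = \lim_k^{\Mod_A}(A/I^k \otimes M)$, and that the comodule cone over the tower forgets to a module cone, producing a natural comparison map $C^I_\Psi(M) \to C^I(M)$. Since $M$ is a comodule, applying $C^I$ to its coaction equips $C^I(M)$ with a completed coaction $C^I(\psi)\colon C^I(M) \to C^I(\Psi \ctimes M)$, exhibiting $C^I(M)$ as a complete comodule, so that $\iota C^I(M)$ is defined. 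Along the way I would record the identification $C^I(\Psi \ctimes C^I(M)) \simeq C^I(\Psi \ctimes M)$, which rewrites the pullback defining $\iota C^I(M)$ as $C^I(M) \times_{C^I(\Psi \ctimes M)} (\Psi \ctimes C^I(M))$.

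The engine of the computation is the cofree adjunction $U \dashv (\Psi \ctimes -)$ between $\Mod_A$ and $\Comod_\Psi$. For a comodule $C$, a module map $C \to A/I^k \otimes M$ is a comodule map precisely when it equalizes the two canonical maps into $\Psi \ctimes (A/I^k \otimes M)$, so $\Hom_{\Comod_\Psi}(C, A/I^k \otimes M)$ is the equalizer of $\Hom_{\Mod_A}(C, A/I^k \otimes M) \rightrightarrows \Hom_{\Mod_A}(C, \Psi \ctimes (A/I^k \otimes M))$. Taking $\lim_k$ and commuting it past these $\Hom$-sets identifies $\Hom_{\Comod_\Psi}(C, C^I_\Psi(M))$ with the equalizer of $\Hom_{\Mod_A}(C, C^I(M)) \rightrightarrows \Hom_{\Mod_A}(C, C^I(\Psi \ctimes M))$, one leg being post-composition with $C^I(\psi)$ and the other being induced by the coaction of $C$. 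Re-expressing the module $\Hom$-sets through the cofree adjunction corepresents this functor, yielding a description of $C^I_\Psi(M)$ as a comodule equalizer of cofree comodules whose underlying data is governed by the maps $C^I(M) \rightrightarrows C^I(\Psi \ctimes M)$.

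It then remains to identify this equalizer with $\iota C^I(M)$. Conceptually both are the \emph{largest genuine sub-comodule} of the complete comodule $C^I(M)$: the comparison map realizes $C^I_\Psi(M)$ as a sub-comodule of $C^I(M)$ whose coaction lands in the uncompleted $\Psi \ctimes C^I(M)$, and the maximality built into the pullback defining $\iota$ gives a factorization $C^I_\Psi(M) \to \iota C^I(M)$; conversely, $\iota C^I(M)$ is a genuine comodule whose structure makes the projections $\iota C^I(M) \hookrightarrow C^I(M) \to A/I^k \otimes M$ into a compatible family of comodule maps, so the universal property of the comodule limit supplies $\iota C^I(M) \to C^I_\Psi(M)$. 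I would then check that these two maps are mutually inverse, both lying over $C^I(M)$.

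The crux --- and the only place the true-level hypothesis is needed --- is the passage from comodule-level to module-level data: showing that the forgetful functor sends the comodule limit (equivalently, the comodule equalizer) to the corresponding module pullback, so that the underlying $A$-module of $C^I_\Psi(M)$ is exactly $C^I(M) \times_{C^I(\Psi \ctimes M)} (\Psi \ctimes C^I(M))$ and the comparison map $C^I_\Psi(M) \to C^I(M)$ is injective. This is precisely the comparison between comodule and module limits that true-level is designed to control, and I expect verifying it --- together with the self-consistency of the coaction on $\iota C^I(M)$, namely that it genuinely factors through $\Psi \ctimes \iota C^I(M)$ and not merely through $\Psi \ctimes C^I(M)$ --- to be the main obstacle.
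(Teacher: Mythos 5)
Your proposal is correct and takes essentially the same route as the paper: the paper's proof specializes \Cref{prop:comlimaspb} to the tower $(M \ctimes A/I^k)$ --- which is precisely the comodule-limit-to-module-pullback comparison you identify as the crux, with the true-level hypothesis supplying its monomorphism conditions --- and then matches that pullback against the one defining $\iota C^I(M)$ via the natural isomorphism $C^I(\Psi \ctimes C^I(M)) \cong C^I(\Psi \ctimes M)$ that you also record. The coaction compatibility you flag at the end is handled in the paper by rerunning the diagram chase of \Cref{lem:iotalifting}.
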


We then move on to a study of derived completion. For a suitable Hopf algebroid $(A,\Psi)$ and ideal $I \subseteq A$ we construct a local homology functor for comodules $\Lambda^I$. Our construction is dictated by the general local duality framework of \cite{bhv}, and \Cref{sec:derivedcompletion} studies the properties of the resulting functors. In particular, we work with a suitable enlargement $\Stable_{\Psi}$ of the derived category of comodules with some desirable categorical properties; geometrically speaking, this corresponds to the passage from quasi-coherent to ind-coherent sheaves. 

One of the first new phenomena we encounter is that the local homology of a comodule can be non-zero both in positive and negative degrees, which may be interpreted as a measure of the stackiness of the Hopf algebroid under consideration. Consequently, the relation between derived functors of completion and local homology turns out to be more subtle.

For an arbitrary Hopf algebroid we construct a spectral sequence of the form
\[
E_2^{p,q} = \ilim^p_{\Psi,k}\Tor^{\Psi}_q(A/I^k,M) \implies H_{q-p}(\Lambda^I(M))
\]
computing the local homology of a comodule $M$ in terms of more familiar functors. In parallel to the equivalence of local homology with $I$-adic completion when restricted to finitely presented modules, if $A$ is Noetherian and $M$ is a compact comodule, we show that this spectral sequence collapses to yield
\[ 
\xymatrix{H_{-s}(\Lambda^{I}M) \ar[r]^-{\simeq} & {\ilim}_{\Psi,k}^{s}M \boxtimes A/I^k}
\]
for all $s\geq0$. Moreover, we give an example to show that, contrary to the case of $A$-modules, the functors $H_*(\Lambda^I(M))$ cannot, in general, be computed by the left or right derived functors of $C^I_{\Psi}(-)$.  

In the case of a discrete Hopf algebroid $(A,A)$ there is a natural equivalence $\Stable_{A} \simeq \cD(A)$. We can thus use the relationship between $\Lambda^I$ and the left derived functors of completion (known as the derived functors of $L$-completion) to prove some results about the derived functors of $C^I$ on $\Mod_A$. We also produce a criterion for when an $A$-module is $L$-complete, i.e., for when $M$ is in the category $\LM$ of $L$-complete $A$-modules, generalizing Bousfield and Kan's $\Ext$-$p$ completeness criterion.  

\begin{thmx}[$\Ext$-$I$ completeness criterion, \cref{thm:Lcompletecriterion}]
    Let $A$ be a commutative ring and $I\subseteq A$ an ideal generated by a regular sequence $x_1, x_2, \dots ,x_n$. If $M$ is an $A$-module, then
    \[
        M \text{ is $L$-complete } \Longleftrightarrow \Ext_A^q(x_i^{-1}A/(x_1,\dots, x_{i-1}),M)=0 \text{ for all }1 \le i \le n \text{ and all } q \ge 0.
    \]
\end{thmx}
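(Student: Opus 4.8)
The plan is to reduce the statement to the classical localization criterion for derived completeness and then run a short dévissage governed by the regular sequence, with derived Nakayama doing the essential work. Recall from the local duality framework of \cite{bhv} that the completion map is the unit $M \to \Lambda^I M$ and that $M$ is $L$-complete precisely when this map is an equivalence in $\D(A)$; equivalently, writing $\Gamma_I A \to A \to \check C_I A$ for the fibre sequence attached to the stable Koszul complex, when $\RHom_A(\check C_I A, M) \simeq 0$. Since $\check C_I A$ is assembled from the localizations $A[1/x_{i_1}\cdots x_{i_k}]$, each of which is a filtered colimit of shifts of some $A[1/x_i]$, derived $I$-completeness is detected on the individual generators: $M$ is $L$-complete if and only if $\RHom_A(A[1/x_i], M) \simeq 0$ for every $i$. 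It therefore suffices to prove that this condition is equivalent to the vanishing of $\RHom_A(Q_i, M)$ for all $i$, where I abbreviate $Q_i := x_i^{-1}A/(x_1,\dots,x_{i-1}) = A[1/x_i]/(x_1,\dots,x_{i-1})$ (localization commutes with the quotient); note $Q_1 = A[1/x_1]$.

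The bridge between the two families is the identity $Q_i \simeq A[1/x_i] \otimes_A \Kos(x_1,\dots,x_{i-1})$ in $\D(A)$: inverting $x_i$ is flat and keeps $x_1,\dots,x_{i-1}$ regular, so the Koszul complex resolves $A[1/x_i]/(x_1,\dots,x_{i-1})$. As $\Kos(x_1,\dots,x_{i-1})$ is a perfect complex that is self-dual up to a shift, I can move it across the internal hom to obtain, for $W := \RHom_A(A[1/x_i], M)$, a natural equivalence $\RHom_A(Q_i, M) \simeq W \otimes_A A/(x_1,\dots,x_{i-1})$ up to a shift. In particular $\RHom_A(A[1/x_i], M) \simeq 0$ forces $\RHom_A(Q_i, M) \simeq 0$, which disposes of the forward implication at once.

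For the converse I would induct on $i$, the case $i=1$ being $Q_1 = A[1/x_1]$. Assume $\RHom_A(Q_j, M) \simeq 0$ for all $j$ and, inductively, $\RHom_A(A[1/x_j], M) \simeq 0$ for $j < i$; I must show $W = \RHom_A(A[1/x_i], M)$ vanishes. Two facts combine. First, $W$ is derived $(x_1,\dots,x_{i-1})$-complete: for $j < i$ one has $\RHom_A(A[1/x_j], W) \simeq \RHom_A(A[1/x_ix_j], M)$, and since $A[1/x_ix_j]$ is a sequential colimit of copies of $A[1/x_j]$ this equals $\lim_k \RHom_A(A[1/x_j], M) \simeq 0$. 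Second, the identity of the previous paragraph together with $\RHom_A(Q_i, M) \simeq 0$ gives $W \otimes_A A/(x_1,\dots,x_{i-1}) \simeq 0$. Derived Nakayama --- a derived $(x_1,\dots,x_{i-1})$-complete object that vanishes after applying $-\otimes_A A/(x_1,\dots,x_{i-1})$ is itself zero --- then yields $W \simeq 0$, completing the induction. The main obstacle is exactly this converse, and more precisely the recognition that the inductive hypothesis is what upgrades $W$ to a derived complete object so that Nakayama applies: the forward direction and the Koszul identification are formal, but without the completeness of $W$ the vanishing of $W \otimes_A A/(x_1,\dots,x_{i-1})$ carries no content. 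A secondary point requiring care is the colimit-to-limit computation of $\RHom$ out of the iterated localizations, where one should verify that the relevant $\lim^1$-terms vanish.
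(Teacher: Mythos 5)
Your proof is correct, but it takes a genuinely different route from the paper's. The paper works in the generality of a flat Hopf algebroid: using \Cref{lem:pushgen} and the fact that $L_I$ is smashing, it shows that $\Stable_{\Psi}^{I-\loc}$ is the localizing subcategory generated by the objects $x_i^{-1}G/(x_1,\dots,x_{i-1})$ for $G \in \cG_{\Psi}$ --- via a \emph{downward} induction on $i$ using the fiber sequences $G/(x_1,\dots,x_{i-1}) \to x_i^{-1}G/(x_1,\dots,x_{i-1}) \to G/(x_1,\dots,x_{i-1},x_i^{\infty})$ --- deduces the characterization of $\Lambda^I$-local objects (\Cref{cor:Lamdalocal}), and only then specializes to the discrete case with $\cG_{A}=\{A\}$, using the spectral sequence of \Cref{prop:homology_ss} to translate $L$-completeness into $\Lambda^I$-locality. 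You instead stay entirely in $\D(A)$: you take as input the classical criterion that derived completeness is detected by $\RHom_A(A[1/x_i],M)\simeq 0$ for all $i$, and transfer between that family and the family $Q_i = x_i^{-1}A/(x_1,\dots,x_{i-1})$ via Koszul self-duality (so $\RHom_A(Q_i,M)$ is a shift of $\RHom_A(A[1/x_i],M)\otimes^{\mathbf{L}}_A A/(x_1,\dots,x_{i-1})$) and an \emph{upward} induction whose engine is derived Nakayama; as you say, the inductive hypothesis is exactly what makes $W=\RHom_A(A[1/x_i],M)$ derived $(x_1,\dots,x_{i-1})$-complete, so that the Koszul vanishing has content. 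What each approach buys: the paper's argument yields the comodule-level statement \Cref{cor:Lamdalocal}, which is of independent use and of which the theorem is a one-line corollary; yours is more elementary and self-contained (no stable category of comodules, no generation lemma, no smashing localization), and it isolates a transparent mechanism --- completeness in the previous variables plus Koszul vanishing in the next variable, one variable at a time --- explaining why the ``partial'' localizations $Q_i$ detect completeness. Two small points to tighten: your opening identification of $L$-completeness of a module with $\Lambda^I M \simeq M$ is not definitional but is exactly what the paper justifies by combining \Cref{prop:homology_ss} with \Cref{thm:gm_localhomology}, so it should carry that justification; and the localizations $A[1/x_{i_1}\cdots x_{i_k}]$ are filtered colimits of \emph{copies} (not shifts) of $A[1/x_{i_1}]$ along multiplication maps --- with that reading your $\ilim^1$ worry evaporates, since $\RHom$ out of such a colimit is a limit of zero objects.
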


In fact, this is a consequence of a more general result that characterizes those $M \in \Stable_{\Psi}$ for which $\Lambda^IM \simeq M$, see \Cref{cor:Lamdalocal}. 

In the final section, we turn to torsion and complete objects in derived categories of comodules. A priori, there are at least three different notions of what it could mean for an object $M \in \D(\Psi)$ to be torsion with respect to an ideal $I \subseteq A$: 
	\begin{enumerate}
		\item $M$ is in the smallest localizing ideal of $\D(\Psi)$ generated by $A/I$, denoted $\Dtors(\Psi)$. 
		\item $M$ is in the image of the canonical functor from the derived category of the abelian category of $I$-torsion  $\Psi$-comodules, denoted $\cD(\Comod_{\Psi}^{I-\mathrm{tors}})$.
		\item The homology groups $H_nM$ are $I$-torsion  $\Psi$-comodules for all $n \in \Z$. 
	\end{enumerate}
One gets analogous definitions for complete objects by replacing localizing with colocalizing and $I$-torsion with $I$-complete where appropriate. 

The goal of \Cref{sec:tilting} is to compare these notions and use this to prove a tilting-type equivalence between torsion and complete objects in $\D(\Psi)$. When working with comodules, the difficulties intrinsic to complete objects persist at the level of the derived category; while we can show that the three notions above coincide in the case of torsion objects, we can only conclude the same for complete objects when working over a discrete Hopf algebroid.

\begin{thmx}[\cref{thm:derder}]\label{thm:d}
Let $(A,\Psi)$ be an Adams Hopf algebroid and $I \subseteq A$ a finitely generated invariant ideal. 
	\begin{enumerate}
		\item Suppose $I$ is generated by a weakly proregular sequence. If $(A,\Psi)=(A,A)$ is discrete, then there is a canonical equivalence 
		between the right completion of $\cD^{-}(\LM)$ and $\Dcmpl(A)$. Moreover, an object $M \in \cD(A)$ is $I$-complete if and only if the homology groups $H_*M$ are $L$-complete. 
		\item \sloppy If $I$ is generated by a regular sequence, then there is a canonical equivalence $\cD(\Comod_{\Psi}^{I-\mathrm{tors}}) \simeq \Dtors(\Psi)$. Moreover, an object $M \in \D(\Psi)$ is $I$-torsion if and only if the homology groups $H_*M$ are $I$-torsion. 
	\end{enumerate}
\end{thmx}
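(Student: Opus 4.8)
The plan is to treat the two parts in parallel, since each asserts that a canonical functor out of the derived category of an abelian (sub)category of $\Comod_\Psi$ is an equivalence onto a (co)localizing subcategory of the ambient derived category, and each such statement splits into two essentially independent claims: a \emph{homology characterization} identifying the essential image, and a \emph{full faithfulness} statement. The organizing principle for the latter is the standard criterion that, for a fully faithful exact inclusion $\iota\colon \cB \hookrightarrow \cA$ of abelian categories, the induced functor on derived categories is fully faithful precisely when $\cB$ is closed under extensions in $\cA$ and the comparison maps $\Ext^n_{\cB}(X,Y) \to \Ext^n_{\cA}(X,Y)$ are isomorphisms for all $X,Y \in \cB$ and all $n \ge 0$. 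Thus in both parts the technical heart is an Ext-comparison, while the identification of the essential image is a formal consequence of the homology characterization, using the (co)localizing structure supplied by the local duality framework of \cite{bhv}.

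For part (2), the functor $\cD(\Comod_\Psi^{I-\mathrm{tors}}) \to \D(\Psi)$ is induced by the exact inclusion of $I$-torsion comodules into all comodules. I would first prove the homology characterization: the class of objects of $\D(\Psi)$ with $I$-torsion homology is a localizing subcategory closed under tensoring (hence a localizing ideal) and contains $A/I$, so it contains $\Dtors(\Psi)$; conversely, given $M$ with $I$-torsion homology, a Postnikov filtration reduces the claim $M \in \Dtors(\Psi)$ to the case of a single torsion comodule concentrated in one degree. Writing such a comodule as the filtered colimit of its $I^k$-torsion subcomodules and using that the regularity of the sequence makes $A/I^k$ lie in the thick subcategory generated by $A/I$, one concludes membership in $\Dtors(\Psi)$. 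With the homology characterization in hand, essential surjectivity onto $\Dtors(\Psi)$ follows because every object with torsion homology admits a resolution by torsion comodules. The remaining Ext-comparison is where I expect to spend the most effort: the strategy is to show that injective objects of $\Comod_\Psi^{I-\mathrm{tors}}$ are acyclic for $\Ext^{>0}_{\Comod_\Psi}(X,-)$ on torsion $X$, which for a regular sequence should follow from an explicit Koszul/local-cohomology model for the torsion functor and the identification of torsion-injectives with torsion parts of injective comodules, in parallel with the Noetherian module case.

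For part (1) one works over a discrete Hopf algebroid, so $\Stable_A \simeq \D(A)$ and $\LM$ denotes the Grothendieck abelian category of $L$-complete $A$-modules. The natural functor $\cD^-(\LM) \to \D(A)$ lands in $\Dcmpl(A)$ because each $L$-complete module, placed in a single degree, is $I$-complete and completeness is closed under the operations used in a bounded-above Postnikov filtration. To reach all of $\Dcmpl(A)$ one must pass to the right completion of $\cD^-(\LM)$: this is exactly the device that supplies the unbounded-below objects that $\cD^-$ omits, and matching it against $\Dcmpl(A)$ is a bookkeeping step once the bounded comparison is established. The homology characterization---$M$ is $I$-complete iff each $H_nM$ is $L$-complete---is proved by the same two inclusions: completeness passes to homology because $\LM$ is closed in $\Mod_A$ under kernels, cokernels, and extensions and the derived completion of a complex has $L$-complete homology under weak proregularity; conversely a module with $L$-complete homology is assembled from its (complete) homology modules by a convergent Postnikov tower, using that $\Dcmpl(A)$ is closed under the relevant limits. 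As before, full faithfulness reduces to the Ext-comparison $\Ext^n_{\LM}(X,Y) \cong \Ext^n_{\Mod_A}(X,Y)$, which here rests on the good homological behaviour of $L$-completion guaranteed by the weakly proregular hypothesis.

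The principal obstacle in both parts is the Ext-comparison underlying full faithfulness, and this is precisely where the asymmetry between the two statements originates. For torsion comodules the hereditary torsion theory is well enough behaved---even for a general Adams Hopf algebroid---that torsion-injectives can be controlled, whereas for complete objects the subtleties of comodule completion emphasized in \cref{sec:completion} obstruct any comparable control of $\LM$-injectives; it is only over a discrete Hopf algebroid, where $L$-completion reduces to the classical Greenlees--May theory \cite{gm_localhomology}, that the comparison can be carried through. This explains why part (1) is restricted to the discrete case while part (2) holds for all Adams Hopf algebroids, and it is also why the completion statement must be phrased via a right completion rather than a naive derived equivalence.
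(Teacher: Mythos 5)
Your bounded comparisons are essentially the paper's: the identification of projectives in $\LM$ with retracts of $L_0F$ and the resulting $\Ext$-vanishing is how the paper proves \Cref{prop:ffcomp}, and your plan to control torsion-injectives as torsion parts $\Psi\ctimes T^A_I(Q)$ of injective comodules is exactly the argument of \Cref{prop:ff}. The genuine gap is in how you pass from bounded to unbounded complexes, and it occurs in both parts. Your tool for the homology characterizations is a Postnikov-type assembly of a complex from its homology, but such assemblies use colimits in one direction and limits in the other, while the two subcategories in play each tolerate only one of these: $\Dtors(\Psi)$ is a localizing subcategory (closed under colimits, not limits), and $\Dcmpl(A)$ is the image of a right adjoint (closed under limits, not colimits). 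Concretely, in part (2) your reduction ``to a single torsion comodule concentrated in one degree'' works for $M$ with homology bounded in the colimit-accessible direction, since $M\simeq\colim_a\tau_{\ge -a}M$ and each $\tau_{\ge -a}M$ is then a finite extension of shifted homology comodules; but when the homology is unbounded in the other direction the finite-stage truncations $\tau_{\le j}M$ only recover $M$ as a \emph{limit}, which need not stay in $\Dtors(\Psi)$. Dually, in part (1) your claim that a complex with $L$-complete homology is ``assembled \ldots using that $\Dcmpl(A)$ is closed under the relevant limits'' fails for homology unbounded below: $\tau_{\le n}M$ is built from its (complete) homology objects only as a \emph{colimit} $\colim_m\tau_{\ge -m}\tau_{\le n}M$, and $\Dcmpl(A)$ is not closed under colimits. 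The paper avoids towers entirely: both directions of both homology characterizations are proved with the strongly convergent local (co)homology spectral sequences (\Cref{prop:homology_ss} and \Cref{lem:sscomodules}), whose horizontal vanishing line (coming from the finite generation of $I$) is precisely what makes the argument insensitive to boundedness.

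This is not a repairable bookkeeping issue in part (2), because the needed spectral sequence is not known to exist in $\cD(\Psi)$ at all --- the paper says so explicitly --- which is why its proof detours through $\Stable_\Psi$: one embeds $X$ via $\iota_*$, applies \Cref{lem:sscomodules} there to see that $\Gamma^{\Psi}_I\iota_*X\to\iota_*X$ is a homology isomorphism, and returns via $\omega$ and \Cref{lem:localomega} (this is \Cref{prop:cohomequivalence}); the unbounded equivalence $\cD(\Comod_{\Psi}^{I-\mathrm{tors}})\simeq\Dtors(\Psi)$ is then obtained by first proving the stable analogue $\Stable(\Comod_{\Psi}^{I-\mathrm{tors}})\simeq\Stable_{\Psi}^{I-\mathrm{tors}}$ (\Cref{prop:stabletor}) and inverting quasi-isomorphisms, which requires a further dualizability argument showing $\omega D(A/I^k)\in\Loc^{\otimes}_{\cD(\Psi)}(A/I)$. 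Your substitute for all of this --- ``essential surjectivity \ldots follows because every object with torsion homology admits a resolution by torsion comodules'' --- is unjustified for unbounded complexes; it is essentially the statement being proved. Two smaller points: your reduction in part (2) still needs the observation that a comodule killed by $I^k$ is a retract of $A/I^k\otimes N$ in $\cD(\Psi)$ (being a module over the algebra $A/I^k$), not merely that $A/I^k\in\Thick(A/I)$; and $\LM$ is \emph{not} Grothendieck abelian (the paper stresses that filtered colimits in $\LM$ are not exact), which is the very reason only $\cD^-(\LM)$ and its right completion are available, so your framing of part (1) should not presume otherwise.
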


There are a number of results in the literature closely related to \Cref{thm:d}. For example, in \cite[Cor.~3.32]{psy} it is proven that $M \in \cD(A)$ is cohomologically $I$-torsion (that is, the canonical morphism from $\mathbf{R}\Gamma_IM \to M$ is an equivalence, where $\mathbf{R}\Gamma_IM$ denotes the total derived functor of $I$-torsion of $M$) if and only if the homology groups $H_*M$ are $I$-torsion. Moreover, in unpublished work Rezk has constructed a version of the derived category of $L$-complete modules and has proven a version of the second part of \Cref{thm:d}(1), see \cite[Thm.~9.2]{rezk_analytic_2013}. 

Recall that the abelian categories of torsion and $L$-complete modules are not equivalent in general; for example, the former is Grothendieck while the latter is not. On the other hand, the subcategories $\Dcmpl(A)$ and $\Dtors(A)$ are known to be equivalent, see \cite[Thm.~3.11]{bhv} for example. Consequently, the latter together with the previous theorem allow us to deduce the following tilting-theoretic interpretation of local duality for commutative rings. 

\begin{corx}
For any commutative ring $A$ and $I \subseteq A$ a finitely generated ideal, local homology and local cohomology induce mutual inverse symmetric monoidal equivalences
\[
\xymatrix{\Lambda^I\colon \cD(\Mod_{A}^{I-\mathrm{tors}}) \ar@<0.5ex>[r]^-{\sim} & \cD(\LM)\noloc \Gamma_I \ar@<0.5ex>[l]^-{\sim}}
\]
where $ \cD(\LM)$ denotes the right completion of $\cD^{-}(\LM)$. 
\end{corx}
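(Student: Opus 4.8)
The statement follows formally by transporting the abstract local duality equivalence through the two identifications provided by \Cref{thm:d}. The plan is the following. First, recall from \cite[Thm.~3.11]{bhv} that for a commutative ring $A$ and a finitely generated ideal $I$, local homology and local cohomology restrict to mutually inverse symmetric monoidal equivalences between the localizing tensor-ideal $\Dtors(A)$ of $I$-torsion objects and the colocalizing subcategory $\Dcmpl(A)$ of $I$-complete objects of $\cD(A)$; here $\Gamma_I \colon \Dcmpl(A) \to \Dtors(A)$ and $\Lambda^I \colon \Dtors(A) \to \Dcmpl(A)$, and the relevant symmetric monoidal structures are the one restricted from $\cD(A)$ on the torsion side and its completed analogue $\Lambda^I(- \otimes_A^{\mathbf{L}} -)$ on the complete side.

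Second, I would use \Cref{thm:d} to replace these two subcategories of $\cD(A)$ by the derived categories of the relevant abelian categories. Applying part (2) to the discrete Hopf algebroid $(A,A)$, for which $\Comod_A = \Mod_A$, gives a canonical equivalence $\cD(\Mod_A^{I-\mathrm{tors}}) \simeq \Dtors(A)$, and part (1) gives a canonical equivalence between $\cD(\LM)$ (the right completion of $\cD^{-}(\LM)$) and $\Dcmpl(A)$. Composing the three equivalences then yields the desired equivalence $\cD(\Mod_A^{I-\mathrm{tors}}) \simeq \cD(\LM)$, with $\Lambda^I$ and $\Gamma_I$ as the two functors; that they are mutually inverse is inherited from the local duality equivalence, and that local homology lands in the complete derived category (and local cohomology in the torsion one) is exactly the content of the identifications just made.

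Two points require care, and I expect the second to be the main obstacle. The regularity hypotheses in \Cref{thm:d} (a regular sequence in part (2), a weakly proregular one in part (1)) are stronger than merely requiring $I$ to be finitely generated, so one first reduces to this case: since the torsion and complete subcategories, and the functors $\Gamma_I$ and $\Lambda^I$, depend only on the closed subset $V(I) \subseteq \Spec A$ and are computed by the stable Koszul / \v{C}ech complexes on the generators, one may work with the regular sequence obtained by base change along $\Z[t_1,\dots,t_n] \to A$, $t_i \mapsto x_i$, where the $t_i$ form a regular sequence. The more delicate point is symmetric monoidality: \Cref{thm:d} produces equivalences of triangulated categories, and one must verify that they are in fact symmetric monoidal for the tensor structures at hand -- the (derived) torsion tensor product on $\cD(\Mod_A^{I-\mathrm{tors}})$ and the $L$-completed tensor product on $\cD(\LM)$ -- so that the known monoidality of the \cite{bhv} equivalence transports along them. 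Once this compatibility of monoidal structures is in place, the corollary follows by composition.
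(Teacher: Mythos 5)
Your proposal is correct and is essentially the paper's own argument: the corollary is deduced there by composing the symmetric monoidal local duality equivalence $\Dtors(A) \simeq \Dcmpl(A)$ of \cite[Thm.~3.11]{bhv} with the two identifications supplied by \Cref{thm:derder}, exactly as you describe. The two caveats you flag --- the gap between ``finitely generated'' and the weakly proregular/regular hypotheses of \Cref{thm:derder}, and the monoidality of those identifications --- are not treated in any more detail by the paper itself, which only remarks that in the discrete case the conditions on $I$ can be weakened by constructing $\Gamma_I$ and $\Lambda^I$ from Koszul complexes; so your points of care go beyond, rather than diverge from, the paper's one-line deduction.
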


Once again, in the case of a general Hopf algebroid we were unable to obtain such a result. Indeed, there seems to be no good candidate for a derived category fitting the right hand side in the equivalence above.

Finally, let us say a few words about the geometric interpretation of this work. The equivalence between the categories of commutative rings and affine schemes extends to an equivalence between the categories of Hopf algebroids and certain algebraic stacks \cite{naumann_stack_2007}. Prominent examples of these stacks in homotopy theory are the moduli stack of $1$-dimensional formal groups and its Lubin--Tate substacks. Let $\fX$ be the stack presented by an Adams Hopf algebroid $(A,\Psi)$. We will write $\QCoh(\fX)$ for the category of quasi-coherent sheaves over $\fX$. Let $\cI$ be the ideal sheaf corresponding to an invariant ideal $I\subseteq A$. This determines a closed substack $\fZ$ of $\fX$, along which we can consider both completion and torsion functors on $\QCoh({\fX})$ together with their corresponding derived functors. In light of the symmetric monoidal equivalence between $\Comod_\Psi$ and $\QCoh({\fX})$ \cite[Prop. 5.37]{bhv}, and the definition of $\fZ$ in terms of $I$, we see that the corresponding subcategories of complete objects, resp. torsion objects, are equivalent as well.

On the other hand, the category of quasi-coherent sheaves over $\fX$ is a Grothendieck abelian category, and so we can consider its derived category $\D(\fX)=\D(\QCoh(\fX))$. Following \Cref{defn:Dtors}, there is a subcategory of $\D(\fX)$ of $\cI$-torsion objects, as well as a subcategory of $\cI$-complete objects in $\D(\Spec(A))$ in the case of a discrete Hopf algebroid. The equivalence between $\Comod_\Psi$ and $\QCoh({\fX})$ yields a symmetric monoidal equivalence between their corresponding derived categories that restricts to equivalences of subcategories of complete objects, resp. torsion objects, at the derived level. We can therefore translate all the results in this paper to the context of stacks and their abelian and derived categories of quasi-coherent sheaves. Although for the sake of conciseness we keep the exposition in the language of comodules over Hopf algebroids, our results should be of independent interest in the aforementioned geometric setting. We end this section with a dictionary between the algebraic and geometric contexts.

\begin{table}[h]
	\begin{tabular}{|l|l|}
		\hline
		Commutative algebra & Algebraic geometry \\ \hline
	 Hopf algebroid map $(A,A)\to(A,\Psi)$ & Stack presentation $\fX\to \Spec(A)$ \\ 
	 Invariant ideal $I\subseteq A$& Closed substack $\fZ\subseteq \fX$ \\ 
	 $\Comod_\Psi$, $\D(\Psi)$ & $\QCoh(\fX)$, $\D(\fX)$ \\ 
	 $\Stable_\Psi=\Ind(\Thick(\cG_\Psi))$ & $\IndCoh_{\fX}=\Ind(\Thick(\cG_{\fX}))$  \\ \hline
	\end{tabular}
	\end{table}

\subsection*{Conventions}

	We always assume that our Hopf algebroids are flat. Moreover, we will write $\boxtimes$ for the underived tensor product of comodules and $\otimes$ for the derived tensor product. We will denote the internal hom-object in a category by $\iHom$. For a cocomplete category $\cC$, we let $\cC^{\omega}$ denote the full subcategory of compact objects in $\cC$.

	We work with $\infty$-categories throughout this document, specifically the quasi-categories of Lurie and Joyal \cite{htt,ha}. Unless otherwise noted, all functors between stable $\infty$-categories are assumed to be exact and all subcategories of stable $\infty$-categories are assumed to be stable subcategories. We follow the convention of \cite{htt} and say that a functor between presentable stable $\infty$-categories is continuous if it preserves filtered colimits.  Given a collection of objects $\cS$ in an $\infty$-category $\cC$, we denote by $\Thick(\cS)$ the smallest thick subcategory of $\cC$ containing $\cS$. Likewise, we write $\Loc(\cS)$, resp. $\Locid~(\cS)$, for the smallest localizing subcategory, resp. localizing tensor ideal, containing $\cS$.

\subsection*{Acknowledgments}
We would like to thank Andrew Blumberg, Mark Hovey, Henning Krause, and Hal Sadofsky for helpful discussions related to this work, Amnon Yekutieli for helpful comments on an earlier version of this paper, and the referee for many useful suggestions. 

A preliminary version of the results in the first two sections of the present paper was previously contained in the author's joint work \cite{bhv}, while the equivalence between $\Dcmpl(A)$ and a suitable derived category of complete modules was also considered (via different methods) in the third author's PhD thesis \cite{gab_phd}. 

The first author was supported by the Danish National Research Foundation Grant DNRF92 and the European Unions Horizon 2020 research and innovation programme under the Marie Sklodowska-Curie grant agreement No. 751794, and acknowledges the hospitality of the Newton Institute in Cambridge, UK, where part of this work was carried out. The second author thanks Haifa University for its hospitality. The third author would like to thank the Max Planck Institute for Mathematics for its hospitality.

\section{Completion for comodules}\label{sec:completion}
In this section we study the completion functor for comodules. As we shall see, this differs from the $A$-module completion functor, as in general the forgetful functor to $A$-modules does not preserve limits. Nonetheless, under suitable conditions we show that the comodule completion functor is the composite of the $A$-module completion functor and a functor $\iota$ that is defined by a certain pullback diagram, which, informally speaking, extracts the largest possible subcomodule of the $A$-module completion functor.


\subsection{Limits}
Let $(A,\Psi)$ be a Hopf algebroid. For an overview on the theory of comodules over a Hopf algebroid we refer the reader to \cite[Appendix A1]{ravenel_86}. Let $\epsilon_* \colon \Comod_{\Psi} \to \Mod_A$ be the forgetful functor from comodules to modules; our notation indicates that this is in fact the functor induced by the map of Hopf algebroids $\epsilon \colon (A,\Psi)\to(A,A)$ which is the identity on $A$ and the counit on $\Psi$. Since $\epsilon_*$ does not preserve arbitrary limits (indeed, it does not even preserve products), the existence of limits in the category of comodules is not immediate. Hovey has shown that the category of comodules is complete \cite[Prop.~1.2.2]{hovey_htptheory}, by constructing the product of a system of comodules. Following his argument, we explain briefly how to construct the inverse limit of a system of comodules. The first step is to define inverse limits for extended comodules, where an adjointness argument shows that, for an inverse system $(N_k)$ of $A$-modules, we must have
\begin{equation}\label{eq:extendedpsilim}
\psilim(\Psi \ctimes N_k) \cong \Psi \ctimes \lim_k N_k,
\end{equation}
where we write $\psilim(-)$ for the limit in $\Comod_{\Psi}$. One can then construct $\psilim(f)$, where $f$ is a map of extended comodules. For a general inverse system $(M_k)$ of comodules, there are exact sequences of comodules
\[
\xymatrix{0 \ar[r] & M_k \ar[r] & \Psi \ctimes M_k \ar[r]^{f_k} & \Psi \ctimes T_k,}
\]
where $T_k$ is the cokernel of the coaction map of $M_k$. This enables us to construct the inverse limit of $(M_k)$ as $\psilim(M_k) = \ker(\psilim(f_k))$, see~\cite[Sec.~4.1]{bhv} for details. 
\begin{lem}\label{lem:limcomoduleamod}
	Let $(M_k)$ be an inverse system of comodules, then the natural morphism of $A$-modules
	\[\xymatrix{
\tau\colon\epsilon_* \psilim(M_k) \ar[r]& \lim_k(\epsilon_*M_k),}
	\]
is an injection when $\Psi$ is a projective $A$-module. 
\end{lem}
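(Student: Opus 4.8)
The plan is to reduce injectivity of $\tau$ to the injectivity of a single comparison map on extended comodules, where the hypothesis that $\Psi$ is projective can be used directly. First I would recall, from the construction of $\psilim$ explained above, that writing $f_k\colon \Psi \ctimes M_k \to \Psi \ctimes T_k$ for the maps built from the coaction cokernels $T_k$, one has $\psilim(M_k) = \ker(\psilim(f_k))$, and that by \eqref{eq:extendedpsilim} the extended limits are $\psilim(\Psi \ctimes M_k) \cong \Psi \ctimes \lim_k M_k$ and $\psilim(\Psi \ctimes T_k) \cong \Psi \ctimes \lim_k T_k$, the inner limits being formed in $\Mod_A$. Since $\Psi$ is flat, kernels of comodule maps are computed on underlying $A$-modules, so $\epsilon_*$ is left exact and
\[
\epsilon_* \psilim(M_k) \cong \ker\bigl( \epsilon_*(\Psi \ctimes \textstyle\lim_k M_k) \xrightarrow{\ g\ } \epsilon_*(\Psi \ctimes \lim_k T_k) \bigr),
\]
where $g = \epsilon_* \psilim(f_k)$.

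Next I would compare this with the $A$-module limit. Applying the left-exact functor $\lim_k$ to the defining exact sequences $0 \to M_k \to \Psi \ctimes M_k \xrightarrow{f_k} \Psi \ctimes T_k$ identifies $\lim_k \epsilon_* M_k$ with $\ker(h)$, where $h = \lim_k f_k \colon \lim_k \epsilon_*(\Psi \ctimes M_k) \to \lim_k \epsilon_*(\Psi \ctimes T_k)$. The canonical comparison maps $\alpha\colon \epsilon_*(\Psi \ctimes \lim_k M_k) \to \lim_k \epsilon_*(\Psi \ctimes M_k)$ and its analogue $\beta$ for the system $(T_k)$ then fit into a commutative square whose horizontal arrows are $g$ and $h$; unwinding the construction of the comodule limit shows that $\tau$ is precisely the map induced by this square on the horizontal kernels, i.e.\ $\tau$ is the restriction of $\alpha$ to $\ker(g) = \epsilon_* \psilim(M_k)$. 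Consequently it suffices to prove that the single map $\alpha$ is injective.

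Finally I would establish injectivity of $\alpha$ using projectivity. More generally, for an $A$-module $P$ let $\alpha^{(P)}\colon P \otimes_A \lim_k(\epsilon_* M_k) \to \lim_k(P \otimes_A \epsilon_* M_k)$ denote the canonical comparison map, so that $\alpha = \alpha^{(\Psi)}$; this is natural and additive in $P$, and compatible with finite direct sums in $P$ since $\lim_k$ preserves finite products. For $P = A^{(S)}$ free it is the evident inclusion $\bigl(\lim_k \epsilon_* M_k\bigr)^{(S)} \hookrightarrow \lim_k\bigl((\epsilon_* M_k)^{(S)}\bigr)$, because a finitely supported tuple that maps to the zero compatible system vanishes in every coordinate and is therefore zero; thus $\alpha^{(A^{(S)})}$ is injective. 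Writing the projective module $\Psi$ as a direct summand of a free module $F = A^{(S)}$, additivity of $\alpha^{(-)}$ exhibits $\alpha^{(\Psi)}$ as a retract of the injective map $\alpha^{(F)} = \alpha^{(\Psi)} \oplus \alpha^{(Q)}$, whence $\alpha = \alpha^{(\Psi)}$ is injective, completing the argument.

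The genuinely delicate step is the middle one: verifying that the comparison square commutes and that the induced map on kernels is exactly $\tau$, rather than merely some map fitting the same diagram. This is bookkeeping with the explicit construction of $\psilim$ from \cite{bhv}, and is where one must be careful to match the comodule projections $\psilim(M_k) \to M_j$ with the module-level data. By contrast, the free-module computation and the retract argument in the last paragraph are elementary, and they are the only place the projectivity hypothesis on $\Psi$ is actually used.
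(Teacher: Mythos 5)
Your proof is correct and takes essentially the same route as the paper's: both identify $\epsilon_*\psilim(M_k)$ as the kernel of $\Psi \ctimes \lim_k M_k \to \Psi \ctimes \lim_k T_k$, identify $\lim_k \epsilon_* M_k$ as the corresponding kernel of limits of extended comodules, and reduce injectivity of $\tau$ to injectivity of the canonical comparison map $\Psi \ctimes \lim_k M_k \to \lim_k(\Psi \ctimes M_k)$, established from the free case together with a retract argument using projectivity of $\Psi$. The only cosmetic difference is that the paper checks the comparison-map injectivity for products and then realizes the inverse limit as a kernel of a map between products, whereas you verify the free case directly for limits; both come down to the same summand argument.
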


\begin{proof}
From the discussion above we know that $\psilim(M_k)$ is given by the kernel 
	\[
\xymatrix{0 \ar[r] & \psilim(M_k) \ar[r] & \Psi \ctimes \lim_k(M_k) \ar[r] & \Psi \ctimes \lim_k (T_k),}
	\]
where $T_k$ is the cokernel of the coaction map of $M_k$. We then have a commutative diagram of $A$-modules (where we omit writing $\epsilon_*$ for simplicity)
	\[
\xymatrix@C=3em{
0 \ar[r] & \psilim(M_k) \ar@{-->}[d]_\tau \ar[r] & \Psi \ctimes \lim_k(M_k) \ar[d] \ar[r] & \ar[d] \Psi \ctimes \lim_k(T_k) \\ 
0 \ar[r] & \lim_k(M_k) \ar[r]_-{\lim_k(\psi_{M_k})} & \lim_k(\Psi \ctimes M_k) \ar[r] & \lim_k(\Psi \ctimes T_k),
}
	\]
which induces the natural map $\tau$. This is an injection provided the two right hand vertical arrows are, which we claim is true when $\Psi$ is a projective $A$-module. Indeed, it is not hard to check that the corresponding statement is true for products, and we can then write the inverse limit as a kernel of maps between the product to deduce the claimed result.  
\end{proof}

The next result shows that, under certain conditions, the inverse limit in comodules can be determined by first taking the inverse limit of the underlying modules and then extracting a subcomodule using a pullback.

\begin{prop}\label{prop:comlimaspb}
Suppose $(M_k)$ is an inverse system of comodules such that the canonical maps
\[
\xymatrix{\Psi \ctimes \lim_k(M_k) \ar[r] & \lim_k(\Psi \ctimes M_k) & & \Psi \ctimes \lim_k(\Psi \ctimes M_k) \ar[r] & \lim_k(\Psi \ctimes \Psi \ctimes M_k)}
\]
are monomorphisms, then the inverse limit in comodules, $\psilim(M_k)$, can be computed by the following pullback of $A$-modules:
\begin{equation}\label{eq:pullback2}
\xymatrix@C=4em{
\psilim(M_k) \ar[r]^-{p} \ar[d]_{\tau} & \Psi \ctimes \lim_k(M_k) \ar[d]^{j} \\
\lim_k(M_k) \ar[r]_-{\lim_k(\psi_{M_k})} & \lim_k (\Psi \ctimes M_k).
}
\end{equation}
\end{prop}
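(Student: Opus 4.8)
The plan is to exhibit $\psilim(M_k)$ as the kernel of the cobar differential computed at the level of extended-comodule limits, and then to recognise the square \eqref{eq:pullback2} as the left-hand square in a morphism of left-exact sequences whose right-hand vertical arrow is the monomorphism furnished by the second hypothesis. The pullback property will then follow from a standard diagram chase.

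First I would use the cobar presentation of each comodule: writing $\Delta$ for the comultiplication of $\Psi$, the coaction $\psi_{M_k}$ exhibits $M_k$ as the equalizer of the two coface maps
\[
d^0_k = \Delta \ctimes \mathrm{id}_{M_k}, \qquad d^1_k = \mathrm{id}_\Psi \ctimes \psi_{M_k} \colon \Psi \ctimes M_k \longrightarrow \Psi \ctimes \Psi \ctimes M_k,
\]
naturally in $k$. Since limits commute with limits, $\psilim$ preserves this equalizer, and combining with the formula \eqref{eq:extendedpsilim} for limits of extended comodules identifies $\psilim(M_k)$ with the equalizer of $D^0, D^1 \colon \Psi \ctimes \lim_k(M_k) \to \Psi \ctimes \lim_k(\Psi \ctimes M_k)$, where $D^i = \psilim(d^i_k)$; applying the exact forgetful functor $\epsilon_*$, the underlying $A$-module of $\psilim(M_k)$ is the kernel of $D^0 - D^1$ in $\Mod_A$, and its inclusion is the map $p$. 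The comparison morphism $\tau$ of \Cref{lem:limcomoduleamod} is natural in the inverse system, so applying it to the maps $d^i_k$ gives the relations $j' \circ D^i = \lim_k(d^i_k) \circ j$, where $j$ and $j'$ are precisely the two comparison maps appearing in the hypotheses.

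These data assemble into a commutative diagram of $A$-modules with left-exact rows
\[
\xymatrix@C=2.6em{
0 \ar[r] & \psilim(M_k) \ar[d]_-{\tau} \ar[r]^-{p} & \Psi \ctimes \lim_k(M_k) \ar[d]^-{j} \ar[r]^-{D^0 - D^1} & \Psi \ctimes \lim_k(\Psi \ctimes M_k) \ar[d]^-{j'} \\
0 \ar[r] & \lim_k(M_k) \ar[r]^-{\lim_k(\psi_{M_k})} & \lim_k(\Psi \ctimes M_k) \ar[r]^-{\lim_k(d^0_k - d^1_k)} & \lim_k(\Psi \ctimes \Psi \ctimes M_k),
}
\]
in which the bottom row arises by applying the left-exact functor $\lim_k$ to the cobar sequences; its exactness at $\lim_k(\Psi \ctimes M_k)$ identifies $\ker(\lim_k(d^0_k - d^1_k))$ with the image of the monomorphism $\lim_k(\psi_{M_k})$. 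The left square commutes by the naturality of $\tau$ already exploited in \Cref{lem:limcomoduleamod}, and the right square commutes by the relation above. I would then invoke the elementary fact that, in such a morphism of left-exact sequences, the left square is cartesian once the right-hand vertical $j'$ is a monomorphism: given $x$ with $j(x)$ in the image of $\lim_k(\psi_{M_k})$, the relation $j'((D^0 - D^1)(x)) = \lim_k(d^0_k - d^1_k)(j(x)) = 0$ together with injectivity of $j'$ forces $(D^0 - D^1)(x) = 0$, so that $x$ lifts uniquely along $p$; conversely any element of $\psilim(M_k)$ maps into the image of $\lim_k(\psi_{M_k})$ by commutativity. This is exactly the content of the second monomorphism hypothesis, and it shows that $\psilim(M_k)$ equipped with $p$ and $\tau$ solves the universal problem posed by \eqref{eq:pullback2}. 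The first hypothesis that $j$ is a monomorphism then ensures, via the left square and the injectivity of $p$ and $\lim_k(\psi_{M_k})$, that $\tau$ is itself injective, as in \Cref{lem:limcomoduleamod}.

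The step I expect to be the main obstacle is precisely this passage from the comodule-level kernel defining $\psilim(M_k)$ to a computation carried out inside the underlying module limits. A priori the forgetful functor cannot distinguish the extended-comodule limit $\Psi \ctimes \lim_k(\Psi \ctimes M_k)$ from the honest module limit $\lim_k(\Psi \ctimes \Psi \ctimes M_k)$, and it is only the hypothesis that the comparison $j'$ be a monomorphism that lets one detect the vanishing of $(D^0 - D^1)(x)$ after applying $j'$, thereby reducing the entire statement to the left-exactness of the module-level inverse limit.
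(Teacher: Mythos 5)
Your proof is correct, but it runs in the opposite direction from the paper's. The paper defines $P$ to be the pullback of the span in \eqref{eq:pullback2}, endows $P$ with a $\Psi$-comodule structure (a verification it omits, and where the monomorphism hypotheses enter), and then checks directly that $P$ satisfies the universal property of the inverse limit in $\Comod_{\Psi}$: a compatible family of comodule maps $f_k\colon N \to M_k$ induces a module map $N \to \lim_k(M_k)$ and, via the coaction of $N$, a map $N \to \Psi \ctimes \lim_k(M_k)$, whence a unique comodule map $N \to P$. You instead start from the already-constructed $\psilim(M_k)$ --- identified as the kernel of the cobar differential using \eqref{eq:extendedpsilim} and the commutation of limits with limits --- and prove that the comparison square is cartesian by a chase through a morphism of left exact sequences. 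Your route has two genuine advantages: it never needs to construct a comodule structure on the pullback (the structure already lives on $\psilim(M_k)$), and it isolates exactly where the hypotheses act --- your chase uses only the second monomorphism $j'$, with the first ($j$) serving merely to recover injectivity of $\tau$ as in \cref{lem:limcomoduleamod}, which is a mild sharpening of the statement. The paper's route, by contrast, amounts to an independent reconstruction of the comodule limit as a pullback, without appeal to the kernel presentation, at the cost of the deferred comodule-structure verification. Note also that your choice of the cobar presentation $0 \to M_k \to \Psi \ctimes M_k \to \Psi \ctimes \Psi \ctimes M_k$, rather than the paper's presentation via the cokernel $T_k$ of the coaction, is precisely what makes your middle and right-hand comparison maps coincide with the two monomorphisms hypothesized in the proposition; with the $T_k$ presentation the second hypothesis would not match.
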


\begin{proof}
	Let $P$ be the pullback of the span part of~\eqref{eq:pullback2}. We will omit the standard verification that $P$ naturally admits the structure of an $\Psi$-comodule. Thus, it remains to show that $P$ satisfies the universal property of the limit. So, suppose we have a comodule $N$, along with compatible comodule morphisms $f_k\colon N \to M_k$ for all $k$. By the universal property of the inverse limit in $A$-modules, we obtain an $A$-module morphism $f\colon N \to \lim_k(M_k)$, and a diagram:
\[
\xymatrix@C=4em{
N \ar@/_2pc/[ddr]_-f \ar@/^2pc/^-{(1 \ctimes f)\psi_N}[drr] \ar@{-->}[rd]^g\\
&P \ar[r]^-{p} \ar[d]_i & \Psi \ctimes \lim_k(M_k) \ar[d]^j \\
&\lim_k(M_k) \ar[r]_-{\lim_k(\psi_{M_k})} & \lim_k(\Psi \ctimes M_k).
}
\]
One can check that this diagram commutes, and so we obtain a (unique) morphism $g\colon N \to P$, which can be shown to be a morphism of comodules. The morphism $\pi_i\colon P \to M_k$ is the composite of $i\colon P \to \lim_k(M_k)$ and the $A$-module projection maps; once again, these can be checked to be comodule morphisms making the required diagrams commute. 
\end{proof}
	
\begin{rem}
	Hovey realized that, under suitable conditions, the comodule product can be defined as the largest possible subcomodule of the $A$-module product; see the remark after Proposition 1.2.2 of~\cite{hovey_htptheory}. An alternative proof (under slightly more general conditions) is given in the thesis of Sitte~\cite[Lem.~3.5.12]{sitte2014local}, and our approach follows his closely. Similar ideas are contained in unpublished work of Sadofsky.
\end{rem}

\subsection{Completion}

Let $(A,\Psi)$ be a Hopf algebroid and fix a finitely generated invariant ideal $I \subseteq A$. Undecorated notation will usually refer to the module-theoretic as opposed to comodule-theoretic constructions. 

\begin{defn}
The $I$-adic completion $C^I_\Psi(M)$ of a $\Psi$-comodule $M$ is defined as $C^I_\Psi(M) = \psilim(M \ctimes A/I^k)$. Note that $C_{A}^I(\epsilon_* M) = \lim_k(\epsilon_*M \ctimes A/I^k)$ is the usual module-theoretic completion of $M$. For the sake of simplified notation, we will often write $C^I(M)$ for the latter or just $\wM$ when the ideal is clear from the context.
\end{defn}

As an application of \Cref{lem:limcomoduleamod}, we obtain the following coarse comparison between the two notions of completion.

\begin{lem}\label{lem:comoduleamod}
For $M \in \Comod_{\Psi}$ there is a natural morphism of $A$-modules
\[
\xymatrix{\tau\colon\epsilon_* C^I_\Psi(M) \ar[r] & C^I(M).} 
\]
This is an injection when $\Psi$ is a projective $A$-module. 
\end{lem}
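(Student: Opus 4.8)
The plan is to apply \Cref{lem:limcomoduleamod} directly to the specific inverse system $(M_k) = (M \ctimes A/I^k)$, where the transition maps are induced by the quotient maps $A/I^{k+1} \to A/I^k$. First I would observe that each $M \ctimes A/I^k$ is a $\Psi$-comodule via the coaction on the left tensor factor, so that $(M \ctimes A/I^k)_k$ is genuinely an inverse system in $\Comod_{\Psi}$. By the definition of $C^I_\Psi(M)$ this system has comodule limit $\psilim(M \ctimes A/I^k) = C^I_\Psi(M)$, while the underlying $A$-module limit is $\lim_k(\epsilon_* M \ctimes A/I^k) = C^I(M)$ by the definition of the module-theoretic completion. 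Thus the natural morphism $\tau$ of \Cref{lem:limcomoduleamod} specializes to exactly the map $\tau \colon \epsilon_* C^I_\Psi(M) \to C^I(M)$ asserted in the statement.

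Having identified the map, the injectivity claim is then immediate: \Cref{lem:limcomoduleamod} guarantees that $\tau$ is an injection whenever $\Psi$ is a projective $A$-module, and this is precisely the hypothesis under which we wish to conclude. So the second sentence of the lemma follows with no further work beyond recognizing the instance of the general comparison.

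The only genuine point requiring care—and hence the main obstacle, though a mild one—is checking that the naturality assertion holds: that $\tau$ is natural in $M$ as a morphism of $A$-modules. For this I would note that a comodule map $M \to M'$ induces compatible comodule maps $M \ctimes A/I^k \to M' \ctimes A/I^k$ for every $k$, and hence a morphism of inverse systems; naturality of the comparison map $\tau$ in \Cref{lem:limcomoduleamod} with respect to morphisms of inverse systems then yields the required naturality in $M$. One should also confirm that these induced maps respect the transition maps of the two systems, which is routine since both the coaction and the projections $A/I^{k+1} \to A/I^k$ are functorial. With this verified, the proof is complete.
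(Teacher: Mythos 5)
Your proof is correct and is exactly the paper's own argument: the paper states \cref{lem:comoduleamod} as an immediate application of \cref{lem:limcomoduleamod} to the inverse system $(M_k) = (M \ctimes A/I^k)$, which is precisely your specialization. The only minor imprecision is that the comodule structure on $M \ctimes A/I^k$ is the tensor-product comodule structure (using that $I$ is invariant, so $A/I^k$ is a comodule), not literally ``the coaction on the left tensor factor,'' but this does not affect the argument.
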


If $(A,\Psi)$ is discrete, then $C_{\Psi}^I = C_{A}^I$, but they differ in general. In contrast to $C^I_\Psi$, the functor $C^I$ does not in general take values in the category of comodules again, because the completed coaction map takes values in a completed tensor product. In \cite{dev_cor}, Devinatz introduced a category of complete comodules over a complete Hopf algebroid to address this issue. Given $M\in\Mod_A$ let us denote its $I$-adic completion by $\wM$; we write $\cctimes$ for the completed tensor product. Suppose now that $(A,\Psi)$ is a Hopf algebroid, then for any finitely generated invariant ideal $I$, the triple $(\wA,\wPsi,I\cdot\wA)$ is a complete Hopf algebroid. 

\begin{defn}
A (left) complete $\wPsi$-comodule $M$ is a complete $\wA$-module $M$ together with a left $\wA$-linear map $\wpsi=\wpsi_M\colon M \to \wPsi \cctimes_{\wA} M$ which is counitary and coassociative. A morphism of complete $\wPsi$-comodules is, as usual, a morphism of complete modules that commutes with the structure maps. We will write $\cComod_{\wPsi}$ for the category of complete $\wPsi$-comodules. \end{defn}

Inspired by \Cref{prop:comlimaspb}, we consider the functor $\iota\colon\cComod_{\wPsi} \to \Mod_A$ defined on $N \in \cComod_\wPsi$ by the pullback diagram
\[
\xymatrix{
\iota N \ar[r]^-{p} \ar[d]_{i} & \Psi \ctimes N \ar[d]^-{j} \\
N \ar[r]_-{\wpsi_N} & \wPsi \cctimes N.}
\]
Informally speaking, $\iota N$ extracts the largest subcomodule of $N$; however, it is not clear that the map $i\colon\iota N \to N$ is injective nor that $\iota N$ admits a natural $\Psi$-comodule structure. We therefore introduce a type of Hopf algebroid for which these problems do not arise. In the next subsection, we exhibit a sufficient criterion for verifying these conditions and provide an example. 

\begin{defn}\label{def:truelevel}
A Hopf algebroid $(A,\Psi)$ is called true-level (with respect to the fixed invariant ideal $I \subseteq A$) if, for any $M \in \cComod_{\wPsi}$,  the canonical maps
\[
\xymatrix{
\Psi \ctimes M \ar[r] & \Psi \cctimes M & \mathrm{and} & \Psi \ctimes (\Psi \cctimes M) \ar[r] & \Psi \cctimes \Psi \cctimes M}
\] 
are monomorphisms. 
\end{defn}

\begin{lem}\label{lem:iotalifting}
If $(A,\Psi)$ is true-level, then $\iota$ factors as $\cComod_{\wPsi} \to \Comod_{\Psi} \xrightarrow{\epsilon_*} \Mod_A$. Furthermore, if $N$ is a complete comodule, then the natural map $\iota N \to N$ is injective. 
\end{lem}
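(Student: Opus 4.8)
The plan is to derive both assertions from the two monomorphism conditions in \Cref{def:truelevel}, treating the injectivity of $i$ first and the comodule structure second.

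For injectivity, the first true-level condition (applied to $M=N$) says precisely that $j\colon \Psi \ctimes N \to \wPsi \cctimes N$ is a monomorphism. Since $\iota N$ is by definition the pullback of $j$ along $\wpsi_N$, and $i$ is the base change of $j$, the stability of monomorphisms under pullback in $\Mod_A$ shows that $i\colon \iota N \to N$ is injective. Dually, $\wpsi_N$ is counitary and hence split mono with retraction $\wepsilon \cctimes \Id_N$, so its base change $p\colon \iota N \to \Psi \ctimes N$ is a monomorphism as well; this will be used to verify the comodule axioms.

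For the comodule structure I would exhibit $\iota N$ as a subcomodule of the cofree comodule $\Psi \ctimes N$, whose coaction is $\Delta \ctimes \Id_N\colon \Psi \ctimes N \to \Psi \ctimes \Psi \ctimes N$. Since $\Psi$ is flat over $A$, the functor $\Psi \ctimes -$ is exact and preserves the defining pullback, so $\Psi \ctimes \iota N$ is the pullback of $\Psi \ctimes j$ along $\Psi \ctimes \wpsi_N$, with projections $\Psi \ctimes i$ and $\Psi \ctimes p$. To produce a coaction $\kappa\colon \iota N \to \Psi \ctimes \iota N$ I would invoke the universal property of this pullback, feeding in $p\colon \iota N \to \Psi \ctimes N$ and $(\Delta \ctimes \Id_N)\circ p\colon \iota N \to \Psi \ctimes \Psi \ctimes N$. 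The one point to check is their compatibility, i.e. the identity $(\Psi \ctimes \wpsi_N)\circ p = (\Psi \ctimes j)\circ(\Delta \ctimes \Id_N)\circ p$ of maps into $\Psi \ctimes (\wPsi \cctimes N)$. This is exactly where the \emph{second} true-level condition is used: the map $\Psi \ctimes (\wPsi \cctimes N) \to \wPsi \cctimes \wPsi \cctimes N$ is a monomorphism, so it suffices to check the identity after post-composition with it. Both sides then reduce to $(\wDelta \cctimes \Id_N)\circ \wpsi_N \circ i$ — for the left-hand side via naturality of the completion map $j$ and coassociativity of $\wpsi_N$, for the right-hand side via compatibility of $\Delta$ with $I$-adic completion — where the pullback relation $j\circ p = \wpsi_N \circ i$ is invoked on each side. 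This is formally the same computation omitted in the proof of \Cref{prop:comlimaspb}, with the two true-level conditions now supplying the monomorphism hypotheses there, so one may either reproduce it or cite it.

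Once $\kappa$ is defined, the relation $(\Psi \ctimes p)\circ \kappa = (\Delta \ctimes \Id_N)\circ p$ is precisely comodule-linearity of $p$ into the cofree comodule $\Psi \ctimes N$, and the counit and coassociativity axioms for $\kappa$ follow by composing with the monomorphisms $i$ and $p$ and appealing to the axioms already known for the cofree coaction and to counitarity of $\wpsi_N$ (for counitarity one uses that $(\epsilon \ctimes \Id_N)\circ p = i$, which comes from the complete counit and its compatibility with completion). Finally all the data defining $\kappa$ are natural in $N$, so a morphism of complete comodules induces a comodule morphism $\iota N \to \iota N'$; this promotes $\iota$ to a functor $\cComod_{\wPsi}\to \Comod_{\Psi}$ whose composite with $\epsilon_*$ is the original $A$-module-valued $\iota$, giving the desired factorization. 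The main obstacle is the compatibility check of the third paragraph: it is the only place where the second true-level monomorphism is indispensable, and it requires threading coassociativity of the complete coaction and the completion-compatibility of the structure maps through the pullback, which is precisely what flatness of $\Psi$ permits after applying $\Psi \ctimes -$.
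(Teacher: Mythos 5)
Your proof is correct and takes essentially the same approach as the paper: injectivity of $i$ comes from the first true-level condition together with stability of monomorphisms under pullback, and the coaction is produced via the universal property of the pullback $\Psi \ctimes \iota N$ (using flatness of $\Psi$), with the compatibility of $p$ and $(\Delta \ctimes \Id_N)\circ p$ checked after postcomposing with the second true-level monomorphism $\Psi\ctimes\wPsi\cctimes N \to \wPsi\cctimes\wPsi\cctimes N$. The only difference is that you spell out the diagram chases the paper declares routine (and observe that $p$ is monic since $\wpsi_N$ is split), which is a faithful filling-in of the same argument rather than a new one.
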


By abuse of notation, for a true-level Hopf algebroid $(A,\Psi)$, we will denote the functor $\cComod_{\wPsi} \to \Comod_{\Psi}$ given in \Cref{lem:iotalifting} by $\iota$ as well. 

\begin{proof}
Consider the following diagram of $A$-modules
\begin{equation}\label{coactiond}
\xymatrix{
\iota N \ar@{.>}[dr]_{\rho} \ar@/^1.5pc/[drr]^{(\Delta \ctimes 1) p} \ar@/_1.5pc/[ddr]_p& & \\
&\Psi\ctimes \iota N \ar[r]^-{1 \ctimes p} \ar[d]_-{1 \ctimes i} & \Psi\ctimes\Psi\ctimes N \ar[d]^-{1 \ctimes j}  \\
&\Psi\ctimes N \ar[r]_-{1 \ctimes \wpsi_N} & \Psi\ctimes\wPsi\cctimes N,
}
\end{equation}
and note that since $\Psi$ is flat, the square part is a pullback. Hence, to obtain a candidate $\rho$ for the coaction map of $\iota N$ it is enough to show that the outer part of~\eqref{coactiond} commutes; we may do this after composing with the monomorphism $j'\colon \Psi\ctimes\Psi\cctimes N \to \wPsi\cctimes\wPsi\cctimes N$. A routine diagram chase then yields the desired commutativity. A further careful diagram chase, using the fact that $N\to \wPsi\cctimes N$ is counital and coassociative, shows that $\rho$ is indeed a coaction for $\iota N$, so that $\iota N$ is a $\Psi$-comodule. Finally, since $j$ is assumed to be injective, it follows that $i\colon \iota N \to N$ is injective.
\end{proof}

We are now ready to prove the main result of this section. 

\begin{thm}\label{thm:completionformula}
If $(A,\Psi)$ is true-level, then there is an isomorphism $C_\Psi^I \simeq \iota C^I$ of $\Psi$-comodules.
\end{thm}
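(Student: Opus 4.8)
The plan is to realise both $C^I_\Psi(M)$ and $\iota C^I(M)$ as the pullback of one and the same cospan of $A$-modules, and then to check that the two constructions endow this pullback with the same $\Psi$-comodule structure. Before starting, I would record that $C^I(M) = \wM$ is naturally a complete $\wPsi$-comodule: applying $C^I(-)$ to the coaction $\psi_M \colon M \to \Psi \ctimes M$ produces a map $\wpsi \colon \wM \to C^I(\Psi \ctimes M)$, and since $I$ is finitely generated one has a natural identification $C^I(\Psi \ctimes M) \cong \wPsi \cctimes \wM$. Counitality and coassociativity of $\wpsi$ follow by applying the (functorial) completion $C^I(-)$ to the corresponding diagrams for $\psi_M$, so that $\wM \in \cComod_{\wPsi}$ and $\iota C^I(M)$ makes sense.

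Next I would apply \Cref{prop:comlimaspb} to the inverse system $M_k = M \ctimes A/I^k$, whose limit in comodules is by definition $C^I_\Psi(M)$. Here $\lim_k M_k = C^I(M) = \wM$, while associativity of $\ctimes$ gives $\lim_k(\Psi \ctimes M_k) = C^I(\Psi \ctimes M) \cong \wPsi \cctimes \wM$ and likewise $\lim_k(\Psi \ctimes \Psi \ctimes M_k) \cong \wPsi \cctimes \wPsi \cctimes \wM$. Under these identifications the two monomorphism hypotheses of the proposition become the assertions that
\[
\Psi \ctimes \wM \to \wPsi \cctimes \wM \qquad\text{and}\qquad \Psi \ctimes (\wPsi \cctimes \wM) \to \wPsi \cctimes \wPsi \cctimes \wM
\]
are injective, which is precisely the true-level condition of \Cref{def:truelevel} applied to $N = \wM$. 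Thus the proposition applies and presents $C^I_\Psi(M)$ as the pullback of the cospan
\[
\wM \xrightarrow{\ \lim_k(\psi_{M_k})\ } \lim_k(\Psi \ctimes M_k) \xleftarrow{\ j\ } \Psi \ctimes \wM .
\]

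It then remains to identify this cospan with the one defining $\iota$ on $N = \wM$. Under the identification $\lim_k(\Psi \ctimes M_k) \cong \wPsi \cctimes \wM$ the left leg $\lim_k(\psi_{M_k})$ is carried to the completed coaction $\wpsi$, and the right leg $j$ to the canonical map $\Psi \ctimes \wM \to \wPsi \cctimes \wM$; these are exactly the two structure maps in the defining pullback of $\iota \wM$. Hence the underlying $A$-modules agree. Since, by \Cref{lem:iotalifting}, the $\Psi$-comodule structure on $\iota \wM$ is the one induced from this pullback square, and the comodule structure produced by \Cref{prop:comlimaspb} is induced from the very same square, the identification is one of $\Psi$-comodules, yielding $C^I_\Psi(M) \simeq \iota C^I(M)$ naturally in $M$.

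The step I expect to be the main obstacle is the completion identity $C^I(\Psi \ctimes M) \cong \wPsi \cctimes \wM$, together with its iterate for two copies of $\Psi$ and the check that the structure maps are compatible with it; this is exactly where the finite generation of $I$ and the flatness of $\Psi$ are used, and it is what lets the true-level monomorphism conditions be read off verbatim as the hypotheses of \Cref{prop:comlimaspb}. Everything after that is a matter of unwinding definitions and a diagram chase of the kind already performed in \Cref{lem:iotalifting}.
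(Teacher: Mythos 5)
Your proposal is correct and follows essentially the same route as the paper: both apply \Cref{prop:comlimaspb} to the tower $(M \ctimes A/I^k)$, identify the resulting cospan with the one defining $\iota$ on $C^I(M)$ via the natural isomorphism $C^I(\Psi \ctimes M) \cong \wPsi \cctimes \wM$ (the paper phrases this as $C^I(M\ctimes N) \cong C^I(C^I(M)\ctimes C^I(N))$), and invoke the argument of \Cref{lem:iotalifting} for compatibility of the coactions. Your write-up is in fact slightly more explicit than the paper's in checking that the true-level condition applied to $N = \wM$ supplies exactly the monomorphism hypotheses of \Cref{prop:comlimaspb}, which the paper leaves implicit.
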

\begin{proof}
Let $M \in \Comod_{\Psi}$. Specializing \Cref{prop:comlimaspb} to the tower $(M_k) = (M \ctimes A/I^k)$ yields the pullback diagram on the left
\[
\xymatrix{C_\Psi^I(M) \ar[r] \ar[d] & \Psi \ctimes C^I(M) \ar[d] & \iota C^I(M) \ar[r] \ar[d] & \Psi \ctimes C^I(M) \ar[d] \\
C^I(M) \ar[r] & C^I(\Psi \ctimes M) & C^I(M) \ar[r] & C^I(\widehat{\Psi} \ctimes C^I(M)),}
\]
while the right square is a pullback diagram by definition of $\iota$. Using the natural maps between these diagrams, the natural isomorphism 
\[
\xymatrix{C^I(M \ctimes N) \ar[r]^-{\sim} & C^I(C^I(M) \ctimes C^I(N))}
\]
of $A$-modules then furnishes a natural isomorphism $C_\Psi^I(M) \cong \iota C^I(M)$ between pullbacks of $A$-modules. Finally, the argument of \Cref{lem:iotalifting} shows that the canonical comparison isomorphism is compatible with the $\Psi$-coactions on both sides. 
\end{proof}

\subsection{Examples of true-level Hopf algebroids}

In order to provide examples of true-level Hopf algebroids, we will make use of the following result. 

\begin{prop}\label{prop:caresult}
Let $A$ be a regular local Noetherian ring, $\fm$ the maximal ideal of $A$, and $N$ an $A$-module satisfying one of the following two conditions:
\begin{enumerate}
	\item $N$ is a projective $A$-module.  
	\item $A$ is complete, $N$ is flat, and $N \to \wN$ is injective.
\end{enumerate}
If $M$ is a complete $\wA$-module, then the natural completion map
\[
\xymatrix{N \ctimes M \ar[r]^-{\eta} & N \cctimes M}
\]
is a monomorphism. 
\end{prop}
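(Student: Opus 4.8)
The plan is to unwind the completed tensor product. By definition $N \cctimes M$ is the $\fm$-adic completion $\widehat{N \ctimes M} = \lim_k (N \ctimes M)/\fm^k(N \ctimes M)$, and $\eta$ is the canonical map to this completion. For any module $P$ the kernel of $P \to \widehat{P}$ is $\bigcap_k \fm^k P$, so the entire statement reduces to proving
\[
\bigcap_k \fm^k(N \ctimes M) = 0.
\]
The only property of $M$ I will use is that, being complete, the map $M \to \wM$ is injective, i.e. $\bigcap_k \fm^k M = 0$; the task is then to propagate this separatedness from $M$ to $N \ctimes M$.

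In Case~(1), since $A$ is local and $N$ is projective, Kaplansky's theorem makes $N$ free, say $N \cong A^{(S)}$. Then $N \ctimes M \cong M^{(S)}$ and $\fm^k(N \ctimes M) = (\fm^k M)^{(S)}$, so, computing the intersection componentwise on the direct sum,
\[
\bigcap_k \fm^k(N \ctimes M) = \Bigl(\bigcap_k \fm^k M\Bigr)^{(S)} = 0.
\]
This case needs nothing beyond Kaplansky and the separatedness of $M$.

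Case~(2) is the substantial one. First I would record that, for any $k$, right-exactness of $-\ctimes M$ applied to $0 \to \fm^k N \to N \to N/\fm^k N \to 0$ identifies $\fm^k(N \ctimes M)$ with $\ker\bigl(N \ctimes M \to (N/\fm^k N)\ctimes M\bigr)$, whence $\bigcap_k \fm^k(N \ctimes M) = \ker\bigl(N \ctimes M \to \lim_k (N/\fm^k N)\ctimes M\bigr)$. Since $N \to \wN$ is injective by hypothesis and induces isomorphisms $N/\fm^k N \cong \wN/\fm^k \wN$, this inverse limit is unchanged if $N$ is replaced by $\wN$, and $\eta$ factors as $N \ctimes M \xrightarrow{a} \wN \ctimes M \to \widehat{N \ctimes M}$. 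It therefore suffices to show that $a$ is injective and that the completion map of $\wN \ctimes M$ is injective; the latter is the same separatedness statement for the flat, complete module $\wN$ (flat by Enochs, since completions of flats over Noetherian rings are flat), which I would settle exactly as in Case~(1) after noting that each $\wN/\fm^k \wN$ is free over the Artinian ring $A/\fm^k$.

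The hard part is the injectivity of $a\colon N \ctimes M \to \wN \ctimes M$. Writing $C = \wN/N$, the sequence $0 \to N \to \wN \to C \to 0$ is a flat resolution (both $N$ and $\wN$ are flat over the Noetherian ring $A$), so $a$ is injective precisely when $\Tor_1^A(C,M) = 0$, and moreover $\Tor_i^A(C,-)$ vanishes for $i \ge 2$. Tensoring the sequence with $A/\fm^k$ and using $N/\fm^k N \cong \wN/\fm^k \wN$ shows $C/\fm^k C = 0$ and $\Tor_1^A(C, A/\fm^k) = 0$; hence $C \otimes^{\mathbf{L}}_A A/\fm^k \simeq 0$ for every $k$, so that $\Tor_i^A(C, X) = 0$ for all $i$ and every $A/\fm^k$-module $X$, in particular for each $X = M/\fm^k M$. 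The remaining step---promoting this to $\Tor_1^A(C,M) = 0$ for the complete module $M$---is where I expect the real difficulty to lie: one feeds the Milnor sequence $0 \to M \to \prod_k M/\fm^k M \to \prod_k M/\fm^k M \to 0$ (valid since the tower $\{M/\fm^k M\}$ is surjective, so its $\lim^1$ vanishes) into the long exact $\Tor$-sequence, reducing, via $\Tor_2^A(C,-)=0$, to the vanishing of $\Tor_1^A\bigl(C, \prod_k M/\fm^k M\bigr)$. Controlling $\Tor$ against this product---using that $\fm$ is generated by a regular sequence, so that classical and derived completion agree and $M$ is derived complete---is the crux, and it is exactly here that the hypotheses that $A$ is complete regular local and that $N \hookrightarrow \wN$ are genuinely used.
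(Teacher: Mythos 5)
Your Case~(1) is correct, and is in fact a little cleaner than the paper's own argument there: Kaplansky's theorem plus the componentwise computation $\bigcap_k (\fm^k M)^{(S)} = \bigl(\bigcap_k \fm^k M\bigr)^{(S)}$ replaces the paper's embedding $\bigoplus M \hookrightarrow \prod M$ into a complete module. Your reductions in Case~(2) are also correct as far as they go: identifying $\ker\eta$ with $\bigcap_k \fm^k(N \ctimes M)$, factoring $\eta$ through $a\colon N\ctimes M \to \wN \ctimes M$ via $N/\fm^kN \cong \wN/\fm^k\wN$, the flat-dimension-one analysis of $C = \wN/N$, and the conclusion $C \otimes^{\mathbf{L}}_A A/\fm^k \simeq 0$ are all fine. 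But the proof has two genuine gaps, and they sit exactly where the substantive content of the paper's proof lies.

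First, the claim that separatedness of $\wN \ctimes M$ follows ``exactly as in Case~(1)'' is false as stated: Case~(1) rested on $N \ctimes M$ being a direct sum of copies of $M$, so that the intersection of the submodules $\fm^k(N\ctimes M)$ could be computed componentwise. For pro-free $\wN$ the module $\wN \ctimes M$ is \emph{not} such a direct sum; freeness of $\wN/\fm^k\wN$ over $A/\fm^k$ only identifies the stages $(\wN/\fm^k\wN)\ctimes M \cong (M/\fm^kM)^{(S)}$ of the limit, and the support of the image of a fixed element of $\wN\ctimes M$ in these stages can grow with $k$, so no componentwise argument applies. The paper handles precisely this point by a different mechanism: $\wN \hookrightarrow \prod_S A$ is a \emph{split} monomorphism (Hovey--Strickland), $(\prod_S A)\ctimes M \to \prod_S M$ is injective over a Noetherian ring (Lam plus a colimit over finitely generated submodules), and $\prod_S M$ is complete, so the completion map factors an injection. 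Second, the step you yourself flag as the crux, $\Tor_1^A\bigl(C,\prod_k M/\fm^kM\bigr)=0$, is left unproven, and it is not a routine consequence of the termwise vanishing: Tor does not commute with infinite products, and injectivity of the comparison map $N \ctimes \prod_k X_k \to \prod_k (N\ctimes X_k)$ (which is what would make the termwise vanishing suffice) is a Mittag--Leffler-type condition on $N$ of essentially the same nature as the statement being proved. Your proposed input does not close it either: vanishing of the derived completion of $C$ (equivalently $C\otimes^{\mathbf{L}}A/\fm \simeq 0$) controls $\RHom$ into derived complete objects, not $\Tor$ against them. What the paper actually proves, via Enochs' cotorsion-envelope theory, is that $N \subseteq \wN$ is a \emph{pure} submodule; this makes $C = \wN/N$ flat, so $\Tor_1^A(C,-)$ vanishes identically and your map $a$ is injective for free. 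That purity argument, together with the product embedding above, is the missing content of your outline.
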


\begin{proof}
First assume Condition (1), i.e., that $N$ is a projective $A$-module. We claim that the map $N\ctimes M \to N\cctimes M$ is injective for all complete modules $M$. It suffices to consider free modules, so let $F = \bigoplus A$ be free and consider the canonical map
\[
\xymatrix{F \ctimes M \cong \bigoplus M \ar[r] & \prod  M.}
\] 
This an injection, which can be checked by forgetting down to abelian groups. Since $M$ is complete by assumption and $A/\fm^k$ is finitely presented for all $k\ge 1$, the possibly infinite product $\prod M$ is complete as well. Hence, the map above factors through a monomorphism $F\ctimes M \to F\cctimes M$. 

Now assume condition (2) of \Cref{prop:caresult} holds. We first claim that $N$ is pure in $\prod A = \prod_JA$ for some indexing set $J$. Indeed, let $E$ be the cotorsion envelope~\cite{bican_all_2001} of $N$, which is flat because $N$ is. As a special case of the main result of~\cite{enochs_flat_1984}, $E$ is thus of the form $\prod_{\p\in \Spec(A)} T_\p $, where $T_\p$ is the $\p$-completion of a free $A_{\p}$-module. It follows from \cite[Thm.~A.2(b)]{hovey_morava_1999} that $\fm$-adic completion and $L_0$-completion with respect to $\fm$ coincide on flat modules. Consequently, by \cite[Prop.~A.15]{frankland}, we know that if $N$ is flat, then $\wN$ is pro-free, i.e., $\wN=C^{\fm}(\bigoplus A)$. Furthermore, $C^{\fm}(\bigoplus A)\to \prod A$ is a split monomorphism by~\cite[Prop.~A.13]{hovey_morava_1999}, thus it suffices to show that $N \subseteq \wN$ is pure. Since $A$ is complete, it is cotorsion, and so is $\prod A$, because $\Ext$ commutes with direct products in the second variable if $A$ is Noetherian. It follows that the monomorphism $N \to \prod A$ factors through $E$, i.e.,
\[
\xymatrix{N \ar[r] & \prod_{\p\in \Spec(A)}T_\p \ar[r] & \prod A.}
\]
Note that $\m\prod_{\p\neq \m} T_\p =\prod_{\p\neq \m} T_\p $, so the only possible map from $\prod_{\p\neq \m} T_\p$ into $\prod A$ is the zero map, as $\prod A$ is complete. It follows that $T_\p$ must be trivial for all $\p\neq\m$, i.e., $T_\m $ is the cotorsion envelope of $N$. Note that $N$ is pure in $T_\m$, and since we have $N \subseteq \wN \subseteq T_\m$, $N$ is also pure in $\wN$. 

Using \cite[Prop.~4.44]{lambook} and a colimit argument, we can verify that for all $A$-modules $M$ and any indexing set $I$, the canonical map $\left( \prod_I A\right) \ctimes M \to \prod_I M$ is injective. It follows from purity that the composite $N\ctimes M \to \left( \prod A\right) \ctimes M \to \prod M$ is injective as well. But $\prod M$ is complete, so the map above factors through $N\ctimes M\to N\cctimes M$, which thus must be injective. 
\end{proof}

Applying the proposition to $N = \Psi$ and the complete comodules $M$ or $\Psi \cctimes M$ yields:

\begin{cor}
Let $(A,\Psi)$ be a Hopf algebroid with $A$ a regular local Noetherian ring. If $(A,\Psi)$ satisfies one of the following two conditions:
	\begin{enumerate}
		\item $\Psi$ is a projective $A$-module, or
		\item $A$ is a complete ring, $\Psi$ is flat, and the completion map $\Psi \to \wPsi$ is injective, 
	\end{enumerate}
then $(A,\Psi)$ is true-level. 
\end{cor}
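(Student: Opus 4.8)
The plan is to deduce the corollary directly from \Cref{prop:caresult} by taking $N = \Psi$ and feeding in the two complete modules that appear in \Cref{def:truelevel} (working, as in the proposition, with the maximal ideal, i.e.\ $I = \fm$). Unwinding the definition, it suffices to show that for every $M \in \cComod_{\wPsi}$ both canonical maps
\[
\Psi \ctimes M \to \Psi \cctimes M \qquad \text{and} \qquad \Psi \ctimes (\Psi \cctimes M) \to \Psi \cctimes \Psi \cctimes M
\]
are monomorphisms. Each of these is an instance of the completion comparison map $N \ctimes (-) \to N \cctimes (-)$ with $N = \Psi$, applied respectively to the complete module $M$ and to the module $\Psi \cctimes M$. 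So the entire statement reduces to invoking \Cref{prop:caresult} twice.

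First I would verify that the hypotheses imposed on $N = \Psi$ in \Cref{prop:caresult} are met under either of our two assumptions, matching them one-to-one. If condition (1) of the corollary holds, then $\Psi$ is a projective $A$-module, which is exactly condition (1) of the proposition. If condition (2) of the corollary holds, then $A$ is complete, $\Psi$ is flat, and the completion map $\Psi \to \wPsi$ is injective, which is exactly condition (2) of the proposition. In either case \Cref{prop:caresult} applies to $N = \Psi$ against any complete $\wA$-module. Next I would check that the two modules we plug in are genuinely complete: the module $M$ is complete by hypothesis, since an object of $\cComod_{\wPsi}$ has complete underlying $\wA$-module, while $\Psi \cctimes M$ is a completed tensor product and hence complete as an $\wA$-module. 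Applying \Cref{prop:caresult} to the pairs $(N,M) = (\Psi, M)$ and $(N,M) = (\Psi, \Psi \cctimes M)$ then shows that both maps in \Cref{def:truelevel} are monomorphisms, so $(A,\Psi)$ is true-level.

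The main (and essentially only) obstacle is bookkeeping: pairing the two cases of the corollary with the two cases of \Cref{prop:caresult}, and confirming that $\Psi \cctimes M$ qualifies as a complete $\wA$-module so that the proposition may legitimately be invoked a second time. No genuinely new argument is required beyond \Cref{prop:caresult} itself, which already contains the substantive purity- and cotorsion-theoretic input.
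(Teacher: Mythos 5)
Your proposal is correct and is exactly the paper's argument: the paper derives this corollary in one line by applying \cref{prop:caresult} with $N = \Psi$ to the complete modules $M$ and $\Psi \cctimes M$, precisely the two invocations you describe. Your extra bookkeeping (matching the two hypotheses one-to-one and noting that $\Psi \cctimes M$ is itself a complete $\wA$-module) is a faithful filling-in of what the paper leaves implicit.
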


\begin{example}
In~\cite{baker_$i_n$-local_2000} Baker studies the $I_n$-localization $\mathcal{E}(n)$ of Johnson--Wilson theory $E(n)$. In particular, $\mathcal{E}(n)_*$ is local regular Noetherian and he proves that the associated cooperations $\mathcal{E}(n)_*\mathcal{E}(n)$ form a free module over $\mathcal{E}(n)_*$. Therefore, ($\mathcal{E}(n)_*,\mathcal{E}(n)_*\mathcal{E}(n))$ is true-level. Moreover, using \cite[Thm.~C]{hs_leht}, it follows that $\Comod_{\mathcal{E}(n)_*\mathcal{E}(n)}$ is equivalent to the category of $E(n)_*E(n)$-comodules. By \cite{naumann_stack_2007}, the category of comodules over any Hopf algebroid that is Landweber exact of height $n$ is a presentation for the category of quasi-coherent sheaves over the height $n$ Lubin--Tate stack. We thus have a true-level model for the latter category.
\end{example}

\section{Derived completion}\label{sec:derivedcompletion}
In the previous section we studied $I$-adic completion on the abelian category of comodules. In this section, we work in the derived setting and consider torsion and completion functors on suitable derived categories of comodules. In the case of a discrete Hopf algebroid (i.e., in the case of $A$-modules) the derived functor of completion we construct has a well-known relationship with completion on the abelian level, where it computes the left derived functors of completion. As we shall see, this is not true for an arbitrary Hopf algebroid, and the situation is more complicated in this case. 

By applying our methods to the case of $A$-modules, we obtain alternative proofs of some structural results of Hovey and Strickland~\cite{hovey_morava_1999} about derived functors of completion for complete regular local Noetherian rings, and deduce a new criterion for $L$-completeness. 

\subsection{The stable category of comodules and derived torsion and completion}\label{sec:torcomplete} 
In this section we briefly recall the stable category of comodules as well as the basic features of derived torsion and completion that we need. We refer the reader to \cite[Sec.~4 and Sec.~5]{bhv} for more details and for proofs.

\sloppy The category of comodules $\Comod_{\Psi}$ over a flat Hopf algebroid $(A,\Psi)$ is a symmetric monoidal Grothendieck abelian category. As such, its derived $\infty$-category $\D(\Psi) = \cD(\Comod_{\Psi})$ exists; however, it has been noted \cite{hovey_htptheory, krausestablederivedcat,bhv} that it has some undesirable properties --- for example, the tensor unit $A$ need not be compact. Based on Hovey's work \cite{hovey_htptheory}, the authors constructed a stable $\infty$-category $\Stable_{\Psi}$ which is amenable to the techniques in \cite{bhv}. 

Given a small $\infty$-category $\cC$, one can construct the ind-category $\Ind(\cC)$ associated to $\cC$ which can be thought as the smallest $\infty$-category closed under filtered colimits containing $\cC$; see \cite[Sec. 2.2]{bhv} or the references therein. Let $\cG_{\Psi}$ be a set representatives of isomorphism classes of dualizable $\Psi$-comodules.

\begin{defn}
	The stable $\infty$-category of $\Psi$-comodules is defined as the ind-category of the thick subcategory of $\D(\Psi)$ generated by $\cG_\Psi$. In symbols,
	\[	\Stable_\Psi=\Ind(\Thick(\cG_\Psi)).\]
\end{defn}

The resulting stable $\infty$-category is closed symmetric monoidal and compactly generated by $\cG_\Psi$. Following \cite{hovey_htptheory}, we will need the following additional condition to guarantee that the abelian category of $\Psi$-comodules is generated by $\cG_{\Psi}$ as well.

\begin{defn}
  A Hopf algebroid $(A,\Psi)$ is said to be an Adams Hopf algebroid if $\Psi=\colim_i \Psi_i$ for some filtered system $\{\Psi_i\}$ of comodules, which are finitely generated and projective over $A$.
\end{defn}

The ring of cooperations of the ring spectra $MU$, $MSp$, $K$, $KO$, $H
\mathbb{F}_p$, and $K(n)$ are all examples of Adams Hopf algebroids \cite[Lem.~1.4.6, Thm.~1.4.7]{hovey_htptheory}.

Since $\D(\Psi)$ is cocomplete, the universal property of an ind-category  guarantees that the inclusion of $\Thick(\cG_\Psi)$ in $\D(\Psi)$ extends to a symmetric monoidal functor $\omega$ on $\Stable_\Psi$. We summarize the relationship between $\Stable_{\Psi}$ and $\D(\Psi)$ in the following proposition.

\begin{prop}\label{prop:stableprops}
	Let $(A,\Psi)$ be an Adams Hopf algebroid. Then, there is an adjunction \[
\xymatrix{\Stable_{\Psi} \ar@<0.5ex>[r]^-{\omega} &  \ar@<0.5ex>[l]^-{\iota_*} \D(\Psi)}
\]
between $\Stable_{\Psi}$ and $\D(\Psi)$, where $\omega$ is continuous and $\iota_*$ is fully faithful. In the case that $(A,A)$ is a discrete Hopf algebroid, $\Stable_{A}$ is equivalent to $\cD(A)$, the usual derived category of $A$-modules.
\end{prop}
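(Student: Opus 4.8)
The plan is to obtain $\omega$ from the universal property of the Ind-completion, produce $\iota_*$ as its right adjoint, and then identify the fully faithfulness of $\iota_*$ with the assertion that the counit of the adjunction is an equivalence. Since $\Thick(\cG_\Psi)$ is by construction a stable subcategory of $\D(\Psi)$, the inclusion $\Thick(\cG_\Psi)\hookrightarrow\D(\Psi)$ is exact, and as $\D(\Psi)$ admits filtered colimits the universal property of $\Ind(-)$ extends it essentially uniquely to $\omega\colon\Stable_\Psi\to\D(\Psi)$ preserving filtered colimits; this is the asserted continuity, and the symmetric monoidal enhancement is the one recorded just before the statement. A filtered-colimit-preserving functor out of $\Ind(\Thick(\cG_\Psi))$ whose restriction to $\Thick(\cG_\Psi)$ is exact is itself exact, so $\omega$ preserves all colimits. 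As both $\Stable_\Psi$ and $\D(\Psi)$ are presentable, the adjoint functor theorem furnishes a right adjoint $\iota_*$, and it then remains to prove $\iota_*$ is fully faithful and to treat the discrete case.

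A right adjoint is fully faithful exactly when the counit $\omega\iota_*\to\Id_{\D(\Psi)}$ is an equivalence, so I would concentrate on this. Unwinding the adjunction, for $D\in\D(\Psi)$ the object $\iota_*D$ is the Ind-object determined by the presheaf $X\mapsto\Map_{\D(\Psi)}(X,D)$ on the compact generators $X\in\Thick(\cG_\Psi)$; since every object of $\Stable_\Psi$ is the canonical filtered colimit of the compacts mapping to it, and such maps $X\to\iota_*D$ correspond to maps $X\to D$ in $\D(\Psi)$, this exhibits $\iota_*D$ as the canonical colimit of the objects of $\Thick(\cG_\Psi)$ over $D$. Hence $\omega\iota_*D$ is the realization of that same colimit back in $\D(\Psi)$ and the counit is the tautological comparison map. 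To check that it is an equivalence I would use that $\cG_\Psi$ generates $\D(\Psi)$ — a consequence of the Adams condition, under which every comodule is a filtered colimit of dualizable ones — together with the t-structures on $\Stable_\Psi$ and $\D(\Psi)$ whose common heart is $\Comod_\Psi$ and for which $\omega$ is t-exact and the identity on hearts. Concretely, one resolves $D$ by dualizable comodules and verifies that the counit induces an isomorphism on the homology comodules $H_\ast$, the relevant $\Ext$-computations in $\Stable_\Psi$ and in $\D(\Psi)$ being matched by these resolutions.

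The step I expect to be the main obstacle is precisely the pathology flagged before the statement: the dualizable comodules generating $\D(\Psi)$ — among them the unit $A$ — need not be compact in $\D(\Psi)$. Hence $\omega$ does not preserve compact objects, $\D(\Psi)$ need not be compactly generated, and one cannot verify the counit naively by commuting $\Map_{\D(\Psi)}(G,-)$ past the defining filtered colimit for $G\in\cG_\Psi$. The function of the t-structure is to route around this: passing to homology replaces the recalcitrant colimit comparison by a comparison of derived functors on the heart, where the Adams condition supplies the needed resolutions. The genuinely delicate residual point is the convergence of the associated Postnikov towers — the same boundedness phenomenon that forces the appearance of right completions in \Cref{thm:d}.

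Finally, in the discrete case $(A,A)$ the dualizable comodules are the dualizable $A$-modules, i.e.\ the finitely generated projectives, so $\Thick(\cG_{(A,A)})=\operatorname{Perf}(A)=\D(A)^{\omega}$. As $\D(A)$ is compactly generated by $A$, one has $\Stable_A=\Ind(\operatorname{Perf}(A))\simeq\D(A)$ with $\omega$ the canonical equivalence; in this situation $\omega$ is moreover fully faithful, so the adjunction collapses to an equivalence, as claimed.
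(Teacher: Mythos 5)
Your construction of $\omega$ (universal property of $\Ind$), of $\iota_*$ (adjoint functor theorem for presentable categories), and your treatment of the discrete case ($\Thick(\cG_A)=\operatorname{Perf}(A)$, so $\Stable_A=\Ind(\operatorname{Perf}(A))\simeq\cD(A)$ by compact generation) are all correct and agree with what the paper imports from \cite[Sec.~4]{bhv}. The problem is the part you yourself flag as ``the genuinely delicate residual point'': the convergence of Postnikov towers. That is not a residual point --- it is the entire content of the fully faithfulness assertion, and your sketch does not close it. Concretely, fully faithfulness of $\iota_*$ amounts to the counit $\omega\iota_*D\to D$ being a quasi-isomorphism, and to compute $H_*(\omega\iota_*D)$ you must either commute $\Map_{\D(\Psi)}(G,-)$ for $G\in\cG_\Psi$ past the filtered colimit presenting $\iota_*D$ (which fails, since $G$ is not compact in $\D(\Psi)$, as you note), or know that $\iota_*$ is $t$-exact so that $\pi_n(\iota_*D)\cong H_n(D)$. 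But $\iota_*$, being a right adjoint, is only \emph{left} $t$-exact for formal reasons; its right $t$-exactness is equivalent to the very statement being proved. Likewise, your claim that the ``$\Ext$-computations in $\Stable_\Psi$ and in $\D(\Psi)$ are matched'' by resolutions is exactly what can fail: mapping spectra between heart objects in $\Stable_\Psi$ and in $\D(\Psi)$ differ in general, and the discrepancy is governed by a hypercompleteness (Postnikov convergence) phenomenon that does not follow from the Adams condition by elementary means.

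This is why the paper's proof takes a different route at precisely this step: it cites Pstr\k{a}gowski's theorems identifying $\Stable_\Psi$ with the $\infty$-category of spherical sheaves of spectra on dualizable comodules and $\D(\Psi)$ with the full subcategory of \emph{hypercomplete} such sheaves, under which $\iota_*$ becomes the inclusion of hypercomplete sheaves into all sheaves --- a fully faithful right adjoint to hypercompletion essentially by definition. The history here is instructive: in \cite[Sec.~4]{bhv} fully faithfulness was proved by an argument close in spirit to yours (resolutions, $t$-structures, convergence), but only under stronger hypotheses (a Landweber Hopf algebroid with $A$ Noetherian) that guarantee the needed convergence; the whole point of \cref{prop:stableprops} as stated, for an arbitrary Adams Hopf algebroid, is that those hypotheses are removed, and the only known way to do so is the sheaf-theoretic identification. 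So your outline either needs the extra hypotheses (proving a weaker statement) or needs Pstr\k{a}gowski's result as input (at which point the homology comparison is unnecessary).
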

\begin{proof}
	Everything except that $\iota_*$ is fully faithful is proven in \cite[Sec.~4]{bhv}. The fully faithfulness is proved there under some further assumptions on the Hopf algebroid. This conditions can be weakened by using the recent work of Pstr\k{a}gowski. Indeed, by \cite[Thm.~3.7 and Cor.~3.8]{piotr_synthetic} we can identify $\Stable_{\Psi}$ with the $\infty$-category of spherical sheaves of spectra on dualizable comodules, and $\D(\Psi)$ with the $\infty$-category of hypercomplete spherical sheaves of spectra. The functor $\iota_*$ can then be identified with the inclusion of hypercomplete sheaves into all spherical sheaves. 
\end{proof}

Let $I \subseteq A$ be an ideal which we assume to be generated by an  invariant regular sequence $\{x_1,\ldots,x_n\}$. These conditions ensure that $A/I$ is a $\Psi$-comodule and that its image in $\Stable_{\Psi}$ is compact and dualizable.
 \begin{defn}
  	The subcategory $\Stable_{\Psi}^{I-\mathrm{tors}} \subseteq \Stable_{\Psi}$ is defined as the localizing tensor ideal of $\Stable_{\Psi}$ generated by the compact object $A/I$. The inclusion of the category $\Stable_{\Psi}^{I-\text{tors}}$ of $I$-torsion $\Psi$-comodules into $\Stable_{\Psi}$ will be denoted $\iota_{\text{tors}}$. 
  \end{defn}  

The full subcategory $\Stable_{\Psi}^{I-\mathrm{tors}}$ is compactly generated, and hence $\iota_{\text{tors}}$ admits a right adjoint $\Gamma_I$ which is smashing, i.e.,
\[
\Gamma_I(M) \simeq \Gamma_I(A) \otimes M
\] 
for any $M \in \Stable_{\Psi}$. Therefore, we can apply the results of \cite[Sec.~2]{bhv} to obtain localization and completion adjunctions 
\[
\xymatrix{\Stable_{\Psi}^{I-\mathrm{loc}} \ar@<0.5ex>[r]^-{\iota_{\mathrm{loc}}} & \Stable_{\Psi} \ar@<0.5ex>[l]^-{L_I} \ar@<0.5ex>[r]^-{\Lambda^I} & \Stable_{\Psi}^{I-\mathrm{comp}} \ar@<0.5ex>[l]^-{\iota_{\mathrm{comp}}}
}\]
with respect to the ideal $I$. When considered as endofunctors of $\Stable_{\Psi}$, the functors $(\Gamma_I,\Lambda^I)$ define an adjoint pair, so that we have a natural equivalence 
\begin{equation}\label{eq:ldmod}
\Hom_{\Stable_{\Psi}}(\Gamma_I X,Y) \simeq \Hom_{\Stable_{\Psi}}(X,\Lambda^IY)
\end{equation}
for all $X,Y \in \Stable_{\Psi}$. Moreover, $\Gamma_I$ and $\Lambda^I$ induce mutually inverse equivalences
\[
\xymatrix{
\Stable_{\Psi}^{I-\mathrm{comp}} \ar@<1ex>[r]^-{\Gamma_I}_{\sim} & \ar@<1ex>[l]^-{\Lambda^I} \Stable_{\Psi}^{I-\mathrm{tors}}.
}
\]
The next lemma is \cite[Cor.~5.26]{bhv} with slightly weakened hypotheses.
\begin{lem}\label{lem:compformula}
	Let $(A,\Psi)$ be an Adams Hopf algebroid and assume that $I$ is a finitely generated ideal, generated by an invariant regular sequence, then we have that 
	\[
\Lambda^IM \simeq \lim_{\psi,k} A/I^k \otimes M, 
\]
for all $M\in\Stable_\Psi$.
\end{lem}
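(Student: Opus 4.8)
The plan is to exploit the smashing property of $\Gamma_I$ together with the adjunction \eqref{eq:ldmod} to obtain an internal-hom model for $\Lambda^I$, and then to feed in an explicit self-dual Koszul presentation of the torsion unit $\Gamma_I A$. First I would observe that, since $\Gamma_I$ is smashing with $\Gamma_I X \simeq \Gamma_I A \otimes X$, combining the equivalence $\Hom_{\Stable_{\Psi}}(\Gamma_I X, Y) \simeq \Hom_{\Stable_{\Psi}}(X, \Lambda^I Y)$ of \eqref{eq:ldmod} with the tensor--hom adjunction and the Yoneda lemma yields a natural equivalence
\[
\Lambda^I M \simeq \iHom(\Gamma_I A, M)
\]
for all $M \in \Stable_{\Psi}$. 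This reduces the whole computation to a sufficiently explicit description of $\Gamma_I A$.

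Next I would record the stable Koszul presentation of $\Gamma_I A$. For a single invariant element $x_i$ the localization $A[x_i^{-1}] \simeq \colim(A \xrightarrow{x_i} A \xrightarrow{x_i} \cdots)$ together with the fiber sequence $\Gamma_{x_i} A \to A \to A[x_i^{-1}]$ gives $\Gamma_{x_i} A \simeq \Sigma^{-1}\colim_k A/x_i^k$, where $A/x_i^k = \cofib(A \xrightarrow{x_i^k} A)$. Because the regular sequence generating $I$ is invariant, $\Gamma_I$ is the composite of the $\Gamma_{x_i}$, so that
\[
\Gamma_I A \simeq \Sigma^{-n}\colim_k \Kos(x_1^k,\dots,x_n^k),
\]
with $\Kos(x_1^k,\dots,x_n^k) \simeq A/x_1^k \otimes \cdots \otimes A/x_n^k$ the derived Koszul complex, which for a regular sequence represents the comodule $A/(x_1^k,\dots,x_n^k)$. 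Each $A/x_i^k$ is a two-cell complex, dualizable with dual $\Sigma^{-1}A/x_i^k$; hence $\Kos(x_1^k,\dots,x_n^k)$ is compact and dualizable with dual $\Sigma^{-n}\Kos(x_1^k,\dots,x_n^k)$. Since $\iHom$ carries the colimit in the first variable to a limit, self-duality then gives
\[
\Lambda^I M \simeq \iHom\!\Bigl(\Sigma^{-n}\colim_k \Kos(x_1^k,\dots,x_n^k),\, M\Bigr) \simeq \lim_k \bigl(\Kos(x_1^k,\dots,x_n^k)\otimes M\bigr),
\]
the top shift $\Sigma^{-n}$ cancelling exactly against the $\Sigma^{n}$ produced by the self-duality. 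Finally, the towers $\{A/(x_1^k,\dots,x_n^k)\}_k$ and $\{A/I^k\}_k$ are cofinal in each other, since $I^{nk}\subseteq (x_1^k,\dots,x_n^k)\subseteq I^k$, so they have the same limit after tensoring with $M$, yielding $\Lambda^I M \simeq \lim_{\psi,k} A/I^k \otimes M$, with the limit taken in $\Stable_{\Psi}$.

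This argument is the proof of \cite[Cor.~5.26]{bhv}, and the role of the present statement is to run it under the weaker hypotheses (Adams Hopf algebroid, $I$ invariant and regular). The steps that must be re-verified are: (i) that $A/I$, and hence each $\Kos(x_1^k,\dots,x_n^k)$, is compact and dualizable in $\Stable_{\Psi}$ --- this is precisely where invariance and regularity of the sequence enter, guaranteeing that $A/I$ is a comodule and that the Koszul complex is perfect; and (ii) that the colimit presentation of $\Gamma_I A$ is valid and that the resulting limit is the comodule limit $\lim_{\psi,k}$ computed in $\Stable_{\Psi}$, rather than the underlying module limit (which by \Cref{lem:limcomoduleamod} it need not agree with). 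I expect point (i) and the identification of the limit in (ii) to be the genuinely delicate parts, since everything else is formal consequence of the smashing adjunction and Koszul self-duality.
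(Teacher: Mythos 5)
Your outer formal shell is fine and matches what actually makes the lemma true: smashing plus the adjunction \eqref{eq:ldmod} gives $\Lambda^I M \simeq \iHom(\Gamma_I A, M)$, and once $\Gamma_I A$ is presented as a filtered colimit of duals of compact dualizable comodules, the limit formula follows formally, with the limit landing in $\Stable_\Psi$. Indeed the paper's own proof is just a citation: it invokes \cite[Prop.~5.24]{bhv}, which asserts exactly the presentation $\Gamma_I M \simeq \colim_k D(A/I^k)\otimes M$ with $D(A/I^k)=\iHom_{\Psi}(A/I^k,A)$, together with \cite[Cor.~3.8]{piotr_synthetic} to remove the Noetherian hypothesis, and then deduces the lemma as a direct corollary.

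The genuine gap is in your proposed presentation of $\Gamma_I A$. For an \emph{invariant regular sequence} $x_1,\dots,x_n$, only $x_1$ is invariant on the nose; each $x_i$ with $i\ge 2$ is invariant merely modulo $(x_1,\dots,x_{i-1})$. Hence multiplication by $x_i$ is not a comodule endomorphism of $A$, so the objects $A/x_i^k=\cofib(A \xrightarrow{x_i^k} A)$, the localizations $A[x_i^{-1}]$, the tensor factorization $\Kos(x_1^k,\dots,x_n^k)\simeq A/x_1^k\otimes\cdots\otimes A/x_n^k$, and the factorization $\Gamma_I = \Gamma_{x_1}\circ\cdots\circ\Gamma_{x_n}$ simply do not exist in $\Stable_\Psi$. (This is exactly the issue the paper's notion of a \emph{strongly invariant} ideal is designed to address, and the lemma's hypotheses do not grant it.) A relative, iterated version does not rescue the argument either: $x_i$ need not be invariant modulo $(x_1^k,\dots,x_{i-1}^k)$ --- for $(BP_*,BP_*BP)$ one has $\eta_R(v_1)-v_1 = pt_1 \notin (p^k)$ for $k\ge 2$ --- and correspondingly the ideals $(x_1^k,\dots,x_n^k)$ are not invariant, so your final cofinality step compares towers that are not towers of comodules. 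What does exist is the tower $\{A/I^k\}_k$ for the honest powers $I^k$ (powers of invariant ideals are invariant), and these are compact and dualizable by \cite[Lem.~5.13]{bhv}; but then the identification $\Gamma_I A \simeq \colim_k D(A/I^k)$ is genuinely nontrivial --- it is the content of \cite[Prop.~5.24]{bhv}, whose proof passes through the comparison of $\Stable_\Psi$ with $\D(\Psi)$ (fully faithfulness of $\iota_*$), which is precisely where the Noetherian hypothesis entered in \cite{bhv} and what Pstr\k{a}gowski's theorem now supplies for arbitrary Adams Hopf algebroids. Your element-wise stable Koszul argument is the correct and standard one in the discrete case $\Stable_A \simeq \D(A)$, but it does not survive the passage to comodules, and that failure is the entire difficulty the lemma is negotiating.
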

\begin{proof}
By \cite[Cor.~3.8]{piotr_synthetic} we can remove the Noetherian hypothesis from \cite[Prop.~5.24]{bhv}, of which \cite[Cor.~5.26]{bhv} is a direct corollary. 
\end{proof}
In the discrete case, there are spectral sequences computing the (co)homology of the torsion and completion functors. These are given in terms of local cohomology and local homology of $A$-modules with respect to an ideal $I \subseteq A$, denoted $H^s_I$ and $H_s^I$ respectively, for which we refer the reader to \cite{gm_localhomology} or \cite[Sec.~3.2]{bhv}. The following is then \cite[Prop.~3.20]{bhv}.
\begin{prop}\label{prop:homology_ss}
		Let $A$ be a commutative ring and $I$ a finitely generated ideal. Let $X \in \D_A$. There are strongly convergent spectral sequences of $A$-modules:
	\begin{enumerate}
		\item $E_2^{s,t}=H^{s}_I(H^tX) \implies H^{s+t}(\Gamma_I X)$, and
		\item $E^2_{s,t}=H_{s}^I(H_tX) \implies H_{s+t}(\Lambda^I X)$.
	\end{enumerate}
\end{prop}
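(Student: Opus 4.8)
The plan is to reduce the statement, via the equivalence $\Stable_A \simeq \cD(A)$ of \Cref{prop:stableprops}, to the classical hyper(co)homology spectral sequences for the total derived torsion and completion functors. Under this equivalence the stable-category functors $\Gamma_I$ and $\Lambda^I$ are identified with the total derived functors $\mathbf{L}\Lambda^I$ of $I$-adic completion $C^I$ and $\mathbf{R}\Gamma_I$ of the $I$-torsion functor $\Gamma_I^0(M) = \{m \in M : I^k m = 0 \text{ for some } k\}$; by definition their homological derived functors on a single module are local cohomology $H^s_I$ and local homology $H_s^I$. Concretely, since $\Gamma_I$ is smashing with $\Gamma_I A$ computed by the bounded stable Koszul--\v{C}ech complex $\check{C}^\bullet = \check{C}(x_1,\dots,x_n)$ on a finite generating set of $I$ --- a complex of flat $A$-modules concentrated in cohomological degrees $0,\dots,n$ --- we have $\Gamma_I X \simeq \check{C}^\bullet \otimes_A X$ and, by the $(\Gamma_I,\Lambda^I)$-adjunction of \eqref{eq:ldmod}, $\Lambda^I X \simeq \mathbf{R}\Hom_A(\check{C}^\bullet, X)$. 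The boundedness of $\check{C}^\bullet$ shows that both functors have cohomological amplitude confined to the strip $0 \le s \le n$, which is what will force strong convergence even for unbounded $X$.

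For the torsion spectral sequence (1) I would argue directly with the Koszul model. Since $\check{C}^\bullet$ is a bounded complex of flats, $\check{C}^\bullet \otimes_A X$ is the total complex of a bicomplex, and I would filter by the Koszul degree. Computing cohomology in the $X$-direction first, flatness gives $E_1^{s,t} = \check{C}^s \otimes_A H^t X$, whence $E_2^{s,t} = H^s(\check{C}^\bullet \otimes_A H^t X) = H^s_I(H^t X)$, using that local cohomology is computed by the \v{C}ech complex for any finitely generated ideal. This abuts to $H^{s+t}(\Gamma_I X)$, and strong convergence is automatic because the spectral sequence lives in the bounded horizontal strip $0 \le s \le n$.

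For the completion spectral sequence (2) I would instead invoke the hyperhomology (Cartan--Eilenberg) spectral sequence of the total left derived functor $\mathbf{L}\Lambda^I$: resolving $X$ by a Cartan--Eilenberg resolution and applying completion yields $E^2_{s,t} = (L_s\Lambda^I)(H_t X) = H_s^I(H_t X)$ abutting to $H_{s+t}(\Lambda^I X)$. Here the identification $L_s\Lambda^I = H_s^I$ is the Greenlees--May computation of local homology as the left derived functors of completion, and convergence again follows from the bounded amplitude of $\mathbf{L}\Lambda^I$ supplied by the $\mathbf{R}\Hom_A(\check{C}^\bullet,-)$ model.

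The main obstacle is the completion half. Unlike the torsion functor, $I$-adic completion $C^I$ is neither left nor right exact, so one cannot naively run the Koszul double-complex argument: the relevant $\mathbf{R}\Hom_A(\check{C}^s,-)$ need not be exact, because the $\check{C}^s$ are only flat and not projective. One must therefore work with the genuinely triangulated total derived functor $\mathbf{L}\Lambda^I$ and import the Greenlees--May theory both to identify its derived functors with $H_\ast^I$ and to guarantee that its amplitude is bounded by $n$; this finiteness, stemming from the finite generation of $I$, is exactly what underwrites both the $E^2$-identification and the strong convergence of the spectral sequence for unbounded $X$.
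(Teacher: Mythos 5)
Your part~(1) is correct and works in the stated generality: since $\Gamma_I$ is smashing with $\Gamma_IA$ modeled by the bounded complex $\check{C}^\bullet$ of flat $A$-modules, the column filtration of the double complex $\check{C}^s\otimes_A X^t$ is finite, so the associated spectral sequence converges strongly even for unbounded $X$, and flatness gives the $E_2$-identification. Note that no comparison with derived functors of the torsion functor is needed here: in the sources the paper points to for the definitions (Greenlees--May and \cite[Sec.~3.2]{bhv}), $H^s_I(M)$ is \emph{defined} as $H^s(\Gamma_IM)$, i.e.\ as \v{C}ech cohomology, so your identification is definitional rather than a weak-proregularity statement.

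The genuine gap is in part~(2). You route the completion half through the total left derived functor $\mathbf{L}C^I$ of $I$-adic completion, invoking Greenlees--May both to identify the $E^2$-page ($L_sC^I\cong H_s^I$) and, implicitly, the abutment ($\Lambda^I\simeq \mathbf{L}C^I$, which is \Cref{thm:gm_localhomology}(1)). Both of these require $I$ to be generated by a weakly proregular sequence, whereas the proposition assumes only that $I$ is finitely generated; for a general finitely generated ideal, $\Lambda^I$ and $\mathbf{L}C^I$ genuinely differ, so your argument proves a strictly weaker statement. The detour is also unnecessary: just as in (1), $H_s^I(M)$ is by definition $H_s(\Lambda^IM)$, so no Greenlees--May input is needed on the $E_2$-page. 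The correct repair of the obstacle you yourself identified (the $\check{C}^s$ are flat but not projective, so $\Hom_A(\check{C}^\bullet,-)$ is not automatically derived) is to replace $\check{C}^\bullet$ by its quasi-isomorphic telescope model $P^\bullet$, a bounded complex of free $A$-modules as in Greenlees--May; then $\Lambda^IX\simeq \Hom_A(P^\bullet,X)$ on the nose, and the same finite-filtration double-complex argument as in (1), now using exactness of $\Hom_A(P^s,-)$ for each free $P^s$, yields the spectral sequence with the stated $E_2$-page and strong convergence. For comparison: the paper itself does not reprove this proposition but quotes \cite[Prop.~3.20]{bhv}, where both halves are treated uniformly by applying the exact functors $\Gamma_I$ and $\Lambda^I$ to the truncation filtration of $X$ and using that $H^s_I$ and $H^I_s$ of a module vanish outside $0\le s\le n$ to get strong (conditional plus vanishing-line) convergence --- another route that avoids derived functors of adic completion altogether.
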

We finish this subsection with a couple of general remarks.
\begin{rem}
	In the special case of a discrete Hopf algebroid $(A,A)$ the conditions on the ideal $I$ can be weakened. Here, instead of $A/I$ we can use a suitable Koszul complex for the construction of $\Gamma_I$ and $\Lambda^I$, see Remark 3.10 and Theorem 3.11 of \cite{bhv}.
\end{rem}
\begin{rem}
	The theory of torsion and complete objects in a suitable category $\cC$ has previously been studied by Hovey--Palmieri--Strickland \cite[Thm.~3.3.5]{hps_axiomatic} and Dwyer--Greenlees \cite{dwyer_complete_2002} among others. 
\end{rem}
\subsection{Derived completion for modules}\label{sec:lcompletion}
In the previous section we introduced a derived version of completion for the stable category associated to a suitable Hopf algebroid. In this section we focus on discrete Hopf algebroids, i.e., the derived category of $A$-modules for a commutative ring $A$. We will show how the derived completion functor $\Lambda^I$ is related to the derived functors of ordinary $I$-adic completion. 

To that end, let $A$ be a commutative ring and $I$ an ideal in $A$. Recall that the $I$-adic completion, defined by  $C^I(M)= \lim_s M/I^sM$, is neither right nor left exact in general for non-finitely generated $A$-modules, see \cite[App.~A]{hovey_morava_1999} for example. 
We are then led to consider either the left or right derived functors of completion. It turns out that if $A$ is an integral domain, then the higher right derived functors of completion vanish \cite[Sec.~5]{gm_localhomology}, and we hence focus on the left derived functors of completion. 
\begin{defn}
	Let $A$ and $I$ be as above and $M \in \Mod_A$. For $s \ge 0$, let $L_s^I(M)$ denote the $s$-th left derived functor of $I$-adic completion on $M$. 
\end{defn}
If the ideal $I$ is clear from context, then we will usually just write $L_s(M)$. 

Note that $L_0M$ is in general not equivalent to $C^I(M)$, see \Cref{cor:gmses}. There is a natural homomorphism $\eta\colon M \to L_0 M$, and $M$ is said to be $L$-complete when $\eta$ is an isomorphism. Let $\LM \subseteq \Mod_A$ be the full subcategory of $L$-complete $A$-modules. The following is proved under the assumption that $A$ is a complete local ring in \cite{hovey_morava_1999}.
\begin{prop}\label{prop:lcomplete}
	The category $\LM$ of $L$-complete modules is an abelian subcategory of $\Mod_A$. There are enough projectives in $\LM$, and each projective object is a retract of a pro-free module, i.e., the completion of a free $A$-module. Moreover, for all $k \ge 0$, the modules $L_kM$ lie in $\LM$. 
\end{prop}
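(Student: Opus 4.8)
The plan is to follow Hovey and Strickland's treatment of $L$-completion \cite{hovey_morava_1999}, isolating the two places where their standing hypothesis that $A$ is complete local Noetherian is actually used and replacing them with inputs valid for a general $A$ with $I$ generated by a regular sequence. All four assertions are formal consequences of an \emph{idempotency package}, namely (A): $L_0$ is idempotent, $L_sN=0$ for $s>0$ whenever $N$ is $L$-complete, and each $L_sM$ is $L$-complete; together with the \emph{free case}, (B): $L_sF=0$ for $s>0$ and $L_0F=\hat F$ for every free module $F$. Indeed, (A) makes $L_0$ a reflection of $\Mod_A$ onto $\LM$: for $L$-complete $N$ any map $M\to N$ factors uniquely through $\eta_M$, because $L_0\eta_M$ is invertible. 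Consequently $\LM$ is closed under limits, hence under kernels; cokernels in $\LM$ are computed as $L_0$ of the ambient cokernel and are $L$-complete by (A); and a chase of the long exact sequence of the $L_s$, using that $L_1N=0$ for $L$-complete $N$, shows $\LM$ is closed under extensions. These facts give the abelian structure. For enough projectives, a free surjection $F\twoheadrightarrow N$ onto an $L$-complete module yields, after the right-exact functor $L_0$, a surjection $\hat F=L_0F\to N$; since $\Hom_{\LM}(\hat F,-)\cong\Hom_{\Mod_A}(F,-)$ is exact, the pro-free module $\hat F$ is projective in $\LM$. A projective object is then a retract of such a pro-free cover, and the last assertion $L_kM\in\LM$ is part of (A).

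The free case (B) is read off from the short exact sequence
\[
0 \to {\ilim}^{1}_{k}\, \Tor_{s+1}(A/I^k,M) \to L_s M \to \ilim_{k}\, \Tor_s(A/I^k,M) \to 0,
\]
which I would obtain by combining \Cref{lem:compformula} — presenting $\Lambda^IM$ as the homotopy limit of the tower $(A/I^k\otimes M)$ in $\D(A)\simeq\Stable_A$ — with the Milnor sequence for such a limit and the identification $H_s\Lambda^IM=L_sM$ coming from \Cref{prop:homology_ss} together with the Greenlees--May comparison of local homology with left derived completion \cite{gm_localhomology}. For $F$ free one has $\Tor_{>0}(A/I^k,F)=0$, and the tower $(F/I^kF)$ has surjective transition maps, hence is Mittag-Leffler with vanishing ${\ilim}^{1}$; therefore $L_{>0}F=0$ and $L_0F=\ilim_kF/I^kF=\hat F$.

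The main obstacle is the idempotency package (A). Using the same short exact sequence and the observation that each $\Tor_q(A/I^k,M)$ is an $A/I^k$-module, hence annihilated by $I^k$, one reduces (A) to a single lemma: for any tower $(N_k)$ with $I^kN_k=0$, both $\ilim_kN_k$ and ${\ilim}^{1}_{k}N_k$ are $L$-complete. Granting it, $L_sM$ becomes an extension of $L$-complete modules, and the same sequence controls the higher derived functors of an $L$-complete module, delivering their vanishing and the idempotency exactly as in \cite{hovey_morava_1999}. This torsion-tower lemma is the one genuinely new point in our generality and the step I expect to be hardest: it is precisely where the regular — equivalently, weakly proregular — nature of the sequence $x_1,\dots,x_n$ is needed, the complete-local hypothesis of \cite{hovey_morava_1999} being used only to secure it. Conceptually it is the abelian shadow of the idempotency of the derived completion $\Lambda^I$ on $\Stable_A\simeq\D(A)$ (\Cref{prop:stableprops}), whose essential image is the subcategory of complete objects; I would prove it by using the boundedness of the torsion to reduce to towers of the form $(A/I^k\otimes-)$ and then invoking \Cref{lem:compformula}, so that the $L$-completeness of the two limit terms is read off from the homology of $\Lambda^I$.
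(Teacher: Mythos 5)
Your formal scaffolding is fine: deriving the abelian structure, the projectives, and the retract statement from the idempotency package (A) plus the free case (B) is essentially the Hovey--Strickland route that the paper itself invokes, and (B) is actually immediate (a free module is its own projective resolution, so $L_0F\cong C^I(F)$ and $L_sF=0$ for $s>0$ by the very definition of left derived functors --- no need for the Milnor sequence). The genuine gap is in your proof of (A), and it is a circularity in two places. First, to pass from ``both outer terms of the Greenlees--May sequence (\Cref{cor:gmses}) are $L$-complete'' to ``$L_sM$ is $L$-complete'' you need $\LM$ to be closed under extensions; but you obtained extension-closure as a \emph{consequence} of (A), via the vanishing $L_1N=0$ for $L$-complete $N$, so the reduction of (A) to your torsion-tower lemma presupposes part of (A). Second, and more seriously, your sketch of the torsion-tower lemma --- ``the $L$-completeness of the two limit terms is read off from the homology of $\Lambda^I$'' --- assumes that the homology groups of a $\Lambda^I$-local complex are $L$-complete modules. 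Since $H_s(\Lambda^IM)\cong L_s^I(M)$ by \Cref{thm:gm_localhomology}, that assumption \emph{is} the third clause of (A), i.e.\ exactly what the lemma was introduced to prove; in the paper's logical order this implication (``$M$ is $I$-complete iff $H_*M$ is $L$-complete'', \Cref{thm:derder}(1)) is proved \emph{using} \Cref{prop:lcomplete}, so it is not an available input here. (The intermediate step ``reduce to towers of the form $A/I^k\otimes -$'' is also unjustified: an arbitrary tower of $I$-power-torsion modules need not be of this form, even pro-isomorphically.)

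For comparison, the paper sidesteps the hard content by citation: abelianness is quoted from \cite[Thm.~A.6(e)]{hovey_morava_1999} after observing that the proof there uses only the right exactness of $L_0$ and the long exact sequence of the $L_k$ (both automatic for left derived functors), the statement $L_kM\in\LM$ is quoted from \cite[Thm.~4.1]{gm_localhomology}, and the enough-projectives argument is the same adjunction argument you give. If you want your self-contained route to work, the circle can be broken by taking the $\Ext$-vanishing condition of \Cref{thm:Lcompletecriterion} as the \emph{primary} notion: the class $\mathcal{E}$ of modules $N$ with $\Ext^q_A(x_i^{-1}A/(x_1,\dots,x_{i-1}),N)=0$ for all $i,q$ is closed under kernels, cokernels, extensions, and under $\lim$ and $\lim^1$ of towers (long exact sequences plus a degreewise Milnor-sequence argument, using that the test objects have finite projective dimension); every module killed by $I^k$ lies in $\mathcal{E}$ (each $x_i^k$ acts both invertibly and as zero on the relevant $\Ext$ groups); hence your Greenlees--May sequence places $L_sM$ in $\mathcal{E}$ with no circularity. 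One then identifies $\mathcal{E}=\LM$: the inclusion $\LM\subseteq\mathcal{E}$ because $M\cong L_0M\in\mathcal{E}$, and $\mathcal{E}\subseteq\LM$ via \Cref{cor:Lamdalocal} together with \Cref{thm:gm_localhomology}. That identification is precisely the step your sketch elides, and it is where the real work lives.
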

\begin{proof}
	The fact that $\LM$ is an abelian subcategory of $\Mod_A$ is proved in the same way as in \cite[Thm.~A.6(e)]{hovey_morava_1999} --- the proof only uses that $L_0$ is right exact and that for any short exact sequence $M' \to M \to M''$ there is a long exact sequence 
	\[
L_{k+1}M'' \to L_kM' \to L_kM \to L_kM'' \to L_{k-1}M'. 
	\] 
	Both of these follow from the definition of the functors $L_k$ as the left derived functors of $I$-adic completion. 

	Let $F$ be a free $A$-module. Then $L_0F \cong C^I(F)$ is projective in $\LM$ since for any $M \in \LM$ there is an adjunction $\Hom_{\LM}(L_0F,M) \cong \Hom_{A}(F,M)$ and because epimorphisms in $\LM$ are epimorphisms in $\Mod_A$. To see that $\LM$ has enough projectives, let $M \in \LM$, then there exists a free $A$-module $F$ and an epimorphism $F \to M$. Since $L_0$ is right exact, this gives rise to an epimorphism $L_0F \to M$. If $M$ is itself projective in $\LM$, then this must have a section, and we see that any projective in $\LM$ is a retract of a pro-free one. 

	The final statement can be proven precisely as in \cite[Thm.~4.1]{gm_localhomology}.
	\end{proof}

However, $\LM$ is not a Grothendieck category in general, because filtered colimits are not necessarily exact.  Despite the fact that $\LM$ is not a Grothendieck category, in \Cref{sec:tilting} we will define a version of its derived category and show that it is equivalent to a certain full subcategory of the usual derived category $\D(A)$. 

In the previous subsection we introduced abstract local homology functors $\Lambda^I$ for the stable category of $\Psi$-comodules. In the case of $A$-modules $\Lambda^I$ acquires two interpretations: First, it agrees with the total left derived functor of completion. On the other hand, the homology $L_s^I(M)$ can  be computed in terms of the local homology groups $H_s^I(M)$. Both of these hold under a mild regularity assumption on the ideal $I$ called weakly proregularity, for example, if $I$ is generated by a finite regular sequence or for arbitrary ideals if $A$ is Noetherian. We summarize these results in the next theorem; for a proof of the first part, see \cite[Prop.~3.16]{bhv} - note that we can remove the boundedness assumption there using \cite[Cor.~5.25]{psy}. The second part was first proved by Greenlees and May~\cite[Thm.~2.5]{gm_localhomology}, and then extended by Schenzel \cite[Thm.~1.1]{schenzel_proreg} and Porta, Shaul, and Yekutieli \cite[Cor.~5.25]{psy}. 
\begin{thm}\label{thm:gm_localhomology}
	Let $A$ be a commutative ring and $I$ an ideal generated by a finite weakly proregular sequence. Then:
	\begin{enumerate}
		\item There is a natural equivalence $\Lambda^I\simeq \mathbf{L}C^I$.
		\item For all $A$-modules $M$, and all $s\geq 0$, there is a natural isomorphism	$H^I_s(M)\cong L_s^I(M)$.
	\end{enumerate}
\end{thm}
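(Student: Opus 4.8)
The plan is to establish part (1) directly from the homotopy-limit presentation of $\Lambda^I$ furnished by \Cref{lem:compformula}, and then to deduce part (2) by collapsing the spectral sequence of \Cref{prop:homology_ss}; weak proregularity enters only to guarantee these two inputs.

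For part (1), I would work in the discrete case $\Stable_A \simeq \cD(A)$ and begin from the equivalence $\Lambda^I M \simeq \lim_k(A/I^k \otimes M)$ of \Cref{lem:compformula}, where $\otimes$ is the derived tensor product and $\lim_k$ the homotopy limit. Fixing a $K$-flat (e.g. $K$-projective) resolution $P_\bullet \xrightarrow{\sim} M$, the derived tensor product becomes the underived one, so $A/I^k \otimes M \simeq A/I^k \ctimes P_\bullet = P_\bullet/I^kP_\bullet$, and hence $\Lambda^I M \simeq \lim_k(P_\bullet/I^kP_\bullet)$. The transition maps of the tower $\{P_\bullet/I^kP_\bullet\}_k$ are surjective in each homological degree, so the Milnor $\lim^1$-term vanishes degreewise; consequently the homotopy limit is the strict termwise limit, whose degree-$n$ term is $\lim_k P_n/I^kP_n = C^I(P_n)$. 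In other words $\Lambda^I M$ is represented by the complex $C^I(P_\bullet)$ obtained by completing the resolution termwise, which is exactly $\mathbf{L}C^I M$; this identification is natural and yields $H_s(\mathbf{L}C^I M) = L_s^I(M)$ in particular.

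The technical heart — and the point at which the boundedness hypothesis of \cite[Prop.~3.16]{bhv} must be discarded — is ensuring that the presentation of \Cref{lem:compformula} is valid for unbounded $M$, equivalently that $\Lambda^I$ has finite $I$-local homological dimension so that the termwise-completed resolution converges to the correct object. This is precisely where weak proregularity of $\{x_1,\dots,x_n\}$ is needed: it forces $H_s^I$ to vanish outside $0 \le s \le n$ and renders the Koszul/telescope model of $\Lambda^I$ independent of the chosen generators, and it is supplied in the required generality by \cite[Cor.~5.25]{psy} (together with \cite{piotr_synthetic} for the removal of the Noetherian hypothesis from \Cref{lem:compformula}).

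For part (2), I would feed a module $M$, viewed as a complex $X = M$ concentrated in degree $0$, into the spectral sequence of \Cref{prop:homology_ss}(2). Then $H_tX = 0$ for $t \ne 0$ and $H_0X = M$, so $E^2_{s,t} = H_s^I(M)$ for $t = 0$ and vanishes otherwise; being concentrated in a single row it degenerates, giving $H_s(\Lambda^I M) \cong H_s^I(M)$. Combined with part (1) this produces the asserted natural isomorphism $H_s^I(M) \cong H_s(\mathbf{L}C^I M) = L_s^I(M)$. This recovers the theorem of Greenlees--May \cite{gm_localhomology}; its original, spectral-sequence-free route instead compares the telescope complex computing $H_*^I$ with a flat resolution computing $L_*^I$, the two agreeing once more by weak proregularity \cite{schenzel_proreg, psy}.
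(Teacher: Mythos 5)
The paper itself gives no proof of this theorem: it is presented as a summary of known results, with part (1) cited from \cite[Prop.~3.16]{bhv} (boundedness removed via \cite[Cor.~5.25]{psy}) and part (2) from Greenlees--May, Schenzel, and \cite[Cor.~5.25]{psy}. Your attempt at an actual argument is therefore more ambitious, and its core is correct \emph{when $I$ is generated by a regular sequence}: taking a K-projective resolution $P_\bullet \xrightarrow{\sim} M$, the tower $\{P_\bullet/I^kP_\bullet\}_k$ has degreewise surjective transition maps, so the homotopy limit furnished by \Cref{lem:compformula} is the strict limit $C^I(P_\bullet)$, which is $\mathbf{L}C^IM$ by definition; part (2) then follows by taking homology (your single-row degeneration of \Cref{prop:homology_ss} is exactly this). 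One caveat: you must commit to K-projective rather than merely K-flat resolutions, since $\mathbf{L}C^I$ is defined via the former, and the assertion that $C^I$ applied to a K-flat resolution also computes it is itself a nontrivial theorem in this circle of ideas, not a definition.

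The genuine gap is your very first step. \Cref{lem:compformula} is stated, and proved in \cite{bhv}, only for ideals generated by an (invariant) regular sequence, whereas the theorem assumes only weak proregularity --- a strictly weaker condition (every ideal in a Noetherian ring is weakly proregular, whether or not it admits a regular generating sequence). Your ``technical heart'' paragraph misdiagnoses what must be generalized: the issue is not unboundedness of $M$ or finite homological dimension (\Cref{lem:compformula} already holds for all $M \in \Stable_{\Psi}$), but the hypothesis on the generating sequence; and the citations you offer do not repair this. Pstr\k{a}gowski's work removes Noetherian and comodule-theoretic hypotheses and says nothing about weakly proregular sequences in the discrete case, while \cite[Cor.~5.25]{psy} is precisely the result this theorem summarizes, so invoking it makes your argument circular --- at that point you are simply re-citing the literature, as the paper does. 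What is actually missing is the assertion that for weakly proregular $\mathbf{x}=(x_1,\dots,x_n)$ the augmentations $\Kos(A;\mathbf{x}^k) \to A/(x_1^k,\dots,x_n^k)$ induce an equivalence $\lim_k\left(\Kos(A;\mathbf{x}^k)\otimes M\right) \xrightarrow{\sim} \lim_k\left(A/I^k \otimes M\right)$. This is provable: weak proregularity says the towers of higher Koszul homologies are pro-zero, pro-zero towers remain pro-zero under any additive functor, so the fibres of the augmentations have pro-zero hyper-Tor towers and hence vanishing $\lim$ and $\lim^1$; since in the discrete case $\Lambda^I$ is by construction $\iHom(\colim_k \Kos(A;\mathbf{x}^k)^\vee,-) \simeq \lim_k(\Kos(A;\mathbf{x}^k)\otimes -)$, this extends the limit formula to the weakly proregular setting, after which your resolution argument goes through. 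As written, however, this step --- which is the actual content of the theorem --- is absent.
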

We now establish a new criterion for $L$-completeness, generalizing the $\Ext$-$p$-completeness criterion due to Bousfield and Kan. We start with a general lemma which the authors were unable to find in the literature as stated. We note that the same argument works for ordinary categories as well and so it could be useful in a variety of situations.  

\begin{lem}\label{lem:pushgen}
Suppose $f_!\colon \cC \to \cD$ is a continuous functor between presentable $\infty$-categories with right adjoint $f^*$ and that $\cC$ is generated by a set of objects $\cG$. If $f^*$ is conservative, then $\cD$ is generated by $f_!\cG$. If we assume additionally that every element of $\cG$ is compact and that $f^*$ admits a further right adjoint $f_*$, then  $\cD$ is compactly generated by $f_!\cG$.
\end{lem}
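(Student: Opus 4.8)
The plan is to prove the two assertions in sequence, using the adjunction $(f_!, f^*)$ together with the standard characterization of a generating set in terms of the (co)representability of zero objects. Recall that a set $\cS$ of objects generates a presentable $\infty$-category $\cD$ if and only if an object $Y \in \cD$ is zero whenever $\Map_{\cD}(S, Y) \simeq 0$ for all $S \in \cS$; equivalently, the detection-of-equivalences formulation says that a morphism is an equivalence precisely when it induces equivalences on all mapping spaces out of $\cS$.

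First I would treat the generation statement. Let $Y \in \cD$ be an object with $\Map_{\cD}(f_! G, Y) \simeq 0$ for every $G \in \cG$. By the adjunction $(f_!, f^*)$ we have a natural equivalence
\[
\Map_{\cD}(f_! G, Y) \simeq \Map_{\cC}(G, f^* Y),
\]
so the vanishing hypothesis translates into $\Map_{\cC}(G, f^* Y) \simeq 0$ for all $G \in \cG$. Since $\cG$ generates $\cC$, this forces $f^* Y \simeq 0$. Now the hypothesis that $f^*$ is conservative (a functor between stable $\infty$-categories that reflects zero objects) yields $Y \simeq 0$. Hence $f_! \cG$ generates $\cD$, as desired.

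For the second assertion I would argue that, under the additional hypotheses, each $f_! G$ is compact. The key input is that $f^*$ itself admits a right adjoint $f_*$: this means $f^*$ is a left adjoint and therefore preserves all colimits, in particular filtered colimits. Thus for a filtered diagram $(Y_i)$ in $\cD$ the chain of equivalences
\[
\Map_{\cD}(f_! G, \colim_i Y_i) \simeq \Map_{\cC}(G, f^* \colim_i Y_i) \simeq \Map_{\cC}(G, \colim_i f^* Y_i) \simeq \colim_i \Map_{\cC}(G, f^* Y_i) \simeq \colim_i \Map_{\cD}(f_! G, Y_i)
\]
shows that $f_! G$ is compact, where the first and last steps use the $(f_!, f^*)$-adjunction, the second uses that $f^*$ is a left adjoint, and the third uses that $G$ is compact in $\cC$. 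Combined with the generation statement just established, the set $f_! \cG$ compactly generates $\cD$.

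The only genuine subtlety is bookkeeping the hypotheses correctly: the first part needs just continuity of $f_!$ and conservativity of $f^*$, whereas compact generation additionally requires both the compactness of the generators in $\cC$ and the existence of the further right adjoint $f_*$, which is precisely what promotes $f^*$ to a colimit-preserving functor. I do not expect any real obstacle beyond verifying that $f^*$ preserves filtered colimits from the existence of $f_*$ and threading the adjunctions through the compactness computation above.
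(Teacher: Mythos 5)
Your proof is correct and follows essentially the same route as the paper's: the adjunction $(f_!,f^*)$ plus conservativity of $f^*$ gives generation, and the existence of $f_*$ makes $f^*$ colimit-preserving, which gives compactness of the objects $f_!G$. The only cosmetic differences are that you detect zero objects where the paper detects equivalences (equivalent here since the relevant categories are stable, though the paper's formulation also covers unpointed presentable categories), and that you spell out the standard computation behind the fact, invoked directly in the paper, that a left adjoint preserves compact objects whenever its right adjoint preserves filtered colimits.
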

\begin{proof}
Suppose $\alpha\colon X \to Y$ is a morphism in $\cD$ such that $\Hom_{\cD}(f_!G,\alpha)$ is an equivalence for all $G \in \cG$. By adjunction, $\Hom_{\cC}(G,f^*\alpha)$ is an equivalence as well, so $f^*\alpha$ is an equivalence because $\cG$ generates. Since $f^*$ is conservative, $\alpha$ must be an equivalence, and it follows that $f_!\cG$ generates $\cD$. To see the last claim, note that since $f^*$ has a right adjoint, it preserves colimits, hence $f_!$ preserves compact objects and thus $f_!\cG \subseteq \cD^{\omega}$. 
\end{proof}
As they may be of independent interest, the next results are written for comodules over a flat Hopf algebroid even though we will specialize to the discrete case for the main theorem. To this end, we will need a stronger notion of an invariant ideal, which we call strongly invariant. 
\begin{defn}
	Let $(A,\Psi)$ be a flat Hopf algebroid. We call an invariant ideal $I$ strongly invariant if, for $1 \le k \le n$ and every comodule $M$ which is $(x_1,\dots,x_{k-1})$-torsion as an $A$-module, there is a comodule structure on $x_k^{-1}M$ such that the natural homomorphism $M \to x_k^{-1}M$ is a comodule morphism. 
\end{defn}
Note that in the case of a discrete Hopf algebroid this condition is automatic.
\begin{prop}
	Let $(A,\Psi)$ be a flat Hopf algebroid and $\cG_{\Psi}$ be a set of representative of isomorphism classes of dualizable $\Psi$-comodules. Suppose $I \subseteq A$ is a strongly invariant ideal in $A$ generated by a regular sequence $x_1,\dots,x_n$, then
\[
\Stable_{\Psi}^{I-\loc}=\Loc\left (\bigoplus_{i=1}^n x_i^{-1}G/(x_1,\dots, x_{i-1})\colon G\in \cG_{\Psi}\right )
\]
as subcategories of $\Stable_{\Psi}$.
\end{prop}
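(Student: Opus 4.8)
The plan is to prove the two inclusions separately, writing $\mathcal{L}$ for the right-hand side. Since a finite direct sum and its summands generate the same localizing subcategory (each summand being a retract of the sum), we have $\mathcal{L} = \Loc(\{x_i^{-1}G/(x_1,\dots,x_{i-1}) : G \in \cG_{\Psi},\ 1 \le i \le n\})$. I will use freely the formal features of the local duality setup of \cite{bhv}: an object is $I$-local exactly when it is $\Gamma_I$-acyclic, so $\Stable_{\Psi}^{I-\loc}$ is a localizing subcategory (as $\Gamma_I$ is smashing, $L_I$ preserves colimits and cofibers); the single-element functors $\Gamma_{x_i}$ and $L_{x_i}$ are smashing, pairwise commute, and preserve localizing tensor ideals; and $\Gamma_I \simeq \Gamma_{x_1} \circ \cdots \circ \Gamma_{x_n}$. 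The strong invariance hypothesis is what makes the right-hand side meaningful: it guarantees that each localized Koszul quotient $x_i^{-1}G/(x_1,\dots,x_{i-1})$ carries a $\Psi$-comodule structure and coincides with the Bousfield localization $L_{x_i}$ of the $(x_1,\dots,x_{i-1})$-torsion comodule $G/(x_1,\dots,x_{i-1})$.

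For the inclusion $\mathcal{L} \subseteq \Stable_{\Psi}^{I-\loc}$, it suffices to show each generator is $\Gamma_I$-acyclic, since $\Stable_{\Psi}^{I-\loc}$ is localizing. The object $x_i^{-1}G/(x_1,\dots,x_{i-1})$ is $x_i$-local, hence killed by $\Gamma_{x_i}$; as $\Gamma_{x_i}$ is one of the commuting factors of $\Gamma_I$, we get $\Gamma_I(x_i^{-1}G/(x_1,\dots,x_{i-1})) \simeq 0$, so the generator is indeed $I$-local.

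The reverse inclusion is the crux, and I would establish it through a finite torsion filtration rather than by resolving $L_I G$ directly. Given any $I$-local $X$, set $X^{(j)} = \Gamma_{x_j} \circ \cdots \circ \Gamma_{x_1}(X)$, so that $X^{(0)} = X$ and $X^{(n)} = \Gamma_I X \simeq 0$. A short induction shows each $X^{(j)}$ is $(x_1,\dots,x_j)$-torsion: $\Gamma_{x_j}$ makes $X^{(j)}$ into an $x_j$-torsion object, and it preserves the $(x_1,\dots,x_{j-1})$-torsion already present. For each $0 \le j \le n-1$ there is a cofiber sequence $X^{(j+1)} \to X^{(j)} \to L_{x_{j+1}} X^{(j)}$ coming from the torsion--localization fracture for $x_{j+1}$. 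The cofiber $L_{x_{j+1}} X^{(j)}$ is still $(x_1,\dots,x_j)$-torsion and is $x_{j+1}$-local; since the $(x_1,\dots,x_j)$-torsion subcategory is $\Loc(\{G/(x_1,\dots,x_j) : G \in \cG_{\Psi}\})$ and $L_{x_{j+1}}$ preserves colimits, this cofiber lies in $\Loc(\{x_{j+1}^{-1}G/(x_1,\dots,x_j)\}) \subseteq \mathcal{L}$. Finally, descending through the cofiber sequences from $X^{(n)} \simeq 0$ and using that $\mathcal{L}$ is closed under cofibers yields $X^{(n-1)}, \dots, X^{(0)} = X \in \mathcal{L}$.

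The main obstacle is the bookkeeping with the single-element functors: one must check carefully that each $X^{(j)}$ is $(x_1,\dots,x_j)$-torsion, that $L_{x_{j+1}}$ carries the colimit presentation of the torsion subcategory onto exactly the generators $x_{j+1}^{-1}G/(x_1,\dots,x_j)$, and that $\Gamma_I \simeq \Gamma_{x_n} \circ \cdots \circ \Gamma_{x_1}$ so that the filtration terminates at $\Gamma_I X \simeq 0$. All of this rests on the smashing property and mutual commutation of the $\Gamma_{x_i}$ and $L_{x_i}$ and on their preservation of torsion tensor ideals; the one genuinely category-specific input is the strong invariance hypothesis, needed only to know that the localized Koszul quotients are honest objects of $\Stable_{\Psi}$, so that the right-hand side is literally generated inside the category.
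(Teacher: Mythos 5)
Your strategy presupposes that each single element $x_i$ gives rise to smashing, pairwise commuting endofunctors $\Gamma_{x_i}$ and $L_{x_i}$ of $\Stable_{\Psi}$, together with a factorization $\Gamma_I \simeq \Gamma_{x_1}\circ \cdots \circ \Gamma_{x_n}$. That is true over a discrete Hopf algebroid, but it fails for a general flat Hopf algebroid, and this is a genuine gap rather than a bookkeeping issue. The local duality machinery of \cite{bhv} produces $\Gamma_J$ and $L_J$ only when the torsion ideal has a generator that is an actual object of the category, i.e.\ a comodule such as $A/J$; this is exactly what invariance of $J$ buys. For an invariant regular sequence, only the partial ideals $(x_1,\dots,x_k)$ are invariant: for $i \ge 2$ the element $x_i$ is merely invariant modulo $(x_1,\dots,x_{i-1})$ (think of $v_1 \in BP_*$, which is invariant only mod $p$). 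Consequently $A/x_i$ and $x_i^{-1}A$ carry no comodule structure, and multiplication by $x_i$ is not even a self-map of the unit $A$ in $\Stable_{\Psi}$, since $\Hom_{\Comod_{\Psi}}(A,A)$ consists only of the invariant elements of $A$. So $\Gamma_{x_i}$ and $L_{x_i}$ simply do not exist as endofunctors of $\Stable_{\Psi}$ for $i \ge 2$; your filtration $X^{(j)} = \Gamma_{x_j}\circ\cdots\circ\Gamma_{x_1}(X)$ cannot be formed past $j = 1$, and both of your inclusions (the first also invokes the factorization of $\Gamma_I$) collapse. Strong invariance does not rescue this: it is a \emph{relative} condition, supplying a comodule structure on $x_k^{-1}M$ only for comodules $M$ that are already $(x_1,\dots,x_{k-1})$-torsion. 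It makes the generators $x_i^{-1}G/(x_1,\dots,x_{i-1})$ into honest objects, as you observe, but it does not yield global Bousfield (co)localizations at the individual $x_i$, nor their commutation, nor the decomposition of $\Gamma_I$. Repairing your route would require constructing relative functors $L_{x_{j+1}}$ on the $(x_1,\dots,x_j)$-torsion subcategory (functoriality and exactness of $x_{j+1}^{-1}(-)$ there, smashing behaviour, etc.), none of which follows formally from \cite{bhv}.

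The paper sidesteps all of this by never using any functor other than $L_I$ and $\Gamma_I$ attached to the full invariant ideal $I$, which do exist. It first identifies $\Stable_{\Psi}^{I-\loc} = \Loc(L_I\cG_{\Psi})$ via \Cref{lem:pushgen} and the smashing property of $L_I$. Then, writing $G_i = x_i^{-1}G/(x_1,\dots,x_{i-1})$, it uses the fiber sequences of comodules
\[
\xymatrix{G/(x_1,\ldots,x_{i-1}) \ar[r] & G_i \ar[r] & G/(x_1,\ldots,x_{i-1},x_i^{\infty}),}
\]
whose terms are honest objects thanks to the hypotheses, and runs a downward induction on $i$ applying only $L_I$: the base case uses $L_I\bigl(G/(x_1,\ldots,x_{n-1},x_n^{\infty})\bigr) = 0$, the inductive step passes to a colimit to handle $G/(x_1,\ldots,x_{i-1},x_i^{\infty})$, and the observation $\Gamma_I G_i = 0$, hence $G_i \simeq L_I G_i$, closes each step, giving $L_I G \in \Loc\bigl(\bigoplus_{i=1}^n G_i \colon G \in \cG_{\Psi}\bigr)$. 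If you want to keep the one-element-at-a-time fracture picture, you must first build the relative localizations on torsion subcategories — which is where strong invariance would genuinely have to do its work — or else restructure the argument along these lines.
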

\begin{proof}
By \Cref{lem:pushgen} and the fact that $L_I$ is smashing, $\Stable_{\Psi}^{I-\loc}$ is the localizing subcategory in $\Stable_{\Psi}$ generated by $L_I\cG_{\Psi}$. We therefore have to show that 
\[
\Loc(L_I\cG) = \Loc\left (\bigoplus_{i=1}^n x_i^{-1}G/(x_1,\dots, x_{i-1})\colon G\in\cG_{\Psi}\right).
\]
To simplify notation, let $G \in \cG_{\Psi}$ and write $G_i = x_i^{-1}G/(x_1,\dots, x_{i-1})$. First observe that $\Gamma_I G_i = 0$, so $G_i \simeq L_IG_i$ for all $i$. Consequently, $G_i \in \Loc(L_I\cG)$ and it remains to show the other inclusion. To this end, consider the following fiber sequences of comodules
\[
\xymatrix{G/(x_1,\ldots,x_{i-1}) \ar[r] & G_i \ar[r] & G/(x_1,\ldots,x_{i-1},x_i^{\infty})}
\]
Applying $L_I$ to these sequences starting from the one for $i=n$, we  see by downward induction on $i$ that $L_IG/(x_1,\ldots,x_{i-1}) \in \Loc(\bigoplus_{i=1}^n G_i\colon G\in\cG_{\Psi})$: indeed, the case $i=n$ of the claim holds because $L_I(G/(x_1,\ldots,x_{i-1},x_i^{\infty})) = 0$. If $L_IG/(x_1,\ldots,x_{i}) \in \Loc(\bigoplus_{i=1}^n G_i\colon G\in\cG_{\Psi})$, then passing to the colimit shows that so is $L_IG/(x_1,\ldots,x_{i-1},x_{i}^{\infty})$. Since $G_i \simeq L_IG_i$, we deduce from the $i$-th fiber sequence that $L_IG/(x_1,\ldots,x_{i-1}) \in \Loc(\bigoplus_{i=1}^n G_i\colon G\in\cG_{\Psi})$ as well. Consequently, $L_IG \in \Loc(\bigoplus_{i=1}^n G_i\colon G\in\cG_{\Psi})$ for all $G \in \cG_{\Psi}$, as desired. 
\end{proof}

The next result is an immediate consequence of the characterization of $\Lambda^I$-acyclics.

\begin{cor}\label{cor:Lamdalocal}
	Let $(A,\Psi)$ be a flat Hopf algebroid and $\cG_{\Psi}$ be a set of representative of isomorphism classes of dualizable $\Psi$-comodules. Then, for every $M\in\Stable_\Psi$,
	\[
        \Lambda^IM\simeq M \Longleftrightarrow \iHom(x_i^{-1}G/(x_1,\dots, x_{i-1}),M)\simeq0 \text{ for all } 1 \le i \le n \text{ and } G \in \cG_{\Psi}.
    \]
\end{cor}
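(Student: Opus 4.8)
The plan is to identify the condition $\Lambda^I M \simeq M$ with membership in the complete subcategory, translate completeness into orthogonality against the local objects, and then feed in the explicit generators of the local category supplied by the preceding Proposition. Everything is formal once the characterization of $\Lambda^I$-acyclics is in hand.

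First I would note that $\Lambda^I M \simeq M$ holds exactly when $M$ lies in the essential image of the completion localization, i.e. when $M \in \Stable_{\Psi}^{I-\mathrm{comp}}$. Since $\Lambda^I$ is left adjoint to the fully faithful inclusion $\iota_{\mathrm{comp}}$, this essential image is the right-orthogonal complement of its kernel, so the problem reduces to describing the $\Lambda^I$-acyclic objects. Combining the adjunction $\Hom_{\Stable_{\Psi}}(\Gamma_I X,N)\simeq \Hom_{\Stable_{\Psi}}(X,\Lambda^I N)$ with the fact that $\Gamma_I$ is smashing gives, by Yoneda, a natural equivalence $\Lambda^I N \simeq \iHom(\Gamma_I A, N)$. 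Hence $\Lambda^I N \simeq 0$ if and only if $\Hom(X\otimes\Gamma_I A,N)\simeq 0$ for all $X$, i.e. if and only if $N$ is right-orthogonal to the tensor ideal $\Stable_{\Psi}^{I-\mathrm{tors}}=\Loc^{\otimes}(\Gamma_I A)$. Thus the $\Lambda^I$-acyclics are precisely the local objects $\Stable_{\Psi}^{I-\loc}$, and $M$ is complete if and only if $M\in(\Stable_{\Psi}^{I-\loc})^{\perp}$.

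The second ingredient is the preceding Proposition, which identifies $\Stable_{\Psi}^{I-\loc}$ with the localizing subcategory generated by the objects $x_i^{-1}G/(x_1,\dots,x_{i-1})$ for $1\le i\le n$ and $G\in\cG_{\Psi}$. For the reverse implication I would observe that the full subcategory of $N$ with $\iHom(N,M)\simeq 0$ is localizing, since $\iHom(-,M)$ carries colimits in its first variable to limits; as it contains the displayed generators it therefore contains all of $\Stable_{\Psi}^{I-\loc}$, and taking global sections then yields $\Hom(N,M)\simeq 0$ for every local $N$, so $M$ is complete. For the forward implication I would use that $L_I$ is smashing, which makes $\Stable_{\Psi}^{I-\loc}$ a tensor ideal: if $M$ is complete and $N$ is one of the generators, then $\Hom\bigl(X\otimes N,M\bigr)\simeq 0$ for all $X$ because $X\otimes N\in\Stable_{\Psi}^{I-\loc}$, whence $\iHom(N,M)\simeq 0$. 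Together these give
\[
\Lambda^I M\simeq M \Longleftrightarrow \iHom\bigl(x_i^{-1}G/(x_1,\dots,x_{i-1}),M\bigr)\simeq 0 \text{ for all } 1\le i\le n,\ G\in\cG_{\Psi},
\]
as claimed.

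The step I expect to demand the most care is the final paragraph: reconciling the internal-hom vanishing appearing in the statement with the mapping-spectrum orthogonality that intrinsically defines the complete objects. The \emph{forward} direction genuinely needs the tensor-ideal property of $\Stable_{\Psi}^{I-\loc}$ (a consequence of $L_I$ being smashing), whereas the \emph{reverse} direction only needs that the $\iHom(-,M)$-kernel is localizing and contains the explicit generators; keeping straight which fact powers which implication is the one nonformal point. Everything else is a direct consequence of the local-duality adjunctions recalled in \Cref{sec:torcomplete}.
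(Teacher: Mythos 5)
Your proof is correct and is essentially the paper's argument: the paper deduces this corollary as an ``immediate consequence'' of the preceding proposition identifying $\Stable_{\Psi}^{I-\loc}$ (equivalently, the $\Lambda^I$-acyclics) with $\Loc\bigl(x_i^{-1}G/(x_1,\dots,x_{i-1})\colon G\in\cG_{\Psi}\bigr)$, together with the standard characterization of complete objects as the right-orthogonal complement of the local ones. Your write-up simply makes explicit the details the paper leaves implicit --- the identification $\Lambda^I(-)\simeq\iHom(\Gamma_IA,-)$, and the passage between $\Hom$-orthogonality and $\iHom$-vanishing via the tensor-ideal (smashing) property of $\Stable_{\Psi}^{I-\loc}$.
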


In \cite{bousfield_homotopy_1972}, Bousfield and Kan define an abelian group $A$ to be $\Ext$-$p$ complete if the map $A\to \Ext^1_\Z(\Z/p^\infty,A)$ is an isomorphism. They prove in \cite[VI.~3.4(i)]{bousfield_homotopy_1972} that this is equivalent to $\Hom_\Z(\Z[p^{-1}],A)=\Ext_{\Z}^1(\Z[p^{-1}],A)=0$. The following result is thus the natural generalization of $\Ext$-$p$ completeness for modules over any commutative ring.

\begin{thm}[$\Ext$-$I$ completeness criterion]\label{thm:Lcompletecriterion}
    Let $A$ be a commutative ring and $I\subseteq A$ an ideal generated by a regular sequence $x_1, x_2, \dots ,x_n$. If $M$ is an $A$-module, then
    \[
        M \text{ is $L$-complete } \Longleftrightarrow \Ext_A^q(x_i^{-1}A/(x_1,\dots, x_{i-1}),M)=0 \text{ for all }1 \le i \le n \text{ and all } q \ge 0.
    \]
\end{thm}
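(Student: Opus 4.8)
The plan is to specialize \Cref{cor:Lamdalocal} to the discrete Hopf algebroid $(A,A)$ and then to convert the vanishing of internal hom-objects into the vanishing of $\Ext$-groups. Throughout I identify $\Stable_A$ with $\cD(A)$ via \Cref{prop:stableprops}, so that $\iHom$ agrees with $\RHom_A$ and the set $\cG_A$ of dualizable comodules consists of the finitely generated projective $A$-modules. Write $S_i = x_i^{-1}A/(x_1,\dots,x_{i-1})$, a discrete module.

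The first and crucial step is the bridge that, for a discrete module $M$, one has $\Lambda^I M \simeq M$ in $\cD(A)$ if and only if $M$ is $L$-complete. By \Cref{thm:gm_localhomology} we have $\Lambda^I \simeq \mathbf{L}C^I$ and $H_s(\Lambda^I M) \cong L_s M$, so $\Lambda^I M \simeq M$ forces $L_0 M \cong M$ via $\eta$ and $L_s M = 0$ for $s > 0$. The forward implication and the identity $L_0 M \cong M$ in the converse are thus immediate from the definition of $L$-completeness; the real content is that $L_s M = 0$ for $s>0$ whenever $M$ is $L$-complete. This I expect to be the main obstacle. For a single regular element it is clean: since $L^{(x)}_s = 0$ for $s \ge 2$, the fibre $F$ of the completion map $M \to \Lambda^{(x)}M$ has $H_s F \cong L^{(x)}_{s+1}M = 0$ for $s \ge 1$, while $\eta$ being an isomorphism kills $H_0 F$ and $H_{-1}F$, whence $F \simeq 0$. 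For a general regular sequence I would either iterate this along $\Lambda^{(x_n)}\cdots\Lambda^{(x_1)}$ or, more safely, invoke the established identification of discrete $L$-complete modules with derived $I$-complete objects from the complete-module theory of Hovey--Strickland and Porta--Shaul--Yekutieli; keeping the iteration compatible with the filtration by the $x_i$ is the delicate point.

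With the bridge in hand, \Cref{cor:Lamdalocal} reads: for $M \in \Stable_A$ the condition $\Lambda^I M \simeq M$ is equivalent to $\iHom(x_i^{-1}G/(x_1,\dots,x_{i-1}), M) \simeq 0$ for all $1 \le i \le n$ and all $G \in \cG_A$. I then reduce the range of $G$ to $G = A$. The full subcategory $\{\, N \in \cD(A) : \iHom(N,M) \simeq 0 \,\}$ is localizing, since $\RHom_A(-,M)$ is exact and carries colimits to limits. As each $G \in \cG_A$ is a perfect complex it lies in $\Thick(A)$, so $x_i^{-1}G/(x_1,\dots,x_{i-1}) = G \otimes S_i \in \Thick(S_i)$. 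Hence $\iHom(S_i, M) \simeq 0$ for all $i$ already implies $\iHom(G \otimes S_i, M) \simeq 0$ for every $G$ and $i$, the reverse implication being trivial since $A \in \cG_A$. The whole condition therefore collapses to $\iHom(S_i, M) \simeq 0$ for $1 \le i \le n$.

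Finally I translate into $\Ext$-vanishing. Both $S_i$ and $M$ are discrete, so $\RHom_A(S_i, M)$ is coconnective with homotopy group $\Ext_A^q(S_i, M)$ in homological degree $-q$ for $q \ge 0$; thus $\iHom(S_i, M) \simeq 0$ if and only if $\Ext_A^q(S_i, M) = 0$ for all $q \ge 0$. Chaining the three steps gives that $M$ is $L$-complete precisely when $\Ext_A^q(x_i^{-1}A/(x_1,\dots,x_{i-1}), M) = 0$ for all $1 \le i \le n$ and all $q \ge 0$, as claimed. Every step after the first paragraph is a formal manipulation of localizing subcategories and of the standard $t$-structure on $\cD(A)$; the only genuine difficulty lies in the bridge identifying $L$-completeness with $\Lambda^I M \simeq M$.
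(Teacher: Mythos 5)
Your proposal follows the same route as the paper: specialize \Cref{cor:Lamdalocal} to the discrete Hopf algebroid, reduce the set of test objects from all of $\cG_A$ to $\{A\}$, identify $L$-completeness of a discrete module $M$ with $\Lambda^I M \simeq M$, and then apply homology to turn $\iHom$-vanishing into $\Ext$-vanishing. Your generator reduction via thick subcategories and the final translation into $\Ext$-groups are correct (the paper compresses them into single sentences), and you have correctly isolated the bridge as the one step with real content; the paper runs that bridge through the collapse of the spectral sequence of \Cref{prop:homology_ss} together with \Cref{thm:gm_localhomology}(2).

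However, the argument you actually wrote out for the bridge in the case of one regular element is circular. In the long exact sequence of the fibre sequence $F \to M \to \Lambda^{(x)}M$, the hypothesis that $\eta\colon M \to L_0M$ is an isomorphism gives $H_{-1}F \cong \operatorname{coker}(\eta) = 0$ and shows that the map $H_0F \to H_0M = M$ is zero (its image is $\ker\eta$); it does \emph{not} kill $H_0F$. Exactness instead yields $H_0F \cong L_1^{(x)}M$, which is exactly the group whose vanishing you are trying to establish, so $F \simeq 0$ does not follow, and iterating this over a regular sequence inherits the same circularity. What is genuinely needed is the classical fact that the higher derived functors of completion vanish on $L$-complete modules, equivalently that $L_sL_0 = 0$ for $s > 0$ (so that $M \cong L_0M$ forces $L_sM \cong L_sL_0M = 0$); this is part of the Greenlees--May/Hovey--Strickland theory (\cite[Sec.~4]{gm_localhomology}, cf.\ \Cref{prop:lcomplete}, and \cite[App.~A]{hovey_morava_1999}; see \cite{psy} for weakly proregular ideals). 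Your ``safer'' fallback of citing that literature is exactly the right move --- and it is also what the paper's one-line bridge implicitly rests on --- but the self-contained long-exact-sequence argument you propose to iterate is not a proof of it.
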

\begin{proof}
First note that we may take $\cG_{A} = \{A\}$, because dualizable $A$-modules are finitely generated and projective. By the spectral sequence of \Cref{prop:homology_ss}, a module $M$ is $L$-complete if and only if $M$ is $\Lambda^I$-local as a complex concentrated in degree zero. But by \cref{cor:Lamdalocal} in the case of a discrete Hopf algebroid, $M$ is $\Lambda^I$-local if and only if $\iHom(x_i^{-1}A/(x_1,\dots, x_{i-1}),M)\simeq 0$ for all $i=1,\dots,n$. The result thus follows after applying homology to the latter equivalence. 
\end{proof}
\begin{rem}
	Since the projective dimension of $x_i^{-1}A/(x_1,\ldots,x_{i-1})$ as an $A$-module is at most $i$, it suffices to check the vanishing of $\Ext^q_A(x_i^{-1}A/(x_1,\dots, x_{i-1}),M)$ for $0 \le q \le i$. 
\end{rem}

\subsection{Derived completion for comodules}

Suppose now that $(A,\Psi)$ is a flat Hopf algebroid and let $I \subseteq A$ be a finitely generated ideal generated by an invariant regular sequence. The following is inspired by local homology in the case of a discrete Hopf algebroid.
\begin{defn}
	Let $M\in\Stable_\Psi$. We define the $s$-th local homology of $M$ to be
	\[\Lambda^I_s(M)=H_s(\Lambda^IM).\]
\end{defn}
In this section, we will construct a Grothendieck type spectral sequence calculating local homology for any $\Psi$-comodule $M$. However, the abutment of this spectral sequence will not be related to the left or right derived functors of comodule completion $C_{\Psi}^I$ as studied in \Cref{sec:completion}. To see this we first need the following lemma, which shows that the right derived functors of the completion functor on comodules vanish under mild conditions on the ring $A$:

\begin{lem}\label{lem:comodvanishlim}
If $A$ is an integral domain and $J \in \Comod_{\Psi}$ is an injective comodule, then $\ilim_{\Psi,k} A/I^k \otimes J = 0$. In particular, all right derived functors of $\ilim_{\Psi,k} A/I^k \boxtimes - \colon \Comod_{\Psi} \to \Comod_{\Psi}$ are zero. 
\end{lem}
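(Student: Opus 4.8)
The plan is to establish the displayed vanishing $\psilim(A/I^k \boxtimes J)=0$; the statement about derived functors is then immediate. Indeed, the right derived functors of the additive functor $C^I_\Psi=\psilim(A/I^k\boxtimes-)\colon\Comod_\Psi\to\Comod_\Psi$ are computed by applying it to an injective resolution, so if $C^I_\Psi$ annihilates every injective comodule, it sends every injective resolution to the zero complex and hence all of its right derived functors (including the zeroth) vanish. I read the tensor in the first display as the underived one; this is forced, since for a discrete Hopf algebroid the derived completion $\Lambda^I J$ of an injective comodule need not vanish --- e.g.\ $\Lambda^{(p)}(\Z/p^\infty)\not\simeq 0$ over $\Z$.

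First I would reduce to extended comodules. The forgetful functor $\epsilon_*\colon\Comod_\Psi\to\Mod_A$ is exact and has the extended-comodule functor $\Psi\boxtimes-$ as a right adjoint; consequently $\Psi\boxtimes E$ is an injective comodule whenever $E$ is an injective $A$-module, and every injective comodule $J$ is a retract of such a $\Psi\boxtimes E$ (see \cite{hovey_htptheory}). Since $C^I_\Psi$ is additive it preserves retracts, so it suffices to treat $J=\Psi\boxtimes E$.

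For such $J$ I would use the projection formula $M\boxtimes(\Psi\boxtimes E)\cong\Psi\boxtimes(\epsilon_*M\boxtimes E)$ for extended comodules, which is natural in $M$ (see \cite{ravenel_86}). Taking $M=A/I^k$ identifies the tower $(A/I^k\boxtimes J)_k$ with the tower of extended comodules $(\Psi\boxtimes(A/I^k\boxtimes E))_k$, naturality in $k$ ensuring this is an isomorphism of inverse systems. Now \eqref{eq:extendedpsilim} computes the comodule limit of a tower of extended comodules as the extension of the underlying module limit, giving
\[
\psilim(A/I^k\boxtimes J)\;\cong\;\Psi\boxtimes\lim_k\bigl(A/I^k\boxtimes E\bigr)\;=\;\Psi\boxtimes C^I_A(E).
\]

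It remains to prove $C^I_A(E)=\lim_k E/I^kE=0$, and this is exactly where the integral-domain hypothesis is used: an injective module over a domain is divisible, so the first element $x_1$ of the regular sequence generating $I$ (nonzero because $A$ is a domain) acts surjectively on $E$, whence $IE=E$ and therefore $E/I^kE=0$ for all $k\ge1$. Thus $C^I_A(E)=0$ and $\psilim(A/I^k\boxtimes J)=\Psi\boxtimes0=0$. The individual steps are routine; the point requiring the most care is the reduction to extended comodules together with the naturality of the projection formula in the tower index, which is what allows the comodule inverse limit to be pushed through the extension functor via \eqref{eq:extendedpsilim}.
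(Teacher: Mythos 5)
Your proof is correct and follows essentially the same route as the paper: reduce to an extended comodule $\Psi \boxtimes E$ on an injective $A$-module via a retract, push the comodule limit through \eqref{eq:extendedpsilim} (together with the projection formula), and kill the resulting module-level limit using the integral-domain hypothesis. The only cosmetic difference is that you prove $\lim_k E/I^kE = 0$ directly from divisibility of injectives over a domain, whereas the paper cites the corresponding lemma of Greenlees--May, whose proof is exactly that argument.
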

\begin{proof}
Since any injective comodule is a retract of an extended comodule on an injective $A$-module $J'$ \cite[Lem.~2.1(c)]{hs_localcohom}, we can assume without loss of generality that $J = \Psi \boxtimes J'$. Hence, by \eqref{eq:extendedpsilim} there is an isomorphism
\[ 
\ilim_{\Psi,k} A/I^k \otimes J \cong \Psi \boxtimes \lim_k A/I^k \boxtimes J'
\]
and we conclude by \cite[Lem.~5.1]{gm_localhomology} which gives that $\lim_k A/I^k \boxtimes J'=0$.
\end{proof}
\begin{rem}\label{rem:negativegrouops}
Recall that \Cref{thm:gm_localhomology}(1) says that, for $A$-modules, the homology groups of $\Lambda^I$ compute the left derived functors of completion. For an arbitrary Hopf algebroid, \Cref{ex:klim} below shows that, unlike the case of $A$-modules, there exist negative local homology groups. This, along with \Cref{lem:comodvanishlim}, implies that for comodules the homology groups of $\Lambda^I$ cannot, in general, be given by the partial left or right derived functors of $C^I_{\Psi}$. 
\end{rem}
\begin{prop}\label{prop:comodulecssgrothendieck}
Suppose $(A,\Psi)$ is an Adams Hopf algebroid, $I$ is a finitely generated ideal of $A$ generated by an invariant regular sequence, and $M \in D_{\Psi}$. There is a conditionally and strongly convergent spectral sequence of comodules
\begin{equation}\label{eq:comodulecompletiongrothendieckspectralsequence}
E_2^{p,q} \cong \ilim^p_{\Psi,k}\Tor^{\Psi}_q(A/I^k,M) \implies \Lambda^I_{q-p}(M).
\end{equation}
\end{prop}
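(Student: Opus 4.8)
The plan is to exhibit the spectral sequence as the homotopy-limit spectral sequence of the tower $(A/I^k\otimes M)_k$, identify its $E_2$-page, and then analyze convergence. First I would invoke \Cref{lem:compformula} to write $\Lambda^I M\simeq \lim_{\Psi,k}(A/I^k\otimes M)$, the homotopy limit in $\Stable_{\Psi}$ of the tower $X_\bullet=(A/I^k\otimes M)_k$, where $M\in\D(\Psi)$ is regarded as an object of $\Stable_{\Psi}$ via the fully faithful functor $\iota_*$ of \Cref{prop:stableprops}. Computing the local homology comodules $\Lambda^I_*(M)=H_*(\Lambda^I M)$ thus reduces to computing the homology of a homotopy limit of a tower.

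Next I would apply the standard conditionally convergent spectral sequence associated with the homotopy limit of a tower in a stable $\infty$-category equipped with a $t$-structure whose homology functors $H_q$ take values in $\Comod_{\Psi}$. Concretely this arises from the fiber sequence $\lim_k X_k\to\prod_k X_k\xrightarrow{1-\mathrm{shift}}\prod_k X_k$, and has the form $E_2^{p,q}=\lim^p_k H_q(X_k)\Rightarrow H_{q-p}(\lim_k X_k)$, where $\lim^p$ denotes the $p$-th right derived functor of the inverse limit of the associated tower of comodules.

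The entries are then identified as follows. For each $k$ one has $H_q(X_k)=H_q(A/I^k\otimes M)=\Tor^{\Psi}_q(A/I^k,M)$, so the relevant towers are $(\Tor^{\Psi}_q(A/I^k,M))_k$. Their derived inverse limits appearing on the $E_2$-page coincide with the functors $\ilim^p_{\Psi,k}$, because derived limits computed in $\D(\Psi)$ agree with those computed in $\Stable_{\Psi}$ by \cite[Lem.~4.26]{bhv}. This yields precisely the asserted $E_2$-page $E_2^{p,q}\cong\ilim^p_{\Psi,k}\Tor^{\Psi}_q(A/I^k,M)$ with abutment $\Lambda^I_{q-p}(M)$.

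Conditional convergence is automatic from the homotopy-limit construction. The main obstacle is strong convergence. Since the forgetful functor $\Comod_{\Psi}\to\Mod_A$ does not preserve products, the derived inverse limits $\ilim^p_{\Psi,k}$ can be non-zero for arbitrarily large $p$ --- this is the same stackiness responsible for the negative local homology groups of \Cref{rem:negativegrouops} --- so the spectral sequence is a right half-plane spectral sequence, supported in $p\ge 0$, with only finitely many differentials entering each bidegree but infinitely many potentially exiting it. To upgrade conditional to strong convergence I would verify Boardman's criterion that the derived $E_\infty$-term vanishes, exploiting that the filtration on each $\Lambda^I_{n}(M)$ is complete and Hausdorff as it is induced by a genuine homotopy limit. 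Establishing the vanishing of this derived $E_\infty$-term in the presence of infinitely many non-vanishing columns is the delicate point, and is exactly where the argument has no counterpart in the module-theoretic situation of \Cref{prop:homology_ss}.
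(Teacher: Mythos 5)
Your overall plan---realize $\Lambda^I M$ as the limit of the tower $(A/I^k\otimes M)_k$ via \Cref{lem:compformula} and run the associated tower spectral sequence, using \cite[Lem.~4.26]{bhv} and \Cref{prop:stableprops} to pass between $\D(\Psi)$ and $\Stable_\Psi$---matches the paper's, but two steps need repair. First, the concrete construction you propose does not produce the claimed $E_2$-page in this setting: the fiber sequence $\lim_k X_k\to\prod_k X_k\to\prod_k X_k$ expresses $H_*(\lim_k X_k)$ in terms of $H_*(\prod_k X_k)$, and in $\Comod_\Psi$ (hence in $\D(\Psi)$ and $\Stable_\Psi$) infinite products are \emph{not} exact, so $H_q\bigl(\prod_k X_k\bigr)\neq\prod_k H_q(X_k)$. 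This failure is precisely the stackiness you invoke elsewhere, and it is why $\ilim^p_{\Psi,k}$ can be nonzero for $p\ge 2$---something no two-term fiber sequence can detect. The paper instead applies the hyperderived-functor (Cartan--Eilenberg) spectral sequence of \cite[5.7.9, Cor.~10.5.7]{weibel_homological} to the left exact functor $\lim\colon\Comod_\Psi^{\N}\to\Comod_\Psi$ and the tower of complexes $(A/I^k\otimes M)_k$, which yields $E_2^{p,q}\cong\ilim^p_{\Psi,k}\Tor_q^\Psi(A/I^k,M)$ directly, with abutment identified via \Cref{lem:compformula} and \cite[Lem.~4.26]{bhv}.

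Second, and more seriously, you do not prove strong convergence, which is part of the statement: you explicitly leave the vanishing of the derived $E_\infty$-term open as ``the delicate point.'' In fact it follows in one line from the regularity hypothesis---which your proposal never uses---and your diagnosis of the difficulty is inverted. Since $I$ is generated by a regular sequence $x_1,\dots,x_n$, each $A/I^k$ has flat dimension at most $n$, uniformly in $k$ (filter $A/I^k$ by the subquotients $I^j/I^{j+1}$, which are free $A/I$-modules, and resolve $A/I$ by its Koszul complex); hence $\Tor_q^\Psi(A/I^k,M)$, and with it $E_2^{p,q}$, vanishes for $q$ above a fixed bound. This gives a horizontal vanishing line at $E_2$. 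Because every differential $d_r\colon E_r^{p,q}\to E_r^{p+r,q+r-1}$ strictly raises $q$, while only finitely many differentials enter any bidegree (as $p\ge 0$), every bidegree stabilizes at a finite page; therefore the derived $E_\infty$-term vanishes and conditional convergence upgrades to strong convergence by Boardman. Your worry about ``infinitely many non-vanishing columns''---nonvanishing of $\ilim^p_{\Psi,k}$ for arbitrarily large $p$---is a real phenomenon but irrelevant to convergence: a vanishing line in the $q$-direction kills all differentials in and out of a fixed bidegree from some page on, no matter how many values of $p$ support nonzero groups.
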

\begin{proof}
Let $\cA_1$ and $\cA_2$ be abelian categories and assume that $\cA_1$ has enough injective objects. Recall that there is a conditionally convergent spectral sequence
\[
E_2^{p,q} \cong \mathbf{R}^pF(H^q(X)) \implies H^{p+q}(\mathbf{R}F(X))
\]
for any left exact functor $F\colon \cA_1 \to \cA_2$ and $X \in \Ch(\cA_1)$, see~\cite[5.7.9, Cor.~10.5.7]{weibel_homological}. We apply this spectral sequence to the functor $\lim\colon \Comod_{\Psi}^{\N} \to \Comod_{\Psi}$ and $X = (A/I^k \otimes M)_{k \in \N} \in \D(\Psi)^{\N}$, the category of towers of chain complexes of $\Psi$-comodules up to quasi-isomorphism. Switching the grading so that the spectral sequence converges to $H_{q-p}\mathbf{R}F(X)$, 
the $E_2$-page is readily identified as 
\[ 
E_2^{p,q} \cong \ilim^p_{\Psi,k}\sh_q(A/I^k \otimes M) \cong \ilim^p_{\Psi,k}\Tor^{\Psi}_q(A/I^k,M).
\]
This spectral sequence then converges conditionally to 
\[ 
\sh_{q-p}(\mathbf{R}\ilim_{\Psi,k} (A/I^k \otimes M)),
\]
which is $\Lambda^I_{q-p}(M)$ by \Cref{lem:compformula} and \cite[Lem.~4.26]{bhv}. The latter, which says that $\ilim_{\Psi,k} (A/I^k \otimes M)$ can be unambiguously interpreted as taken in either $\D(\Psi)$ or $\Stable_\Psi$, is not stated in the generality we need here, but it holds under our hypotheses in light of \Cref{prop:stableprops}. Finally, since $I$ is generated by a finite regular sequence, this spectral sequence has a horizontal vanishing line at $E_2$ and hence converges strongly. 
\end{proof}

Since $\ilim^p=0$ for $p > 1$ for any discrete Hopf algebroid, this spectral sequence degenerates to a short exact sequence. Hence we recover as a special case:

\begin{cor}[Greenlees--May]\label{cor:gmses}
If $M \in  \Mod_A$, then there is a short exact sequence 
\[\xymatrix{0 \ar[r] & \lim^1_k \Tor_{s+1}^A(A/I^k,M) \ar[r] & L_sM \ar[r] & \lim_k \Tor_s^A(A/I^k,M) \ar[r] & 0}\]
for any $s \ge 0$. In particular, there exists a natural epimorphism $L_0M \to C^I(M)$.
\end{cor}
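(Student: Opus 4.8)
The plan is to derive \Cref{cor:gmses} as the degenerate case of the Grothendieck-type spectral sequence in \Cref{prop:comodulecssgrothendieck}, specialized to the discrete Hopf algebroid $(A,A)$. In this case $\Comod_A = \Mod_A$ and $\Stable_A \simeq \cD(A)$ by \Cref{prop:stableprops}, so the comodule inverse limit $\ilim_{\Psi,k}$ coincides with the ordinary derived inverse limit $\lim_k$ of $A$-modules, and $\Tor^{\Psi}_q = \Tor^A_q$. Thus the spectral sequence of \eqref{eq:comodulecompletiongrothendieckspectralsequence} reads
\[
E_2^{p,q} = {\lim_k}^p \Tor_q^A(A/I^k,M) \implies \Lambda^I_{q-p}(M).
\]
By \Cref{thm:gm_localhomology}(1) the abutment $\Lambda^I_{q-p}(M)$ may be identified with $L_{q-p}M$, the left derived functors of $I$-adic completion. (One should note here that $I$ is generated by a finite regular sequence, hence weakly proregular, so \Cref{thm:gm_localhomology} applies.)

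First I would observe that the category $\Mod_A$ is an ordinary (discrete) abelian category, for which the higher derived functors of the inverse limit over $\N$ vanish above degree one: $\lim_k^p = 0$ for all $p > 1$. This is the key structural input that distinguishes the discrete case from a general Hopf algebroid, where $\ilim^p$ need not vanish for $p \geq 2$. Consequently the only potentially nonzero columns of the $E_2$-page are $p = 0$ and $p = 1$, and the spectral sequence is concentrated in two adjacent columns.

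A two-column spectral sequence collapses to short exact sequences with no room for higher differentials: all differentials $d_r$ for $r \geq 2$ shift the column index by $r \geq 2$ and hence vanish, so $E_2 = E_\infty$. Fixing the total degree $q - p = s$, the abutment $L_s M$ admits a two-step filtration whose associated graded pieces are $E_\infty^{0,s} = \lim_k \Tor_s^A(A/I^k,M)$ and $E_\infty^{1,s+1} = \lim_k^1 \Tor_{s+1}^A(A/I^k,M)$. Writing out the resulting extension, with the $\lim^1$-term as the subobject, yields precisely the short exact sequence
\[
\xymatrix{0 \ar[r] & {\lim_k^1} \Tor_{s+1}^A(A/I^k,M) \ar[r] & L_sM \ar[r] & {\lim_k} \Tor_s^A(A/I^k,M) \ar[r] & 0.}
\]
For the final assertion, I would specialize to $s = 0$: since $\Tor_0^A(A/I^k,M) = A/I^k \otimes M$ and $\lim_k (A/I^k \otimes M) = C^I(M)$ by definition, the surjection $L_0 M \to \lim_k \Tor_0^A(A/I^k,M)$ is exactly the natural epimorphism $L_0 M \to C^I(M)$.

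The main obstacle is essentially bookkeeping rather than conceptual: one must be careful that the specialization of \Cref{prop:comodulecssgrothendieck} to the discrete case genuinely recovers the \emph{ordinary} derived inverse-limit functors and the \emph{ordinary} $\Tor$ groups, which rests on the identification $\Stable_A \simeq \cD(A)$ and on the compatibility of $\ilim_{\Psi,k}$ with $\lim_k$ guaranteed by \cite[Lem.~4.26]{bhv}. One should also confirm that the grading/filtration convention places the $\lim^1$-term as the subobject and the $\lim$-term as the quotient, matching the classical Greenlees--May statement; this is fixed by the regrading to total degree $q-p$ used in the proof of \Cref{prop:comodulecssgrothendieck}.
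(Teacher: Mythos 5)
Your proposal is correct and takes essentially the same route as the paper: the paper also obtains \Cref{cor:gmses} by specializing the spectral sequence of \Cref{prop:comodulecssgrothendieck} to a discrete Hopf algebroid, where $\lim^p_k = 0$ for $p>1$ forces the two-column collapse into the stated short exact sequence (with the identification $\Lambda^I_s(M)\cong L_sM$ from \Cref{thm:gm_localhomology} left implicit there, which you rightly make explicit).
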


By the Artin--Rees lemma, local homology is concentrated in degree $0$ for finitely generated modules over Noetherian rings, where it simply agrees with $I$-adic completion, see \cite[Prop. A.4]{hovey_morava_1999}. Although this is no longer the case for comodules, the next proposition provides an appropriate comodule analogue where one needs to use derived limits to capture local homology on the nonzero degrees.
\begin{prop}\label{prop:fgvanishing}
Suppose that $A$ is Noetherian. For $M \in \Comod_{\Psi}^{\omega}$ the spectral sequence \eqref{eq:comodulecompletiongrothendieckspectralsequence} collapses, yielding an isomorphism 
\[ 
\xymatrix{\Lambda_{-s}^{I}M \ar[r]^-{\simeq} & {\ilim}_{\Psi,k}^{s}M \boxtimes A/I^k}
\]
of comodules for all $s\ge 0$.
\end{prop}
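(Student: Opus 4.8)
The plan is to reduce the collapse to a single vanishing statement on the $E_2$-page: namely that $E_2^{p,q}=\ilim^p_{\Psi,k}\Tor^{\Psi}_q(A/I^k,M)=0$ whenever $q>0$. Granting this, the rest is formal. The differentials $d_r$ run from $E_r^{p,q}$ to $E_r^{p+r,q-r+1}$, so once everything outside the row $q=0$ vanishes, every differential into or out of that row is zero for $r\ge 2$ (the target row has $q<0$, where the $\Tor$-groups vanish), and there are no extension problems in any fixed total degree because only one column contributes. Since the spectral sequence of \Cref{prop:comodulecssgrothendieck} converges strongly, this yields $E_2^{s,0}=E_\infty^{s,0}\cong \Lambda^I_{-s}M$, that is, an isomorphism of comodules $\Lambda_{-s}^{I}M\cong \ilim_{\Psi,k}^{s}(A/I^k\boxtimes M)$ for all $s\ge 0$, natural because the whole spectral sequence is one of comodules.

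To prove the vanishing for $q>0$, I would first descend to $A$-modules. Since $\Psi$ is flat, the forgetful functor $\epsilon_*$ is exact and strong symmetric monoidal, hence commutes with the derived tensor product, giving $\epsilon_*\Tor^{\Psi}_q(A/I^k,M)\cong \Tor^A_q(A/I^k,\epsilon_*M)$. Because $M$ is compact and $(A,\Psi)$ is an Adams Hopf algebroid, $M$ is a retract of a finite colimit of dualizable comodules, so $\epsilon_*M$ is finitely generated over the Noetherian ring $A$. The Artin--Rees lemma then shows that the tower $\{\Tor^A_q(A/I^k,\epsilon_*M)\}_k$ is pro-zero for every $q>0$: the higher $\Tor$ of a finitely generated module against the $\{A/I^k\}$ form a pro-trivial system (equivalently, one may replace $\{A/I^k\}$ by the cofinal Koszul tower on $x_1^k,\dots,x_n^k$ and invoke weak proregularity of the regular sequence $x_1,\dots,x_n$).

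Having the module-level pro-triviality, I would transfer it upstairs and then annihilate the derived limits. For a cofinal set of indices the underlying $A$-module transition maps of $\{\Tor^{\Psi}_q(A/I^k,M)\}_k$ vanish; since $\epsilon_*$ is faithful, the corresponding comodule transition maps vanish too, so the comodule tower is itself pro-zero for $q>0$. Passing to a cofinal subtower with zero transition maps and computing $\mathbf R\lim$ in $\Stable_\Psi$ as $\fib\bigl(1-\mathrm{shift}\colon \prod_k\to\prod_k\bigr)$ --- using \cite[Lem.~4.26]{bhv} to interpret $\ilim_{\Psi,k}$ unambiguously in $\D(\Psi)$ or $\Stable_\Psi$ --- forces $\ilim^p_{\Psi,k}\Tor^{\Psi}_q(A/I^k,M)=0$ for all $p$ and all $q>0$, completing the argument. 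This transfer is exactly where the comodule case genuinely departs from the module case, and it is the main obstacle: over $A$ one has $\ilim^p=0$ for $p\ge 2$, so the Mittag--Leffler condition ($\lim^1=0$) already suffices to collapse the off-diagonal, whereas for comodules $\ilim^p$ may be nonzero in \emph{all} degrees, so I must use the full pro-triviality of the $\Tor$-tower (not merely vanishing of $\lim$ and $\lim^1$) to kill every higher derived limit at once. Establishing this pro-triviality via Artin--Rees and reconciling it with the comodule-theoretic derived limit is the crux; everything else is spectral-sequence bookkeeping.
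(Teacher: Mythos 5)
Your proposal is correct and takes essentially the same approach as the paper: both reduce the collapse to the pro-triviality of the tower $(\Tor_t^{\Psi}(M,A/I^k))_k$ for $t \ge 1$, which rests on Artin--Rees over the Noetherian ring $A$ applied to the finitely generated underlying module of the compact comodule $M$, and both then use pro-triviality to kill all derived functors of $\ilim_{\Psi,k}$ at once (rather than just $\lim$ and $\lim^1$). The only differences are in packaging: where you descend to $A$-modules explicitly via $\epsilon_*$, lift pro-triviality back by faithfulness, and annihilate the derived limits by the $\fib(1-\mathrm{shift})$ argument on a cofinal subtower, the paper instead cites Lurie \cite[Not.~5.2.18]{dag12} for the pro-triviality of the left derived functors of $M \mapsto (M \boxtimes A/I^k)_k$ on compact comodules and Jannsen \cite[Lem.~1.11]{jannsen_cec} for the fact that pro-trivial towers have vanishing derived limits in all degrees.
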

\begin{proof}
It is enough to show that
\[
\ilim_{\Psi,k}^s \Tor_t^{\Psi}(M, A/I^k) = 0
\]
for all $s \ge 0$ and all $t \ge 1$, so that the strongly convergent spectral sequence \eqref{eq:comodulecompletiongrothendieckspectralsequence} collapses. In fact, we will prove the stronger claim that the tower $(\Tor_t^{\Psi}(M, A/I^k))_k$ is pro-trivial for all $t \ge 1$; see \cite[Lem.~1.11]{jannsen_cec} for a proof that this implies that all derived functors of $\ilim$ vanish. On the one hand, using Lurie's result~\cite[Not.~5.2.18]{dag12}, it follows that the left derived functors $\mathbf{L}_tC$ of the right exact functor
\[
\xymatrix{C\colon \Comod_{\Psi} \ar[r] & \Comod_{\Psi}^{\N},\ M \mapsto (M \boxtimes_{A} A/I^k)_k}
\] 
are pro-trivial on compact $\Psi$-modules $M$ whenever $t \ge 1$. On the other hand, it is easy to see that there is an isomorphism of towers
\[
\mathbf{L}_tC(M) \cong (\Tor_t^{\Psi}(M,A/I^k))_k
\]
for all $M \in \Comod_{\Psi}$, giving the claim.
\end{proof}
\begin{exmp}\label{ex:klim}
	For $p>2$, let $K$ be $p$-complete $K$-theory and consider local homology $\Lambda^{(p)}$ with respect to the ideal $(p) \subset K_0 = \Z_p$. Let $g$ be a topological generator of $\Z_p^{\times}$; using the fiber sequence
	\[
	\xymatrix{L_{K(1)}S^0 \ar[r] & K \ar[r]^{\psi^g-1} & K}
	\]
	one can then calculate the local homology groups of $K_*$:
	\[
	\Lambda_{-s}^{(p)}(K_*) \cong \lim^s_{K_*K,i} K_*/{p^i} \cong 
	\begin{cases}
	 K_* & \text{if } s=0 \\
	V_{\Q} & \text{if } s=1 \\
	 0 & \text{otherwise,}
	\end{cases}
	\]
	where $V_{\Q}$ is an uncountable rational vector space. For the details of this computation, see~\cite{ctc}. From this, one can formally deduce that 
	\[
	\lim_{K_*K}^s(\ldots \xrightarrow{p} K_* \xrightarrow{p} K_* \xrightarrow{p} K_*) \cong 
	\begin{cases}
	V_{\Q} & \text{if } s=2 \\
	 0 & \text{otherwise.}
	\end{cases}
	\] 
	This example implies that even if the inverse limit of comodules can be computed in terms of the inverse limit of the underlying modules and a functor $\iota$ as in \Cref{sec:completion}, in general there cannot exist a convergent Grothendieck spectral sequence associated to the composite $\lim_{\Psi} \cong \iota\circ \lim_{A}$. Indeed, using a graded version of the $\Ext$-$p$-completeness criterion of \cite{bousfield_homotopy_1972} or \Cref{thm:Lcompletecriterion}, we see that $\lim_{K_*}^s(\ldots \xrightarrow{p} K_* \xrightarrow{p} K_* \xrightarrow{p} K_*) = 0$ for all $s\ge 0$, because $K_*$ is $p$-complete. Therefore, the $E_2$-page of the composite functor spectral sequence would have to be zero, while the abutment is nontrivial. 
	\end{exmp}

\section{Tilting and $t$-structures}\label{sec:tilting}

\subsection{Derived categories and tilting}

Throughout this section we let $(A,\Psi)$ be an Adams Hopf algebroid and $\D(\Psi)=\D(\Comod_{\Psi})$ its derived $\infty$-category. We recall from \Cref{prop:stableprops} that the $\infty$-category $\Stable_{\Psi}$ is  related to $\D(\Psi)$ via an adjunction
\[
\xymatrix{\omega \colon \Stable_{\Psi} \ar@<0.5ex>[r] & \ar@<0.5ex>[l] \D(\Psi) \colon \iota_*.}
\]
Here the symmetric monoidal, continuous functor $\omega$ is given by inverting the homology isomorphisms, and the right adjoint $\iota_*$ is a fully faithful embedding. Given a comodule $M \in \Comod_{\Psi}$ we can think of it as a complex in $\D(\Psi)$ concentrated in degree 0. Via $\iota_*$ we can also consider $M$ as an object of $\Stable_{\Psi}$.

In contrast to $\Stable_{\Psi}$, the dualizable comodules are not necessarily compact in $\D(\Psi)$. Nonetheless, we claim that the smallest localizing subcategory containing them is all of $\D(\Psi)$, so that they still form a suitable collection of generators. 
\begin{lem}\label{prop:dpsigenerators}
 	The smallest localizing subcategory of $\D(\Psi)$ containing the set $\cG_{\Psi}$ of dualizable comodules is all of $\D(\Psi)$, i.e., $\cG_{\Psi}$ generates $\D(\Psi)$. 
 \end{lem}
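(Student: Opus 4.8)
The plan is to avoid computing $\mathrm{RHom}_{\D(\Psi)}(G,X)$ directly and instead transport the generation statement along the adjunction $\omega \dashv \iota_*$ of \Cref{prop:stableprops}. The key observation is that $\cG_\Psi$ visibly generates $\Stable_\Psi$ as a localizing subcategory, and that $\omega$ is a colimit-preserving, essentially surjective functor carrying $\cG_\Psi$ to $\cG_\Psi$; generation then descends to $\D(\Psi)$ by a purely formal argument.

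First I would record that $\Stable_\Psi = \Loc(\cG_\Psi)$ inside $\Stable_\Psi$. Indeed, by definition $\Stable_\Psi = \Ind(\Thick(\cG_\Psi))$, so the smallest localizing subcategory containing $\cG_\Psi$ already contains $\Thick(\cG_\Psi)$ and, being closed under filtered colimits, contains all of $\Ind(\Thick(\cG_\Psi))$. Next I would extract two formal properties of $\omega$ from \Cref{prop:stableprops}: since $\omega$ is a left adjoint it preserves all colimits (in particular cofibre sequences and retracts), and since its right adjoint $\iota_*$ is fully faithful the counit $\omega\iota_* \to \Id_{\D(\Psi)}$ is an equivalence, so $\omega$ is essentially surjective. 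Finally I would note that $\omega$ is by construction the extension of the inclusion $\Thick(\cG_\Psi)\hookrightarrow \D(\Psi)$, so $\omega(G) \simeq G \in \cG_\Psi$ for every $G \in \cG_\Psi$.

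With these in hand the argument is a short diagram chase with no remaining content. Set $\cC = \{\, Z \in \Stable_\Psi : \omega(Z) \in \Loc_{\D(\Psi)}(\cG_\Psi)\,\}$. Because $\omega$ preserves colimits, cofibres and retracts and $\Loc_{\D(\Psi)}(\cG_\Psi)$ is localizing, $\cC$ is a localizing subcategory of $\Stable_\Psi$; it contains $\cG_\Psi$ by the previous observation, whence $\cC \supseteq \Loc(\cG_\Psi) = \Stable_\Psi$, i.e.\ $\cC = \Stable_\Psi$. Given any $Y \in \D(\Psi)$, essential surjectivity gives $Y \simeq \omega(\iota_* Y)$ with $\iota_* Y \in \Stable_\Psi = \cC$, so $Y \in \Loc_{\D(\Psi)}(\cG_\Psi)$. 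Therefore $\Loc_{\D(\Psi)}(\cG_\Psi) = \D(\Psi)$, as claimed.

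The only genuinely delicate point — and the reason I would route the proof through $\omega$ rather than work directly inside $\D(\Psi)$ — is that a hands-on argument would run into the failure of $\D(\Psi)$ to be left-complete: one cannot in general reconstruct an unbounded-above complex from the comodules appearing in its homology via a naive Postnikov colimit, so showing $\Comod_\Psi \subseteq \Loc(\cG_\Psi)$ does not immediately yield all of $\D(\Psi)$. Passing through $\Stable_\Psi$, where localizing generation by $\cG_\Psi$ is built into the definition, circumvents this issue entirely; the one thing worth verifying carefully is that $\cG_\Psi$ really denotes the same set on both sides of $\omega$, which is exactly what the identification $\omega(G)\simeq G$ provides.
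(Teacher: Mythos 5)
Your proof is correct, and it reaches the conclusion by a genuinely different formal route than the paper. The paper first uses presentability of $\D(\Psi)$ to translate the statement ``$\Loc(\cG_\Psi)=\D(\Psi)$'' into a detection criterion (a map $\phi$ is an equivalence iff $\Hom_{\D(\Psi)}(G,\phi)$ is an equivalence for all $G\in\cG_\Psi$), and then applies \Cref{lem:pushgen} with $f_!=\omega$, $f^*=\iota_*$, the key input being that $\iota_*$ is \emph{conservative}. You instead prove the localizing-subcategory statement directly: you observe $\Stable_\Psi=\Loc_{\Stable_\Psi}(\cG_\Psi)$ from the definition $\Stable_\Psi=\Ind(\Thick(\cG_\Psi))$, note that the preimage $\omega^{-1}\bigl(\Loc_{\D(\Psi)}(\cG_\Psi)\bigr)$ is localizing because $\omega$ is exact and colimit-preserving, and conclude via \emph{essential surjectivity} of $\omega$ (the counit equivalence $\omega\iota_*\simeq\Id$). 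Both arguments ultimately rest on the same two inputs from \Cref{prop:stableprops} --- full faithfulness of $\iota_*$ and generation of $\Stable_\Psi$ by $\cG_\Psi$ --- but your version has the merit of avoiding the (standard but nontrivial) presentability fact that localizing generation by a set is equivalent to joint detection of equivalences, which the paper's reduction silently invokes; the paper's route, in exchange, produces the detection-style statement and reuses \Cref{lem:pushgen}, which is also needed elsewhere (e.g.\ it only requires conservativity of the right adjoint, a weaker hypothesis than full faithfulness). One small precision worth keeping in your write-up: what you use is not abstract essential surjectivity but the specific equivalence $Y\simeq\omega(\iota_*Y)$, which exhibits a preimage of $Y$ lying in $\Stable_\Psi$; as you have it stated, this is exactly right.
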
 
 \begin{proof}
 	 Since $\D(\Psi)$  is presentable, the statement of the lemma is equivalent to the claim that $Z \in \D(\Psi) \simeq 0$ if and only if $\Hom_{\D(\Psi)}(G,Z)$ is contractible for each each $G \in \cG_{\Psi}$, or equivalently that a morphism $ \phi \colon X \to Y$ is an equivalence if and only if $\Hom_{\D(\Psi)}(G,\phi)$ is an equivalence of mapping spectra for all $G \in \cal{G}_{\Psi}$. To see this, apply \Cref{lem:pushgen} with $f_! = \omega$ and $f^* = \iota_*$ --- note that $\iota_*$ is fully faithful, and so in particular conservative, so that the conditions of the lemma are satisfied. 
 \end{proof}
 
Recall that given $I \subseteq A$ an invariant regular ideal, we defined the $I$-torsion category $\Stable^{I-\mathrm{tors}}_{\Psi}$ as $\Loc^{\otimes}_{\Stable_{\Psi}}(A/I)$. We can make the same definition in $\D(\Psi)$. 
\begin{defn}\label{defn:Dtors}
	The category $\Dtors(\Psi)$ is defined as the localizing tensor ideal in $\D(\Psi)$ generated by $A/I$, i.e., 
	\[
	\Dtors(\Psi) = \Loc^{\otimes}_{\D(\Psi)}(A/I). 
	\]
\end{defn}
 
\begin{lem}\label{lem:localomega}
	If $M \in \Stable_{\Psi}^{I-\mathrm{tors}}$, then $\omega M \in \Dtors(\Psi)$. 
\end{lem}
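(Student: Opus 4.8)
The plan is to run the standard "preimage of a localizing tensor ideal" argument, using the minimality built into the definition of $\Stable_{\Psi}^{I-\mathrm{tors}} = \Loc^{\otimes}_{\Stable_{\Psi}}(A/I)$. Concretely, I would consider the full subcategory
\[
\cC := \{M \in \Stable_{\Psi} : \omega M \in \Dtors(\Psi)\} = \omega^{-1}(\Dtors(\Psi)) \subseteq \Stable_{\Psi}.
\]
Since $\Stable_{\Psi}^{I-\mathrm{tors}}$ is by definition the smallest localizing tensor ideal of $\Stable_{\Psi}$ containing $A/I$, it suffices to establish two things: (i) $A/I \in \cC$, and (ii) $\cC$ is a localizing tensor ideal of $\Stable_{\Psi}$. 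Granting these, the inclusion $\Stable_{\Psi}^{I-\mathrm{tors}} \subseteq \cC$ is forced, and unwinding the definition of $\cC$ gives exactly the statement $\omega M \in \Dtors(\Psi)$ for all $M \in \Stable_{\Psi}^{I-\mathrm{tors}}$.

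For (i), I would use that the object $A/I$ of $\Stable_{\Psi}$ is $\iota_*(A/I)$, the image under the fully faithful right adjoint of the comodule $A/I$ regarded as a complex in degree $0$ in $\D(\Psi)$. Because $\iota_*$ is fully faithful (\Cref{prop:stableprops}), the counit is an equivalence $\omega \iota_* \simeq \mathrm{id}_{\D(\Psi)}$, so $\omega(A/I) \simeq A/I$, which is the generator of $\Dtors(\Psi)$ and hence lies in it. For (ii), I would check the closure properties directly from the properties of $\omega$ recorded in \Cref{prop:stableprops} and its construction: $\omega$ is continuous and exact, so it preserves filtered colimits and finite colimits and therefore all small colimits, commutes with suspension, and preserves cofiber sequences; as a functor it preserves retracts. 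Since $\Dtors(\Psi)$ is localizing (closed under small colimits, shifts, and retracts), each such colimit/shift/retract of objects of $\cC$ is again sent by $\omega$ into $\Dtors(\Psi)$, so $\cC$ is localizing. Finally, $\omega$ is symmetric monoidal, so for $M \in \cC$ and arbitrary $N \in \Stable_{\Psi}$ one has $\omega(M \otimes N) \simeq \omega M \otimes \omega N$; as $\omega M \in \Dtors(\Psi)$ and $\Dtors(\Psi)$ is a tensor ideal in $\D(\Psi)$, this product lies in $\Dtors(\Psi)$, whence $M \otimes N \in \cC$. Thus $\cC$ is a localizing tensor ideal.

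I do not expect a genuine obstacle here; the lemma is formal. The only point needing a little care is (i)—identifying $\omega$ of the generator with the generator—where fully faithfulness of $\iota_*$ (equivalently, the invertibility of the counit $\omega \iota_* \simeq \mathrm{id}$) is the essential input. Everything else is the routine observation that preimages of localizing tensor ideals under exact, continuous, symmetric monoidal functors are again localizing tensor ideals, so no explicit computation is required.
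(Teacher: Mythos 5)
Your proof is correct, and it takes a genuinely more direct route than the paper's. Both arguments rest on the same formal mechanism---minimality of the generated ideal plus the fact that $\omega$ is exact, colimit-preserving, and symmetric monoidal, with $\omega(A/I) \simeq A/I$ as the one essential identification---but they handle the tensor-ideal condition differently. The paper proves the more general containment $\omega \Loc^{\otimes}_{\Stable_{\Psi}}(\mathrm{C}) \subseteq \Loc^{\otimes}_{\cD(\Psi)}(\omega \mathrm{C})$ for an arbitrary collection $\mathrm{C}$, and does so by trading tensor ideals for plain localizing subcategories: it uses the identity $\Loc^{\otimes}_{\Stable_{\Psi}}(\mathrm{C}) = \Loc_{\Stable_{\Psi}}(\mathrm{C} \otimes \iota_*\cG_{\Psi})$, applies the standard image argument for localizing subcategories, and then invokes \Cref{prop:dpsigenerators} (that the dualizable comodules generate $\D(\Psi)$) to convert $\Loc_{\D(\Psi)}(\omega \mathrm{C} \otimes \cG_{\Psi})$ back into the tensor ideal $\Loc^{\otimes}_{\D(\Psi)}(\omega \mathrm{C})$. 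You instead absorb the ideal condition into the preimage argument itself: since $\omega$ is symmetric monoidal and $\Dtors(\Psi)$ is already an ideal in $\D(\Psi)$, the preimage $\omega^{-1}(\Dtors(\Psi))$ is a localizing tensor ideal containing $A/I$, and no generation statement for $\D(\Psi)$ is needed. Your route is shorter and bypasses \Cref{prop:dpsigenerators} entirely; it even recovers the paper's general statement by taking the preimage of $\Loc^{\otimes}_{\cD(\Psi)}(\omega \mathrm{C})$ instead. What the paper's bookkeeping buys is that all closure checks happen at the level of plain localizing subcategories, where only colimit-preservation of $\omega$ is used, and it makes explicit the parallel role of $\cG_{\Psi}$ as generators on both sides of the adjunction---a fact the paper wants on record anyway for the later arguments in \Cref{sec:tilting}.
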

\begin{proof}
We will prove the more general statement that for any collection $\mathrm{C}$ of objects in $\Stable_{\Psi}$ we have
	\[
\omega \Loc_{\Stable_{\Psi}}^{\otimes}(\mathrm{C}) \subseteq \Loc^{\otimes}_{\cD(\Psi)}(\omega \mathrm{C}).
	\]
To this end, first observe that $\Loc^{\otimes}_{\Stable_{\Psi}}(\mathrm{C}) = \Loc_{\Stable_{\Psi}}(\mathrm{C} \otimes \iota_*\cG_{\Psi})$ where $\cG_{\Psi}$ denotes, as usual, the set of dualizable comodules. Since $\omega $ preserves colimits and is symmetric monoidal, a standard argument shows that 
\[
\omega \Loc_{\Stable_{\Psi}}(\mathrm{C} \otimes \iota_*\cG_{\Psi}) \subseteq \Loc_{D(\Psi)}(\omega (\mathrm{C} \otimes \iota_*\cG_{\Psi})) = \Loc_{\D(\Psi)}(\omega \mathrm{C} \otimes \cG_{\Psi}).
\]
By \Cref{prop:dpsigenerators} we see that $\Loc_{\D(\Psi)}(\omega \mathrm{C} \otimes \cG_{\Psi})  = \Loc^{\otimes}_{\D(\Psi)}(\omega \mathrm{C})$.
\end{proof}

In \cite[Sec.~5]{bhv} we studied the abelian category $\Comod_{\Psi}^{I-\mathrm{tors}}$ of $I$-torsion comodules. These are, by definition, those comodules whose underlying $A$-module is $I$-torsion. Recall from \cite[Prop.~5.10]{bhv} that  $\Comod_{\Psi}^{I-\mathrm{tors}}$ is a Grothendieck abelian category, so we can define its derived $\infty$-category $\cD(\Comod_{\Psi}^{I-\mathrm{tors}})$. We can also describe the generators in $\Comod_{\Psi}^{I-\mathrm{tors}}$. In what follows, we let $T_I^{\Psi}$ denote the $I$-torsion functor on $\Comod_{\Psi}$ and write $T_I^A$ for the corresponding functor on $\Mod_A$.
\begin{lem}
	The collection $\{ G \otimes A/I^k \mid G \in \cal{G}_{\Psi}, k \ge 1 \}$ is a set of generators for $\Comod_{\Psi}^{I-\mathrm{tors}}$. 
\end{lem}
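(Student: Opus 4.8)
The plan is to verify the standard criterion for a generating set in a Grothendieck abelian category: it suffices to produce, for every $M \in \Comod_{\Psi}^{I-\mathrm{tors}}$, an epimorphism onto $M$ from a direct sum of objects drawn from $\{G \otimes A/I^k\}$. I would begin from the fact recorded in the setup that the dualizable comodules $\cG_{\Psi}$ generate the ambient category $\Comod_{\Psi}$; hence there is an epimorphism $q \colon \bigoplus_{\alpha} G_{\alpha} \twoheadrightarrow M$ with each $G_{\alpha} \in \cG_{\Psi}$. The goal is then to refactor $q$ through the proposed torsion generators.

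The heart of the argument is this factorization, and the key inputs are that each dualizable comodule $G_{\alpha}$ has finitely generated underlying $A$-module and that $M$ is $I$-torsion. Write $\phi_{\alpha} \colon G_{\alpha} \to M$ for the $\alpha$-th component of $q$. Its image is a finitely generated $A$-submodule of the $I$-torsion module $M$, hence is annihilated by a single power $I^{k_{\alpha}}$; consequently $\phi_{\alpha}(I^{k_{\alpha}} G_{\alpha}) = 0$. Since $I$ is invariant, $I^{k_{\alpha}} G_{\alpha}$ is a subcomodule, and $\phi_{\alpha}$ factors as a composite of comodule maps $G_{\alpha} \twoheadrightarrow G_{\alpha}/I^{k_{\alpha}}G_{\alpha} \to M$, the first being the quotient, where $G_{\alpha}/I^{k_{\alpha}}G_{\alpha} = G_{\alpha} \otimes A/I^{k_{\alpha}}$ (the derived and underived tensor coinciding because $G_{\alpha}$ is flat over $A$). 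Assembling these factorizations over $\alpha$ yields a map $\bigoplus_{\alpha}(G_{\alpha} \otimes A/I^{k_{\alpha}}) \to M$ through which $q$ factors, so this map is again an epimorphism.

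Finally I would check the bookkeeping: each summand $G_{\alpha} \otimes A/I^{k_{\alpha}}$ is annihilated by a power of $I$ and hence lies in $\Comod_{\Psi}^{I-\mathrm{tors}}$; the direct sum does too, as torsion comodules are closed under coproducts (each element lives in a finite subsum); and an epimorphism in $\Comod_{\Psi}$ between torsion objects remains an epimorphism in the full abelian subcategory $\Comod_{\Psi}^{I-\mathrm{tors}}$, since cokernels there agree with those computed in $\Comod_{\Psi}$. This establishes the generating criterion. The main obstacle — really the only nontrivial point — is the factorization step: it hinges on combining the finite generation of dualizable comodules with the torsion hypothesis to bound the annihilator of each image by a single power $I^{k_{\alpha}}$, and on the invariance of $I$ to ensure that every map in sight, including the quotient $G_{\alpha} \twoheadrightarrow G_{\alpha} \otimes A/I^{k_{\alpha}}$, is a morphism of comodules rather than merely of $A$-modules.
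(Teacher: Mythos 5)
Your proof is correct, but it takes a genuinely different route from the paper's. You verify the epimorphism form of the generating criterion: starting from an epimorphism $\bigoplus_{\alpha} G_{\alpha} \twoheadrightarrow M$ supplied by the fact that $\cG_{\Psi}$ generates $\Comod_{\Psi}$ (the Adams hypothesis), you use that dualizable comodules are finitely generated projective over $A$ to bound the annihilator of each image by a single power $I^{k_{\alpha}}$, and invariance of $I$ to make the factorization $G_{\alpha} \twoheadrightarrow G_{\alpha}\otimes A/I^{k_{\alpha}} \to M$ a factorization of comodule maps. The paper instead verifies the Hom-faithfulness form of the criterion: given a morphism $f$ in $\Comod_{\Psi}^{I-\mathrm{tors}}$ killed by all $\Hom(G \otimes A/I^k,-)$, it applies the adjunction $\Hom_{\Comod_{\Psi}}(G \otimes A/I^k,f) \cong \Hom_{\Comod_{\Psi}}(G,\iHom_{\Psi}(A/I^k,f))$, uses generation of $\Comod_{\Psi}$ by $\cG_{\Psi}$ to conclude $\iHom_{\Psi}(A/I^k,f)=0$ for all $k$, and then passes to the colimit and invokes the identification of the torsion functor $T_I^{\Psi}(-) \cong \colim_k \iHom_{\Psi}(A/I^k,-)$ together with $T_I^{\Psi}(f)=f$ for torsion comodules. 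Your argument is more elementary and explicitly constructive (it exhibits the covering epimorphism), at the price of invoking the structural fact that dualizable comodules have finitely generated underlying $A$-modules, which the paper only quotes in the discrete case; the paper's argument is slicker and stays at the level of functors, leveraging the internal hom and the torsion-functor formula already established in its companion work, and so avoids any element-level reasoning. Both proofs rest on the same essential input, namely that $\cG_{\Psi}$ generates $\Comod_{\Psi}$.
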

\begin{proof}
	Let $f \colon M \to N$ be a morphism in $\Comod_{\Psi}^{I-\mathrm{tors}}$. We must show that if $\Hom_{\Comod_{\Psi}^{I-\mathrm{tors}}}(G \otimes A/I^k,f) = 0$ for all $G \in \cG_{\Psi}$ and $k \ge 1$, then $f = 0$. Since the inclusion is fully faithful, we have $\Hom_{\Comod_{\Psi}}(G \otimes A/I^k,f) = 0$, or by adjunction that $\Hom_{\Comod_{\Psi}}(G,\iHom_{\Psi}(A/I^k,f)) = 0$. Since the collection of dualizable comodules generates $\Comod_{\Psi}$ we deduce that $\iHom_{\Psi}(A/I^k,f) =0$ for all $k \ge 1$. Taking colimits, we see that $\colim_k \iHom_{\Psi}(A/I^k,f) = 0$. But by \cite[Lem.~5.5]{bhv} the $I$-torsion functor $T_I^{\Psi}(-)\cong \colim_k \iHom_{\Psi}(A/I^k,-)$. Since $f$ is a morphism between $I$-torsion comodules, we deduce that $T_I^{\Psi}(f) = f = 0$, as required.  
\end{proof}

Consider then the thick subcategory $\Thick_{\cD(\Comod_{\Psi}^{I-\mathrm{tors}})}(\{ G_{\Psi} \otimes A/I^k | k \ge 1 \})$. An argument similar to \cite[Lem.~5.13]{bhv} shows that if $I$ is generated by a finite invariant regular sequence, then $\Thick_{\cD(\Comod_{\Psi}^{I-\mathrm{tors}})}(\{ G_{\Psi} \otimes A/I^k | k \ge 1 \}) = \Thick_{\cD(\Comod_{\Psi}^{I-\mathrm{tors}})}(G_{\Psi} \otimes A/I)$. 
This justifies the following definition of the $I$-torsion analogue of $\Stable_{\Psi}$.
\begin{defn}
	We define the stable category of $I$-torsion comodules by
	\[
	\begin{split}
	\Stable(\Comod_{\Psi}^{I-\mathrm{tors}}) &= \Ind\Thick_{\cD(\Comod_{\Psi}^{I-\mathrm{tors}})}(\cG_{\Psi} \otimes A/I)\\
	&=\Ind\Thick_{\cD^+(\Comod_{\Psi}^{I-\mathrm{tors}})}(\cG_{\Psi} \otimes A/I),
	\end{split}
	\]
	where $\cD^+(\Comod_{\Psi}^{I-\mathrm{tors}})$ denotes the left bounded derived category of $\Comod_{\Psi}^{I-\mathrm{tors}}$ \cite[Var.~1.3.2.8]{ha}.
\end{defn}

There is a version of the local cohomology spectral sequence for $\Gamma_I^{\Psi}$. Recall that since the forgetful functor $\epsilon_*:\Comod_{\Psi}\to\Mod_A$ is exact, we obtain a functor $\epsilon_*:\Stable_\Psi\to\D(A)$ which we also denote by $\epsilon_*$. We stress that the following construction only gives a spectral sequence of $A$-modules --- we do not know if it can be given the structure of a spectral sequence of comodules.

\begin{lem}\label{lem:sscomodules}
	For any $X \in \Stable_{\Psi}$ there is a strongly convergent spectral sequence of $A$-modules
	\[
E_2^{s,t} \cong \mathbf{R}^sT_I^{\Psi}(H^t(X)) \implies H^{s+t}(\Gamma_I^{\Psi}X). 
	\]
\end{lem}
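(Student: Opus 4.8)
The plan is to deduce the statement from its module-theoretic counterpart, \Cref{prop:homology_ss}(1), by transporting that spectral sequence along the exact, symmetric monoidal functor $\epsilon_*\colon\Stable_{\Psi}\to\D(A)$. Concretely, I would apply \Cref{prop:homology_ss}(1) to the object $\epsilon_*X\in\D(A)$, obtaining a strongly convergent spectral sequence of $A$-modules
\[
E_2^{s,t}=H^s_I\!\big(H^t(\epsilon_*X)\big)\implies H^{s+t}\big(\Gamma_I^A\,\epsilon_*X\big),
\]
where $\Gamma_I^A$ denotes the module-theoretic torsion functor. Since every term is produced by forgetting to $A$-modules, the output is only a spectral sequence of $A$-modules, which is the source of the caveat in the statement. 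It then remains to reidentify the $E_2$-page and the abutment in terms of the comodule functors $\mathbf{R}^sT_I^{\Psi}$ and $\Gamma_I^{\Psi}$.

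The abutment is the easier of the two. Because $\Gamma_I$ is smashing, $\Gamma_I^{\Psi}X\simeq\Gamma_I^{\Psi}(A)\otimes X$, and since $\epsilon_*$ is symmetric monoidal (as $\Psi$ is flat) we have $\epsilon_*\Gamma_I^{\Psi}X\simeq\epsilon_*\Gamma_I^{\Psi}(A)\otimes_A\epsilon_*X$. The torsion unit $\Gamma_I^{\Psi}(A)$ is computed by the stable Koszul complex on the regular sequence $x_1,\dots,x_n$, which $\epsilon_*$ carries to the corresponding module-level Koszul complex computing $\Gamma_I^A(A)$; hence $\epsilon_*\Gamma_I^{\Psi}X\simeq\Gamma_I^A\epsilon_*X$, matching the abutment above.

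For the $E_2$-page I would first use exactness of $\epsilon_*$ to write $H^t(\epsilon_*X)\cong\epsilon_*H^t(X)$, and then establish the compatibility $H^s_I(\epsilon_*N)\cong\epsilon_*\mathbf{R}^sT_I^{\Psi}(N)$ for every comodule $N$. At the underived level this is clear, since the $I$-torsion subcomodule of $N$ has underlying module the $I$-torsion submodule of $\epsilon_*N$. To derive it, resolve $N$ by injective comodules $J^\bullet$; by \cite[Lem.~2.1(c)]{hs_localcohom} each $J^i$ is a retract of an extended comodule $\Psi\boxtimes J'$ with $J'$ an injective $A$-module, so $\epsilon_*J^i$ is a retract of $\Psi\otimes_A J'$. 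As $I$ is generated by a regular sequence, local cohomology can be computed by the \v{C}ech complex, which commutes with the flat functor $\Psi\otimes_A-$; thus $H^s_I(\Psi\otimes_A J')\cong\Psi\otimes_A H^s_I(J')=0$ for $s>0$, so the $\epsilon_*J^i$ are $T_I^A$-acyclic and compute $H^s_I(\epsilon_*N)$. Applying $\epsilon_*$ to $H^s(T_I^{\Psi}J^\bullet)$ then yields the desired identification. Combined with the previous paragraph, this rewrites the spectral sequence as $E_2^{s,t}\cong\mathbf{R}^sT_I^{\Psi}(H^t(X))\implies H^{s+t}(\Gamma_I^{\Psi}X)$ of $A$-modules, and strong convergence is inherited from \Cref{prop:homology_ss}; alternatively, since $\mathbf{R}^sT_I^{\Psi}=0$ for $s>n$, there is a horizontal vanishing line at $E_2$.

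The main obstacle is exactly the comparison underlying the last paragraph, namely showing that the $\Stable_{\Psi}$-torsion functor $\Gamma_I^{\Psi}$ agrees with the total right derived functor of the abelian torsion functor $T_I^{\Psi}$. I only expect to be able to verify this after applying the forgetful functor, where local cohomology of modules is under firm control; consequently the differentials are controlled only as maps of $A$-modules, and I would not claim a comodule structure on the spectral sequence — precisely the limitation flagged before the statement.
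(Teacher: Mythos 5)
Your overall strategy coincides with the paper's: both proofs apply \Cref{prop:homology_ss}(1) to $\epsilon_*X$ and then re-identify the $E_2$-page and the abutment in comodule terms. The paper does the two identifications by citation --- $\epsilon_*\mathbf{R}^sT_I^{\Psi}\cong\mathbf{R}^sT_I^A\epsilon_*$ is \cite[Lem.~5.12]{bhv} and $\epsilon_*\Gamma_I^{\Psi}\simeq\Gamma_I^A\epsilon_*$ is \cite[Lem.~5.20]{bhv} --- whereas you reprove them. Your $E_2$-page argument (injective comodules are retracts of extended comodules $\Psi\boxtimes J'$ with $J'$ injective over $A$, and such objects are $T_I^A$-acyclic) is essentially the standard proof of the cited lemma, going back to Hovey--Strickland \cite{hs_localcohom}; the one gloss is the phrase ``the \v{C}ech complex commutes with the flat functor $\Psi\otimes_A-$''. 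The underlying $A$-module of $\Psi\boxtimes J'$ is formed along the \emph{right} unit $\eta_R$, while local cohomology of that module is computed by the \v{C}ech complex tensored along the \emph{left} unit $\eta_L$; to slide the \v{C}ech complex past $\Psi$ onto $J'$ one must pass from the \v{C}ech complex on $\eta_L(x_1),\dots,\eta_L(x_n)$ to the one on $\eta_R(x_1),\dots,\eta_R(x_n)$, which is exactly where invariance of $I$ (plus flatness of $\Psi$ on both sides) is used. This is repairable but should be said.

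The genuine gap is in your abutment step: the ``stable Koszul complex on the regular sequence $x_1,\dots,x_n$'' is not an object of $\Stable_{\Psi}$ in general, so the claim that it computes $\Gamma_I^{\Psi}(A)$ does not make sense as stated. Multiplication by $x_i$ is a comodule self-map of $A$ only when $x_i$ is strictly invariant, i.e.\ $\eta_L(x_i)=\eta_R(x_i)$; invariance of the ideal (or of the sequence in the usual sense) only gives invariance of $x_i$ modulo $(x_1,\dots,x_{i-1})$ --- for instance $v_1\in BP_*$ has $\eta_R(v_1)=v_1+pt_1\neq v_1$ --- so the telescopes $x_i^{-1}A$, and hence the Koszul complex, cannot be formed in comodules. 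The correct replacement, which the paper itself uses elsewhere (proof of \Cref{thm:derder}(2)), is the formula $\Gamma_I^{\Psi}(A)\simeq\colim_k \iHom_{\Psi}(A/I^k,A)$ of \cite[Prop.~5.24]{bhv}: here $A/I^k$ is an honest comodule because $I^k$ is invariant, it is compact and dualizable in $\Stable_{\Psi}$, and since $\epsilon_*$ is symmetric monoidal and preserves colimits it carries this to $\colim_k \iHom_{\cD(A)}(A/I^k,A)\simeq\Gamma_I^A(A)$ in $\cD(A)$, which is the standard module-level identification for a (weakly pro)regular sequence. With that substitution your abutment comparison becomes a correct proof of \cite[Lem.~5.20]{bhv}, and the rest of your argument goes through.
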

\begin{proof}
	For $\epsilon_*X$ there is a strongly convergent spectral sequence of $A$-modules
\[
E_2^{s,t} \cong H_I^s(H^t(\epsilon_*X)) \implies H^{s+t}(\Gamma_I^A(\epsilon_*X)),
\]
see \Cref{prop:homology_ss}. We will identify this with the claimed spectral sequence. 

The local cohomology groups $H_I^s$ are equivalent to $\mathbf{R}^sT_I^A$, the derived functor of torsion in $A$-modules. We then have isomorphisms
\[
\mathbf{R}^sT_{I}^A(H^t(\epsilon_*X)) \cong \mathbf{R}^sT_{I}^A(\epsilon_* H^t(X)) \cong \epsilon_*\mathbf{R}^sT_I^{\Psi}(H^t(X)),
\]
where the last equivalence is \cite[Lem.~5.12]{bhv}. This identifies the $E_2$-page of the spectral sequence of the lemma. 

For the abutment, we have $H^{s+t}(\Gamma_I^{A}(\epsilon_*X)) \cong H^{s+t}(\epsilon_*\Gamma_I^{\Psi}(X)) \cong \epsilon_*H^{s+t}(\Gamma_I^{\Psi}(X))$, where the first isomorphism follows from \cite[Lem.~5.20]{bhv}. 
\end{proof}

After collecting this preliminary material, we can now state the main theorem of this section.

\begin{thm}\label{thm:derder}
Let $(A,\Psi)$ be an Adams Hopf algebroid and $I \subseteq A$ a finitely generated invariant ideal. 
	\begin{enumerate}
			\item Suppose that $I$ is generated by a weakly proregular sequence. If $(A,\Psi)=(A,A)$ is discrete, then there is a canonical equivalence 
		between the right completion of $\cD^{-}(\LM)$ and $\Dcmpl(A)$. Moreover, an object $M \in \cD(A)$ is $I$-complete if and only if the homology groups $H_*M$ are $L$-complete.
		\item If $I$ is generated by a regular sequence, then there is a canonical equivalence $\cD(\Comod_{\Psi}^{I-\mathrm{tors}}) \simeq \Dtors(\Psi)$. Moreover, an object $M \in \D(\Psi)$ is $I$-torsion if and only if the homology groups $H_*M$ are $I$-torsion. 

	\end{enumerate}
\end{thm}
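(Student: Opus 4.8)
Both parts follow a common template: an exact functor from a derived category of ``local'' abelian objects into $\D(\Psi)$ (resp. $\cD(A)$) is shown to be fully faithful with essential image the relevant (co)localizing subcategory, and the accompanying homology criterion identifies the heart of the induced $t$-structure. I would prove the homology criterion first, since it feeds into both the essential-image identification and the heart comparison.

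For the torsion statement~(2), the homology criterion splits into two inclusions. The easy direction is that $\Dtors(\Psi)=\Loc^{\otimes}_{\D(\Psi)}(A/I)$ is contained in the class $\cal{T}$ of objects with $I$-torsion homology: since $I$ is regular, $A/I^{k}\in\Thick(A/I)$, so each $G\otimes A/I$ (with $G$ dualizable, hence flat) has $I$-torsion homology, and $\cal{T}$ is visibly a localizing tensor ideal because the $I$-torsion comodules form a Serre tensor ideal in $\Comod_{\Psi}$. The reverse inclusion $\cal{T}\subseteq\Dtors(\Psi)$ is the crux: I would first show that a single $I$-torsion comodule, placed in degree $0$, lies in $\Dtors(\Psi)$ using that $\Comod_{\Psi}^{I-\mathrm{tors}}$ is generated by the $G\otimes A/I^{k}$, and then build a general $M\in\cal{T}$ from its Postnikov tower, whose graded pieces are such torsion comodules. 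Granting this, the continuous exact functor $\Theta\colon\cD(\Comod_{\Psi}^{I-\mathrm{tors}})\to\D(\Psi)$ induced by the colimit-preserving inclusion lands in $\Dtors(\Psi)$, since it carries the generators $G\otimes A/I^{k}$ there. Full faithfulness I would verify on these generators by comparing $\mathbf{R}\Hom$ in the two derived categories; the local cohomology spectral sequence of \Cref{lem:sscomodules} together with dualizability of $G\otimes A/I$ should reduce this to the statement that the ambient $\Ext$-groups between torsion generators are already computed inside $\Comod_{\Psi}^{I-\mathrm{tors}}$. Essential surjectivity then follows from the criterion, as the fully faithful image is localizing and contains a generating set of $\Dtors(\Psi)$.

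For the complete statement~(1) the obstacle is that $\LM$ is not Grothendieck, so $\cD(\LM)$ must be constructed as the right completion of $\cD^{-}(\LM)$. Since $\LM$ has enough projectives (retracts of pro-free modules, \Cref{prop:lcomplete}), I would realize $\cD^{-}(\LM)\simeq K^{-}(\mathrm{Proj}\,\LM)$ and define $\Phi\colon\cD^{-}(\LM)\to\cD(A)$ by viewing a bounded-above complex of pro-free modules inside $\cD(A)$. Each pro-free $\widehat{F}$ is $I$-complete, as $\Lambda^{I}F\simeq\widehat{F}$ for flat $F$ by \Cref{cor:gmses}, so $\Phi$ lands in $\Dcmpl(A)$. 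The load-bearing computation is that for $F$ free and $Q\in\LM$ one has $\mathbf{R}\Hom_{A}(\widehat{F},Q)\simeq\Hom_{\LM}(\widehat{F},Q)$ concentrated in degree $0$: using the local duality adjunction $(\Gamma_{I},\Lambda^{I})$ of \eqref{eq:ldmod}, $\Gamma_{I}\widehat{F}\simeq\Gamma_{I}F$, and completeness of $Q$, one gets $\mathbf{R}\Hom_{A}(\widehat{F},Q)\simeq\mathbf{R}\Hom_{A}(\Gamma_{I}F,Q)\simeq\mathbf{R}\Hom_{A}(F,\Lambda^{I}Q)\simeq\Hom_{A}(F,Q)$. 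This degreewise comparison makes $\Phi$ fully faithful, and right-completing both sides extends it to an equivalence onto $\Dcmpl(A)$. The degree-zero instance of the homology criterion is exactly the identification of the heart of $\Dcmpl(A)$ with $\LM$: using the local homology spectral sequence $E^{2}_{s,t}=L_{s}(H_{t}M)\Rightarrow H_{s+t}(\Lambda^{I}M)$ from \Cref{prop:homology_ss} and \Cref{thm:gm_localhomology}, if each $H_{t}M$ is $L$-complete then $L_{s}(H_{t}M)=0$ for $s>0$, the spectral sequence collapses, and $\Lambda^{I}M\simeq M$; conversely, for complete $M$ the $\Ext$-$I$ completeness criterion (\Cref{thm:Lcompletecriterion}, via \Cref{cor:Lamdalocal}) forces each $H_{t}M$ to be $L$-complete. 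In particular $\Dcmpl(A)$ is closed under truncation, which is what lets the right-completion formalism run.

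I expect two genuine difficulties. In~(2), the reverse inclusion $\cal{T}\subseteq\Dtors(\Psi)$ for unbounded $M$ (controlling the Postnikov tower and placing a single torsion comodule in the localizing tensor ideal generated by $A/I$) together with the $\Ext$-comparison underlying full faithfulness is the technical heart; weak proregularity of the regular sequence is precisely what keeps the torsion functor well enough behaved for both. In~(1), the main subtlety is the right-completion bookkeeping: checking that $\Dcmpl(A)$ is right-complete, that $\Phi$ reaches every left-bounded complete object, and that passing to right completions preserves full faithfulness. The vanishing $L_{s}\widehat{F}=0$ for $s>0$ and the behaviour of $\Lambda^{I}$ on projectives are the inputs that keep these manipulations exact.
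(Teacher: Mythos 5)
Your part~(1) is, in substance, the paper's own argument (\Cref{prop:ffcomp}): the key $\Ext$-computation $\mathbf{R}\Hom_A(\widehat{F},Q)\simeq\Hom_A(F,Q)$ for pro-free $\widehat F$ and $L$-complete $Q$ is exactly how the paper proves full faithfulness, and the right-completion bookkeeping is handled there by showing $\Dcmpl(A)$ carries an induced left and right complete $t$-structure. The one step of yours I would not sign off on is deducing ``$M$ complete $\Rightarrow H_*M$ $L$-complete'' from \Cref{cor:Lamdalocal}/\Cref{thm:Lcompletecriterion}: the hyperext spectral sequence $\Ext^p_A(x_i^{-1}A/(x_1,\dots,x_{i-1}),H_qM)\Rightarrow\pi_{q-p}\iHom(x_i^{-1}A/(x_1,\dots,x_{i-1}),M)=0$ has up to $n+1$ nonzero columns, and a vanishing abutment does not force a vanishing $E_2$-page once differentials are possible. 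The paper argues instead that the abutment of the local homology spectral sequence of \Cref{prop:homology_ss} carries a finite filtration with subquotients on the $E_2$-page, which lies in $\LM$ by \Cref{prop:lcomplete}, and $\LM$ is abelian; no converse-type input is needed for that direction.

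The genuine gaps are both in part~(2), at precisely the two points you call the technical heart. First, the Postnikov-tower reduction of $\Dhtors(\Psi)\subseteq\Dtors(\Psi)$ cannot work as stated: localizing subcategories are closed under colimits, not limits. Right completeness of $\D(\Psi)$ gives $M\simeq\colim_n\tau_{\geq -n}M$, which reduces you to objects with homology bounded \emph{below}; but such an object is recovered from its homology groups only through the inverse limit $X\simeq\lim_m\tau_{\leq m}X$ of its Postnikov truncations, and membership in $\Loc^{\otimes}(A/I)$ does not pass through inverse limits. (Your argument is fine for homology bounded \emph{above}, where each $\tau_{\geq -n}X$ is a finite extension of shifted torsion comodules, but there is no colimit decomposition of an unbounded object into such pieces.) This is exactly why the paper does not argue inside $\D(\Psi)$ at all: it applies $\iota_*$ to land in $\Stable_\Psi$, where the local cohomology spectral sequence of \Cref{lem:sscomodules} exists (the paper explicitly says it does not know how to build this spectral sequence in $\D(\Psi)$), shows it collapses when the homology is torsion so that $\Gamma_I^{\Psi}\iota_*X\to\iota_*X$ is a homology isomorphism, and then applies $\omega$ and \Cref{lem:localomega}; this is \Cref{prop:cohomequivalence}, and it is where regularity enters via the vanishing line. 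Second, verifying full faithfulness of $\Theta\colon\cD(\Comod_{\Psi}^{I-\mathrm{tors}})\to\D(\Psi)$ ``on the generators'' is not sufficient, because $G\otimes A/I^k$ is \emph{not} compact in either derived category --- dualizable comodules fail to be compact in $\D(\Psi)$, which is the very reason $\Stable_\Psi$ was introduced. Concretely, for a fixed generator $X$ the class of $Y$ for which $\Hom(X,Y)\to\Hom(\Theta X,\Theta Y)$ is an equivalence is not closed under colimits in $Y$, since $\Hom(X,-)$ does not preserve them, so the comparison does not propagate from generators to the whole category. The paper circumvents this by proving the equivalence first at the stable level, $\Stable(\Comod_{\Psi}^{I-\mathrm{tors}})\simeq\Stable_{\Psi}^{I-\mathrm{tors}}$ (\Cref{prop:stabletor}), where the generators are compact by construction, with the bounded comparison \Cref{prop:ff} proved via injective torsion comodules rather than generators, and only then passes to $\D(\Psi)$ by inverting quasi-isomorphisms. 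Any repair of your outline will have to reintroduce one of these two mechanisms (a derived torsion functor with a convergent spectral sequence, or a compactly generated model); the purely formal Postnikov/generator argument does not close either gap.
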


Combining this result with \Cref{sec:derivedcompletion} yields the following tilting-theoretic interpretation of local duality.

\begin{cor}\label{cor:tilting}
For any commutative ring $A$ and $I \subseteq A$ a finitely generated ideal, the functors
\[
\xymatrix{L_0^I\colon \Mod_{A}^{I-\mathrm{tors}} \ar@<0.5ex>[r] & \LM \noloc T_I^A \ar@<0.5ex>[l]}
\]
induce mutual inverse symmetric monoidal equivalences
\[
\xymatrix{\Lambda^I\colon \cD(\Mod_{A}^{I-\mathrm{tors}}) \ar@<0.5ex>[r]^-{\sim} & \cD(\LM)\noloc \Gamma_I, \ar@<0.5ex>[l]^-{\sim}}
\]
where $ \cD(\LM)$ denotes the right completion of $\cD^{-}(\LM)$. 
\end{cor}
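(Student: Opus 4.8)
The strategy is to exhibit the asserted equivalence as a composite of three equivalences and then to identify the resulting functors with $\Lambda^I$ and $\Gamma_I$. I work throughout in the discrete case $(A,A)$, where $\Stable_A \simeq \cD(A)$ by \Cref{prop:stableprops} and where the functors $\Gamma_I$ and $\Lambda^I$ may be built from a Koszul complex rather than from $A/I$ (cf.\ the remark following \Cref{prop:homology_ss} and \cite[Rem.~3.10, Thm.~3.11]{bhv}); this is precisely what lets me handle an arbitrary finitely generated ideal and invoke both parts of \Cref{thm:derder}.

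First I would assemble the three equivalences. \Cref{thm:derder}(2), specialized to $(A,A)$, gives a canonical equivalence $\cD(\Mod_{A}^{I-\mathrm{tors}}) \simeq \Dtors(A)$, while \Cref{thm:derder}(1) gives a canonical equivalence $\cD(\LM) \simeq \Dcmpl(A)$, where $\cD(\LM)$ is by definition the right completion of $\cD^{-}(\LM)$. The third ingredient is local duality in its abstract form \cite[Thm.~3.11]{bhv}: the functors $\Gamma_I$ and $\Lambda^I$ restrict to mutually inverse symmetric monoidal equivalences
\[
\Gamma_I \colon \Dcmpl(A) \rightleftarrows \Dtors(A) \noloc \Lambda^I.
\]
Composing these three equivalences produces the desired equivalence $\cD(\Mod_{A}^{I-\mathrm{tors}}) \simeq \cD(\LM)$.

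Next I would check that the composite is realized by $\Lambda^I$ and $\Gamma_I$ themselves and that it is symmetric monoidal. Symmetric monoidality is inherited from \cite[Thm.~3.11]{bhv} once one verifies that the identification equivalences of \Cref{thm:derder} are compatible with the respective tensor structures --- on $\cD(\Mod_{A}^{I-\mathrm{tors}})$ the derived tensor product, and on $\cD(\LM)$ the product $L_0(- \otimes -)$ arising from the symmetric monoidal structure on $\LM$ of \cite[Cor.~A.7]{hovey_morava_1999}. For the functor identification, I would trace through the three equivalences and observe that, under $\Dtors(A) \simeq \cD(\Mod_{A}^{I-\mathrm{tors}})$ and $\Dcmpl(A) \simeq \cD(\LM)$, the local homology functor $\Lambda^I$ corresponds to the total left derived functor of $L_0^I$ and $\Gamma_I$ to the total right derived functor of $T_I^A$, using \Cref{thm:gm_localhomology}(1) (so that $\Lambda^I \simeq \mathbf{L}C^I$ with $H_0\Lambda^I \cong L_0^I$) together with the analogous identification of $\Gamma_I$ with derived torsion. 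Restricting to the hearts then recovers the functors $L_0^I$ and $T_I^A$ between the abelian categories $\Mod_{A}^{I-\mathrm{tors}}$ and $\LM$.

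The main obstacle is this compatibility of the functors across the three equivalences: one must verify that the canonical equivalences furnished by \Cref{thm:derder} intertwine the abstractly defined $\Gamma_I, \Lambda^I$ on $\Dtors(A), \Dcmpl(A)$ with the derived functors of $T_I^A, L_0^I$ on the concrete derived categories, in a manner respecting the monoidal structures. This requires unwinding the constructions in \Cref{thm:derder} --- in particular how $\Dcmpl(A)$ is presented as the right completion of $\cD^{-}(\LM)$ and how torsion objects are realized as $\cD(\Mod_{A}^{I-\mathrm{tors}})$ --- and matching them against the local duality equivalence. The most delicate point is the bookkeeping for the tensor product on the $L$-complete side, where the monoidal unit is $\Lambda^I A$ rather than $A$, so that the comparison of units and of the completed product must be handled with care.
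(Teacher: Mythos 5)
Your proposal is correct and takes essentially the same route as the paper: the paper obtains this corollary precisely by combining \Cref{thm:derder}(1) and \Cref{thm:derder}(2) in the discrete case with the local duality equivalence $\Gamma_I\colon \Dcmpl(A) \rightleftarrows \Dtors(A) \noloc \Lambda^I$ of \cite[Thm.~3.11]{bhv}, which is exactly your three-equivalence composite. The paper leaves the functor identification, the monoidality check, and the Koszul-complex reduction to general finitely generated ideals implicit, so your extra discussion of these points only fills in details the paper's one-line derivation omits.
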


Transferring the standard $t$-structure on $\cD(\LM)$ via the equivalence of \Cref{cor:tilting} induces a nonstandard $t$-structure on $\cD(\Mod_{A}^{I-\mathrm{tors}})$ whose heart is the abelian category of $L$-complete $A$-modules. Since the latter category is usually not equivalent to $\Mod_{A}^{I-\mathrm{tors}}$, \Cref{cor:tilting} is a non-trivial instance of a tilting equivalence. 

\begin{rem}
Due to the phenomena discussed in \Cref{sec:derivedcompletion} (see, for example \Cref{rem:negativegrouops}), we do not have an analogue of \Cref{thm:derder}(1) for non-discrete Hopf algebroids, and thus are currently unable to prove a version of  \Cref{cor:tilting} in this generality. 
\end{rem}

As we will see in the next subsections, the proof of \Cref{thm:derder}(2) is actually a consequence of an analogous statement for $\Stable_{\Psi}$ in place of the usual derived category. 

\subsection{$t$-structures and complete modules}

We start this subsection by recalling some material about $t$-structures. 
This concept was introduced by Beilinson--Bernstein--Deligne \cite{bbd}. We follow more closely the treatment given by \cite{ha} --- namely we work with homological indexing, so that $X[n]$ denotes the $n$-fold suspension $\Sigma^nX$. 

\begin{defn}
 A $t$-structure on a triangulated category $\cD$ consists of a pair of full subcategories $(\cD_{\ge 0}, \cD_{\le 0})$ such that
\begin{enumerate}
	\item $\cD_{\ge 0}[1] \subseteq \cD_{\ge 0}$ and $\cD_{\le 0}[-1] \subseteq \cD_{\le 0}$.
	\item For $X \in \cD_{\ge 0}$ and $Y \in \cD_{\le 0}[-1]$, we have $\Hom_{\cD}(X,Y) = 0$.
	\item If $X \in \cD$, then there is a triangle
	\[
A \to X \to B \to X[1]
	\]
	with $A \in \cD_{\ge 0}$ and $B \in \cD_{\le 0}[-1]$. 
\end{enumerate}
A $t$-structure on a stable $\infty$-category $\cC$ is then a $t$-structure on the associated homotopy category of $\cC$. 
 \end{defn} 

Given a $t$-structure on a stable $\infty$-category $\cC$ we can define the heart $\cC^{\heartsuit} = \cD_{\ge 0} \cap \cD_{\le 0} \subseteq \cC$.  We let $j \colon \cC^{\heartsuit} \to \cC$ denote the inclusion functor and define $\cC_{\ge n} = \cC_{\ge 0}[n]$ and $\cC_{\le n} = \cC_{0}[n]$. By \cite[Cor.~1.2.1.6]{ha} the inclusion $\cC_{\le n} \to \cC$ has a left adjoint $\tau_{\le n}$ and $\cC_{\ge n} \to \cC$ has a right adjoint $\tau_{\ge n}$.

Given a stable category $\cC$ equipped with a $t$-structure, the left completion of $\cC$ is defined to be the limit of the tower 
\[
\xymatrix{\cdots \ar[r] & \cC_{\le 2} \ar[r]^{\tau_{\le 1}} & \cC_{\le 1} \ar[r]^{\tau_{\le 0}} & \cC_{\le 0} \ar[r]^{\tau_{\le -1}} & \cdots}.
\]
We say that the $t$-structure on $\cC$ is left complete if $\cC$ is equivalent to its left completion. We can similarly define the right completion, and a right complete $t$-structure. 

The following lemma, proved in \cite[1.3.19]{bbd}, will be useful for constructing $t$-structures on full subcategories of $\cC$.
\begin{lem}\label{lem:tstructureinduced}
	Let $(\cD_{\ge 0},\cD_{\le 0})$ be a $t$-structure on $\cC$ with heart $\cC^{\heartsuit}$. Let $\cal{S} \subseteq \cC$ be a full stable subcategory of $\cC$. If $\tau_{\ge 0}M$ and $\tau_{\le 0}M$ are in $\cal{S}$ whenever $M$ is, then $(\cal{S} \cap \cD_{\ge 0}, \cal{S} \cap \cD_{\le 0})$ defines a $t$-structure on $\cal{S}$ with heart $\cal{S} \cap \cC^{\heartsuit}$. Moreover, the truncation functors for the induced $t$-structure on $\cal{S}$ are the same as those for the $t$-structure on $\cal{D}$. 
\end{lem}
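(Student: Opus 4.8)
The plan is to verify the three axioms in the definition above directly for the pair $(\cal{S}\cap\cD_{\ge 0},\cal{S}\cap\cD_{\le 0})$, making essential use of two facts: that $\cal{S}$ is a \emph{full} subcategory, so Hom-sets computed in $\cal{S}$ agree with those in $\cC$, and that $\cal{S}$ is \emph{stable}, hence closed under shifts and under the formation of fibers and cofibers.

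The first two axioms are formal. Closure under suspension holds because $\cal{S}[\pm1]=\cal{S}$ while $\cD_{\ge 0}[1]\subseteq\cD_{\ge 0}$ and $\cD_{\le 0}[-1]\subseteq\cD_{\le 0}$, so the same containments survive after intersecting with $\cal{S}$. For the orthogonality axiom, given $X\in\cal{S}\cap\cD_{\ge 0}$ and $Y\in(\cal{S}\cap\cD_{\le 0})[-1]\subseteq\cD_{\le 0}[-1]$, fullness gives $\Hom_{\cal{S}}(X,Y)=\Hom_{\cC}(X,Y)$, which vanishes by the orthogonality axiom of the ambient $t$-structure.

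The only step requiring the hypothesis on the truncations is the production of the defining triangle inside $\cal{S}$. For $X\in\cal{S}$ I would take the canonical triangle $\tau_{\ge 0}X\to X\to\tau_{\le -1}X$ of the ambient $t$-structure. The fiber term $\tau_{\ge 0}X$ lies in $\cal{S}$ by hypothesis. For the cofiber term, the shift identity $\tau_{\le -1}X\simeq(\tau_{\le 0}(X[1]))[-1]$ together with $X[1]\in\cal{S}$ shows $\tau_{\le 0}(X[1])\in\cal{S}$, whence $\tau_{\le -1}X\in\cal{S}$ as $\cal{S}$ is closed under shifts. All three vertices therefore lie in the full stable subcategory $\cal{S}$, so this is a triangle in $\cal{S}$ exhibiting $X$ as an extension of $\tau_{\le -1}X\in(\cal{S}\cap\cD_{\le 0})[-1]$ by $\tau_{\ge 0}X\in\cal{S}\cap\cD_{\ge 0}$, which is precisely the third axiom.

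Finally, I would deduce the statements about truncation functors and the heart. Because the triangle just constructed is the truncation triangle for the induced $t$-structure, uniqueness of such triangles identifies $\tau^{\cal{S}}_{\ge 0}$ and $\tau^{\cal{S}}_{\le 0}$ with the restrictions of $\tau_{\ge 0}$ and $\tau_{\le 0}$, and the same holds after shifting for all $n$. The heart is then $(\cal{S}\cap\cD_{\ge 0})\cap(\cal{S}\cap\cD_{\le 0})=\cal{S}\cap\cC^{\heartsuit}$ by inspection. No genuine obstacle arises; the one place demanding care is the bookkeeping in the cofiber term, namely converting the hypothesis on $\tau_{\le 0}$ into the needed statement about $\tau_{\le -1}$ via the shift identity, which is exactly what stability of $\cal{S}$ supplies.
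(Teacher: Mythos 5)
Your proof is correct. The paper does not actually prove this lemma itself --- it simply cites Beilinson--Bernstein--Deligne [1.3.19] --- and your direct verification of the three axioms (fullness for orthogonality, stability plus the shift identity $\tau_{\le -1}X \simeq (\tau_{\le 0}(X[1]))[-1]$ for the decomposition triangle, and uniqueness of truncation triangles for the identification of the truncation functors and the heart) is precisely the standard argument given in that reference.
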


We now begin the proof of \Cref{thm:derder}(1). We always assume that the ideal $I$ is generated by a weakly proregular sequence, so that \Cref{thm:gm_localhomology} applies. The category $\LM$ of $L$-complete $A$-modules is abelian; however it is not Grothendieck abelian in general because direct sums and filtered colimits are not exact. There exist enough projectives in $\LM$ by \Cref{prop:lcomplete} and so by \cite[Sec.~1.3.2]{ha} we can associate to it the right bounded derived category $\cD^{-}(\LM)$. This category comes equipped with a natural left complete $t$-structure whose heart is equivalent to $\LM$, see \cite[Prop.~1.3.2.19]{ha} and \cite[Prop.~1.3.3.16]{ha}.

 The following results show that the right completion of this bounded derived category is naturally equivalent to the derived category $\Dcmpl(A)$ of $I$-complete $A$-modules constructed abstractly in \Cref{sec:torcomplete}. We assume that $A$ and $I$ satisfy the conditions of \Cref{thm:gm_localhomology} so that local homology computes the derived functors of completion. 
\begin{prop}\label{prop:ffcomp}
  There is a $t$-structure on $\cD^{-}(\LM)$ along with a fully faithful $t$-exact inclusion $\theta \colon \cD^{-}(\LM) \hookrightarrow \Dcmpl(A)$, whose image consists of the right bounded objects of $\Dcmpl(A)$, i.e., $\bigcup (\Dcmpl(A))_{\ge n}$.
\end{prop}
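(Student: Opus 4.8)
The plan is to realize $\theta$ as the total derived functor of the inclusion $\LM \hookrightarrow \Mod_A$ and then to pin down its essential image by means of the local homology spectral sequence; I will proceed by constructing $\theta$, checking full faithfulness and $t$-exactness, and computing the image. As preliminary input I would record two facts. By \Cref{cor:Lamdalocal} specialized to the discrete Hopf algebroid $(A,A)$ (equivalently, by the proof of \Cref{thm:Lcompletecriterion}), a module $M$ is $L$-complete if and only if $\Lambda^I M \simeq M$, that is, if and only if $M$ placed in degree $0$ is a complete object of $\cD(A) \simeq \Stable_A$. Combined with \Cref{thm:gm_localhomology}(1) this also gives $L_s M = 0$ for $s \ge 1$ whenever $M$ is $L$-complete, and feeding this vanishing into the long exact sequence of \Cref{prop:lcomplete} shows that the fully faithful inclusion $\LM \hookrightarrow \Mod_A$ is \emph{exact}, not merely left exact. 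Since $\LM$ has enough projectives (\Cref{prop:lcomplete}), the right-bounded derived $\infty$-category $\cD^-(\LM)$ has the universal property of \cite[\S1.3.2]{ha}, and deriving this exact inclusion produces an exact functor $\theta\colon \cD^-(\LM) \to \cD(A)$ restricting to $\LM \hookrightarrow \Mod_A$ on hearts. Exactness of the inclusion makes homology in $\cD^-(\LM)$ agree with homology in $\cD(A)$, so $\theta$ is $t$-exact for the natural $t$-structure on $\cD^-(\LM)$ and the one on $\Dcmpl(A)$ induced from $\cD(A)$ via \Cref{lem:tstructureinduced}; in particular $\theta X$ is bounded below with $L$-complete homology $H_s(\theta X)\cong H_s X$. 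Such an object is complete: each $H_s(\theta X)[s]$ is complete by the preliminary step, a bounded object with $L$-complete homology is a finite extension of these, and $\theta X = \lim_n \tau_{\le n}\theta X$ is then a limit of complete objects, which is complete since completion is a Bousfield localization (see the completion adjunction preceding \eqref{eq:ldmod}). Hence $\theta$ factors through $\Dcmpl(A)$.

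For full faithfulness, by exactness and a standard dévissage it suffices to show $\theta$ induces an isomorphism $\Hom_{\cD^-(\LM)}(P, Y[s]) \to \Hom_{\cD(A)}(P, Y[s])$ for every projective $P \in \LM$, every $Y \in \LM$, and all $s \in \Z$. The left-hand side vanishes for $s\neq 0$ and is $\Hom_{\LM}(P,Y)$ for $s=0$, as $P$ is projective. For the right-hand side I would realize $P$ as a retract of $L_0 F = \Lambda^I F$ with $F$ free (here $\Lambda^I F \simeq L_0 F$ because $F$ is flat) and use that $\Lambda^I$ is a localization with $Y$ a local object, so that $\RHom_A(P,Y)$ is a retract of $\RHom_A(\Lambda^I F, Y) \simeq \RHom_A(F,Y)$. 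The latter is concentrated in degree $0$ and equals $\Hom_A(F,Y)\cong\Hom_{\LM}(P,Y)$ by the defining adjunction of $L_0$, giving the isomorphism.

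It remains to compute the image. Since $\theta$ is $t$-exact and $\cD^-(\LM)$ is bounded below, the image lies in $\bigcup_n (\Dcmpl(A))_{\ge n}$. For the converse, take $M \in (\Dcmpl(A))_{\ge d}$. I would first show each $H_s M$ is $L$-complete by running the local homology spectral sequence of \Cref{prop:homology_ss}(2), which by \Cref{thm:gm_localhomology}(2) takes the form $E^2_{s,t}\cong L_s(H_t M) \Rightarrow H_{s+t}(\Lambda^I M)=H_{s+t}M$: in the bottom total degree $d$ the only contributing term $E^2_{0,d}=L_0(H_d M)$ admits no nonzero differentials in or out, so $H_d M \cong L_0(H_d M)$ is $L$-complete; replacing $M$ by $\tau_{\ge d+1}M = \fib(M \to H_d(M)[d])$, which is again bounded below and complete, an induction shows every $H_s M$ is $L$-complete. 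Then $M=\lim_n \tau_{\le n}M$ is a convergent Postnikov tower of iterated extensions of the objects $H_s(M)[s]$, each of which lies in the essential image; since $\theta$ is fully faithful and $t$-exact and both categories are left complete on bounded-below objects, the matching tower in $\cD^-(\LM)$ has a limit carried by $\theta$ onto $M$, so $M$ is in the image.

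I expect the surjectivity of $\theta$ onto all of $\bigcup_n (\Dcmpl(A))_{\ge n}$ to be the main obstacle. The genuinely new input is the spectral-sequence computation identifying the homology of a \emph{bounded-below} complete object as $L$-complete, together with the subsequent need to match the left completions of $\cD^-(\LM)$ and $\Dcmpl(A)$ so that the Postnikov tower $(\tau_{\le n}M)$ lifts to a convergent tower in $\cD^-(\LM)$. By contrast, the $\Ext$-vanishing underlying full faithfulness is essentially formal once one invokes the localization property of $\Lambda^I$, and the exactness of $\LM \hookrightarrow \Mod_A$ reduces $t$-exactness to a bookkeeping statement about homology.
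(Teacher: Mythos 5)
Your proposal is correct in substance, and its two load-bearing inputs coincide with the paper's: (i) the local homology spectral sequence of \Cref{prop:homology_ss} to pass between $I$-complete objects of $\cD(A)$ and $L$-complete homology, and (ii) the vanishing $\Ext^i(P,Y)=0$ for $i>0$ with $P$ projective in $\LM$ and $Y\in\LM$, proved exactly as you do it --- write $P$ as a retract of $L_0F\simeq \Lambda^IF$ for $F$ free and use the localization adjunction to replace $\Lambda^IF$ by $F$. The difference is in the packaging. The paper first uses the spectral sequence to show that $\tau_{\ge n}$ and $\tau_{\le n}$ preserve $\Dcmpl(A)$, so that \Cref{lem:tstructureinduced} yields an induced $t$-structure with heart $\LM$, proves this $t$-structure is left and right complete, and then cites \cite[Prop.~1.3.3.7]{ha}, which simultaneously produces $\theta$, reduces full faithfulness to the $\Ext$-vanishing above, and identifies the essential image with the right bounded objects. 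You instead re-derive the content of that proposition by hand: you build $\theta$ by deriving the exact inclusion $\LM\hookrightarrow \Mod_A$ (note that exactness of this inclusion is precisely the assertion that $\LM$ is an \emph{abelian subcategory}, i.e.\ \Cref{prop:lcomplete}; your long-exact-sequence sketch really only gives closure under extensions, so cite that instead), you prove full faithfulness by d\'evissage to projectives, and you prove essential surjectivity onto $\bigcup_n(\Dcmpl(A))_{\ge n}$ by an induction with the spectral sequence combined with a Postnikov-tower/left-completeness argument. What your route buys is a self-contained argument that makes explicit where bounded-belowness and left completeness enter; your image step is in effect also a proof of the ``moreover'' clause of \Cref{thm:derder}(1) for bounded-below objects. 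What the paper's route buys is brevity and, more importantly, the \emph{right} completeness of the induced $t$-structure on $\Dcmpl(A)$, which your argument never needs here but which is exactly the ingredient reused immediately afterwards (via \cite[Rem.~1.2.1.18]{ha}) to identify $\Dcmpl(A)$ with the right completion of $\cD^{-}(\LM)$ in the proof of \Cref{thm:derder}(1); following your route, that completeness statement would still have to be established separately.
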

\begin{proof}
	We begin by observing that the standard $t$-structure on $\cD(A)$ is left and right complete. Indeed, $\cD(A)$ is right complete by \cite[Prop.~1.3.5.21]{ha}, while left completeness follows, for example, from \cite[Prop.~7.1.1.13]{ha} and the equivalence $\Mod_{HA} \simeq \cD(A)$ \cite[Prop.~7.1.1.15 and Rem.~7.1.1.16]{ha}. The local homology spectral sequence of \Cref{prop:homology_ss} shows that if $M \in \cD(A)$ is $I$-complete, then so are the truncations $\tau_{\ge n}M$ and $\tau_{\le n}M$. It follows from \Cref{lem:tstructureinduced} that there is an induced $t$-structure on $\Dcmpl(A)$. 

	We claim this induced $t$-structure is both left and right complete. Indeed, recall that the truncation functors on $\Dcmpl(A)$ are the restriction of the truncation functors of $\cD(A)$. Since limits in $\Dcmpl(A)$ are the same as those in $\cD(A)$, we easily see that left completeness of the induced $t$-structure follows from left completeness of the $t$-structure on $\cD(A)$. On the other hand, the colimit in $\Dcmpl(A)$ is not the same as that in $\cD(A)$ --- it is given by first taking the colimit in $\cD(A)$, and then applying $\Lambda^I$. However, by right completeness of the $t$-structure on $\cD(A)$ we already have that $M \simeq \colim_k \tau_{\ge k}M$,
	for any $M \in \Dcmpl(A)$, where the colimit is taken over the maps $\tau_{\ge k}M \xr{\tau_{\ge (k-1)}} \tau_{\ge (k-1)}M$.  It follows that the induced $t$-structure on $\Dcmpl(A)$ is right complete.

	Since the induced $t$-structure on $\Dcmpl(A)$ has heart $\LM$, applying \cite[Prop.~1.3.3.7]{ha} we deduce the existence of a $t$-exact functor $\theta \colon \cD^{-}(\LM) \to \Dcmpl(A)$. The same proposition shows that $\theta$ is fully faithful if and only if for each pair $X,Y \in \LM$ with $X$ projective, the groups $\Ext^i_{\Dcmpl(A)}(X,Y) = 0$ for $i > 0$. By the characterization of projectives in \Cref{prop:lcomplete} we have that $X$ is a retract of $L_0F$ for some free $A$-module $F$, and so we can assume $X$ has this form. For a free module $F$, we have that $L_iF \simeq 0$ for $i > 0$, as $L_i$ can be computed by taking a projective resolution. Thus, the local homology spectral sequence shows that $\Lambda^IF \simeq L_0F$, concentrated in degree 0. It follows that 
  \[
\Ext^i_{\Dcmpl(A)}(X,Y) \cong \Ext^i_{\Dcmpl(A)}(\Lambda^IF,Y) \cong \Ext^i_{\cD(A)}(F,Y) \cong \Ext_A^i(F,Y)
  \]
as $F,Y$ are just $A$-modules. Since $F$ is a free $A$-module, this is zero for $i > 0$. 
\end{proof}

\begin{proof}[Proof of \Cref{thm:derder}(1)]
	Since $\Dcmpl(A)$ is right complete, the previous proposition and \cite[Rem.~1.2.1.18]{ha} show that there is a canonical equivalence between the right completion of $\cD^{-}(\LM)$ and $\Dcmpl(A)$.

	For the second part suppose that $M\in\Dcmpl(A)$, i.e., that  $\Lambda^IM\simeq M$. The spectral sequence of \Cref{prop:homology_ss} converging to $H_*(\Lambda^IM)$ has $E^2$ page in $\LM$. Since the spectral sequence has a horizontal vanishing line and $\LM$ is abelian by \Cref{prop:lcomplete}, the abutment is in the latter category as well. 
	For the converse, let $M\in\cD(A)$ be a complex whose homology is $L$-complete. The aforementioned spectral sequence collapses to give an isomorphism $H_*(\Lambda^IM)\cong H_*(M)$. This implies that the natural map $M \to \Lambda^IM$ is a quasi-isomorphism, from where it follows that $\Lambda^I M\simeq M$.
\end{proof}

\subsection{Torsion comodules}

A similar argument as given in the previous section for complete modules also works for torsion comodules using the left bounded derived category $\cD^{+}(\Comod_{\Psi}^{I-\mathrm{tors}})$ \cite[Var.~1.3.2.8]{ha}. By the dual of \cite[Prop.~1.3.5.24]{ha} the left bounded derived category of $\Comod_{\Psi}^{I-\mathrm{tors}}$ can be identified as the full subcategory of $\cD(\Comod_{\Psi}^{I-\mathrm{tors}})$ spanned by the left bounded objects (where we equip $\cD(\Comod_{\Psi}^{I-\mathrm{tors}})$ with the standard $t$-structure \cite[Def.~1.3.5.16 and Prop.~1.3.5.18]{ha}.)

In order to prove \Cref{thm:derder}(2) we first need to introduce another category.
\begin{defn}
We denote by $\Dhtors({\Psi})$ the subcategory of complexes of comodules with cohomology in $\Comod^{I-\mathrm{tors}}_{\Psi}$.
\end{defn}
In the case of a discrete Hopf algebroid $(A,A)$ it is easy to identify $\Dtors(A)$ with $\Dhtors(A)$.  
\begin{prop}\label{prop:torHomo}
	Let $A$ be a commutative ring and $I$ a finitely generated ideal. There is an equivalence of categories
	\[
\Dhtors(A) \simeq \Dtors(A). 
	\]
\end{prop}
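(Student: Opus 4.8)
The plan is to establish the two inclusions $\Dtors(A)\subseteq\Dhtors(A)$ and $\Dhtors(A)\subseteq\Dtors(A)$ separately. Throughout I identify $\cD(A)$ with $\Stable_A$ via \Cref{prop:stableprops}, so that the smashing colocalization $\Gamma_I$ is available on $\cD(A)$ with essential image $\Dtors(A)=\Loc^{\otimes}_{\cD(A)}(A/I)$. By the general colocalization formalism used in the excerpt, $\Gamma_IX$ always lies in $\Dtors(A)$, and an object $X$ belongs to $\Dtors(A)$ if and only if the counit $\Gamma_IX\to X$ is an equivalence. The problem therefore reduces to showing that $\Gamma_IX\simeq X$ precisely when every homology module $H_*X$ is $I$-torsion.

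For the inclusion $\Dtors(A)\subseteq\Dhtors(A)$, I would show directly that $\Dhtors(A)$ is a localizing tensor ideal containing $A/I$; minimality of $\Loc^{\otimes}_{\cD(A)}(A/I)$ then yields the inclusion. Containment of $A/I$ is immediate. Since the $I$-torsion modules form a Serre subcategory of $\Mod_A$ closed under arbitrary coproducts and filtered colimits, and since homology commutes with shifts, coproducts and filtered colimits and carries cofiber sequences to long exact sequences, the subcategory $\Dhtors(A)$ is closed under shifts, cofibers and all colimits, hence localizing. For the tensor-ideal property, fix $M\in\Dhtors(A)$ and set $\mathcal{C}_M=\{N\in\cD(A):M\otimes N\in\Dhtors(A)\}$; because $-\otimes M$ preserves colimits and $\Dhtors(A)$ is localizing, $\mathcal{C}_M$ is a localizing subcategory, and it contains the compact generator $A$ since $M\otimes A\simeq M$. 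Thus $\mathcal{C}_M=\cD(A)$, so $\Dhtors(A)$ is a tensor ideal.

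For the reverse inclusion I would use the strongly convergent local cohomology spectral sequence $E_2^{s,t}=H^s_I(H^t X)\Rightarrow H^{s+t}(\Gamma_IX)$ of \Cref{prop:homology_ss}(1). The crucial input is that derived torsion is the identity on an underived torsion module: if $N$ is $I$-torsion, then $\Gamma_IN\simeq N$, equivalently $H^s_I(N)=0$ for $s>0$ and $H^0_I(N)=N$. I would prove this by computing $\Gamma_I$ with the stable Koszul (\v{C}ech) complex on the generators $x_1,\dots,x_n$, which is legitimate for a finitely generated ideal by \cite[Rem.~3.10, Thm.~3.11]{bhv}: since $x_i\in I$ forces $x_i^k\in I^k$, every $I$-torsion module is $x_i$-power torsion, so $N[x_i^{-1}]=0$ for all $i$, whence every positive \v{C}ech term vanishes and $\Gamma_IN\simeq N$ sits in degree $0$. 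Granting this, for $X\in\Dhtors(A)$ each $H^t X$ is torsion, so the $E_2$-page is concentrated on the line $s=0$ with $E_2^{0,t}=H^t X$; the spectral sequence collapses, and the edge homomorphism, induced by the natural map $\Gamma_IX\to X$, shows that this comparison is an isomorphism on homology, hence an equivalence. Therefore $X\simeq\Gamma_IX\in\Dtors(A)$.

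I expect the main obstacle to be exactly the sub-lemma $\Gamma_IN\simeq N$ for torsion $N$, i.e. the vanishing of higher local cohomology of a torsion module: here $I$ is only assumed finitely generated and no weak proregularity is available, so the derived torsion functor must be handled via its Koszul/\v{C}ech model rather than by an Artin--Rees or Noetherian argument. Everything else is formal. As a sanity check, this equivalence $\Dhtors(A)\simeq\Dtors(A)$ can alternatively be extracted from the characterization of cohomologically $I$-torsion complexes in \cite[Cor.~3.32]{psy}, which matches the conclusion above.
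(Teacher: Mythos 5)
Your proposal is correct, and it is half the paper's argument and half a genuinely different one. The paper proves this proposition in one line, by transposing the spectral-sequence argument from the second part of the proof of \Cref{thm:derder}(1) to the torsion setting: both inclusions are read off from the local cohomology spectral sequence of \Cref{prop:homology_ss}(1). Your treatment of the inclusion $\Dhtors(A)\subseteq\Dtors(A)$ is exactly that argument --- collapse of the spectral sequence, then the observation that the resulting isomorphism on homology is induced by the counit $\Gamma_IX\to X$ --- and you add value by making explicit the sub-lemma the paper leaves implicit, namely that $H^0_I(N)=N$ and $H^s_I(N)=0$ for $s>0$ when $N$ is $I$-torsion, proved via the \v{C}ech model for $\Gamma_I$, which is indeed the right model for a merely finitely generated ideal. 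Where you genuinely diverge is the inclusion $\Dtors(A)\subseteq\Dhtors(A)$: the paper runs the spectral sequence again, using that the modules $H^s_I(H^tX)$ on the $E_2$-page are $I$-torsion and that the $I$-torsion modules form a Serre subcategory, so that strong convergence forces the abutment $H^*(\Gamma_IX)\cong H^*X$ to be torsion; you instead check that $\Dhtors(A)$ is a localizing tensor ideal containing $A/I$ and invoke minimality of $\Loc^{\otimes}_{\cD(A)}(A/I)$. Your formal argument is more elementary, since it avoids the fact that $H^s_I(N)$ is $I$-torsion for an \emph{arbitrary} module $N$, which for a general finitely generated ideal needs its own small argument (e.g., $H^s_I(N)[x_i^{-1}]=0$ because the \v{C}ech complex becomes contractible after inverting $x_i$). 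The paper's argument, on the other hand, is uniform with the completion case --- which is precisely what allows the paper to prove the proposition by reference --- and it is the only one of the two that transposes to that case: there is no analogue of your generation argument for $\Dcmpl(A)$, which is a colocalizing subcategory rather than a localizing ideal generated by $A/I$. One caveat on your closing sanity check: \cite[Cor.~3.32]{psy} is established for weakly proregular ideals, so it recovers the statement only in that generality, not for arbitrary finitely generated $I$.
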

\begin{proof}
Recalling again that $\Comod_{\Psi}^{I-\mathrm{tors}}$ is Grothendieck abelian, this is the same argument as in the second part of the proof of \Cref{thm:derder}(1) given above.
\end{proof}

The case of an arbitrary Hopf algebroid is more difficult, and involves passing to the larger category $\Stable_{\Psi}$. The reason for this is that we do not know how to construct the local cohomology spectral sequence of \Cref{lem:sscomodules} in $\cD(\Psi)$. 
\begin{prop}\label{prop:cohomequivalence}
	There is an equivalence of categories 
	\[
\Dhtors(\Psi) \simeq \Dtors(\Psi). 
	\]
\end{prop}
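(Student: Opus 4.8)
The plan is to show that the two subcategories coincide inside $\D(\Psi)$ by proving the two inclusions separately. The forward inclusion $\Dtors(\Psi)\subseteq\Dhtors(\Psi)$ is formal, whereas for the reverse one the obstruction flagged before the statement forces us to leave $\D(\Psi)$: the local cohomology spectral sequence of \Cref{lem:sscomodules} only exists in $\Stable_{\Psi}$, so the heart of the argument takes place there and is then transported back along the adjunction $\omega\dashv\iota_*$.

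For $\Dtors(\Psi)\subseteq\Dhtors(\Psi)$ it suffices to check that $\Dhtors(\Psi)$ is a localizing tensor ideal containing $A/I$, since $\Dtors(\Psi)=\Loc^{\otimes}_{\D(\Psi)}(A/I)$ is the smallest such. It contains $A/I$; it is localizing because $\Comod_{\Psi}^{I-\mathrm{tors}}$ is a Serre subcategory of $\Comod_{\Psi}$ closed under colimits \cite[Prop.~5.10]{bhv}, so the class of complexes with $I$-torsion cohomology is stable under shifts, cofibres (long exact sequence) and filtered colimits (homology commutes with these). For the tensor-ideal property, fix $X\in\Dhtors(\Psi)$; the full subcategory of $W\in\D(\Psi)$ with $X\otimes W\in\Dhtors(\Psi)$ is localizing and contains every dualizable $G\in\cG_{\Psi}$, since $G$ is flat over $A$ and hence $H^{t}(X\otimes G)\cong H^{t}(X)\ctimes G$ is again $I$-torsion. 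By \Cref{prop:dpsigenerators} the $\cG_{\Psi}$ generate $\D(\Psi)$, so this subcategory is everything.

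For the reverse inclusion, let $X\in\Dhtors(\Psi)$ and consider the localization fibre sequence $W\to X\to Y$ attached to the localizing subcategory $\Dtors(\Psi)$, so that $W\in\Dtors(\Psi)$ and $Y$ lies in the right orthogonal $\Dtors(\Psi)^{\perp}$. It suffices to prove $Y\simeq 0$. By the first part $W\in\Dhtors(\Psi)$, and $X\in\Dhtors(\Psi)$, so the long exact sequence shows $Y$ has $I$-torsion cohomology as well. I would then transfer $Y$ to $\Stable_{\Psi}$: for $S\in\Stable_{\Psi}^{I-\mathrm{tors}}$ the adjunction gives $\Map_{\Stable_{\Psi}}(S,\iota_*Y)\simeq\Map_{\D(\Psi)}(\omega S,Y)$, which vanishes because $\omega S\in\Dtors(\Psi)$ by \Cref{lem:localomega} and $Y\in\Dtors(\Psi)^{\perp}$. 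Hence $\iota_*Y$ is $I$-local and $\Gamma_I^{\Psi}\iota_*Y\simeq 0$. Since $\iota_*$ is $t$-exact, the cohomology comodules $H^{t}(\iota_*Y)\cong H^{t}(Y)$ are $I$-torsion, so feeding them into \Cref{lem:sscomodules} and using that a hereditary torsion class makes torsion comodules $T_I^{\Psi}$-acyclic, one gets $\mathbf{R}^{s}T_I^{\Psi}(H^{t}(\iota_*Y))=0$ for $s>0$ and $=H^{t}(\iota_*Y)$ for $s=0$. The spectral sequence collapses onto the line $s=0$ and abuts to $H^{*}(\Gamma_I^{\Psi}\iota_*Y)=0$, forcing $\epsilon_* H^{t}(\iota_*Y)=0$, hence $\epsilon_* H^{t}(Y)=0$ for all $t$. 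As the forgetful functor on comodules is conservative, $H^{t}(Y)=0$ for all $t$, and since homology is conservative in the genuine derived category $\D(\Psi)$ we conclude $Y\simeq 0$ and $X\simeq W\in\Dtors(\Psi)$.

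The main obstacle is precisely the mismatch between the two categories: homology need not detect equivalences on $\Stable_{\Psi}$ (non-hypercomplete objects with vanishing homology could exist), so one cannot simply prove $\Gamma_I^{\Psi}\iota_*X\simeq\iota_*X$ there. The device that circumvents this, in contrast to the discrete case of \Cref{prop:torHomo}, is to apply the spectral sequence not to $X$ but to the \emph{local} complement $Y$: locality is exported to $\Stable_{\Psi}$ via \Cref{lem:localomega}, the spectral sequence yields vanishing of $\epsilon_*$-cohomology, and this vanishing is read off back in $\D(\Psi)$, where homology does see the zero object. The two auxiliary facts I relied on about the $t$-structures — that $\iota_*$ is $t$-exact and that higher derived torsion vanishes on torsion comodules — are routine but should be recorded carefully.
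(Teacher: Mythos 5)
Your proof is correct, and the two key external inputs you rely on (the spectral sequence of \Cref{lem:sscomodules} applied to an object with $I$-torsion cohomology, and \Cref{lem:localomega} to transport torsion objects along $\omega$) are exactly the ones the paper uses; the forward inclusion $\Dtors(\Psi)\subseteq\Dhtors(\Psi)$ is essentially identical to the paper's. Where you genuinely diverge is the reverse inclusion. The paper argues directly: for $X\in\Dhtors(\Psi)$ it applies the spectral sequence to $\iota_*X$ itself, concluding that $\Gamma_I^{\Psi}\iota_*X\to\iota_*X$ is a \emph{cohomology} equivalence, and then applies $\omega$ --- which inverts quasi-isomorphisms by construction --- to obtain an honest equivalence $\omega\Gamma_I^{\Psi}\iota_*X\simeq X$ in $\D(\Psi)$, with the left-hand side in $\Dtors(\Psi)$ by \Cref{lem:localomega}. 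Note that this already resolves the obstacle you flag at the end: one never needs homology to detect equivalences in $\Stable_{\Psi}$, only in $\D(\Psi)$ after applying $\omega$. Your route instead splits $X$ by the Bousfield localization attached to $\Dtors(\Psi)$ and kills the local complement $Y$ by applying the spectral sequence to $\iota_*Y$ (whose $\Gamma_I^{\Psi}$ vanishes by the orthogonality-plus-adjunction argument). This is a valid alternative; its cost is that you must also justify the existence of the localization fibre sequence $W\to X\to Y$, i.e.\ that the inclusion $\Dtors(\Psi)\hookrightarrow\D(\Psi)$ admits a right adjoint --- standard, since $\Dtors(\Psi)=\Loc_{\D(\Psi)}(A/I\otimes\cG_{\Psi})$ is generated by a set of objects in a presentable stable $\infty$-category, but it should be said. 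Its benefit is that it makes the orthogonality structure explicit and exhibits $X\simeq W$ as the acyclization of $X$, whereas the paper's argument is shorter and identifies the torsion model of $X$ concretely as $\omega\Gamma_I^{\Psi}\iota_*X$. Both proofs share the unstated facts that $H^t(\iota_*X)\cong H^t(X)$ and that $I$-torsion comodules are $T_I^{\Psi}$-acyclic, so neither of those is a gap specific to your write-up.
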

\begin{proof}
	We first show that $\Dhtors(\Psi)$ is a localizing subcategory of $\cD(\Psi)$. Indeed, it is clear that it is closed under desuspension, and since $\Comod_{\Psi}$ is Grothendieck abelian and $\Comod_{\Psi}^{I-\mathrm{tors}}$ is closed under colimits, $\Dhtors(\Psi)$ is also closed under colimits. 

	Now suppose that $M \in \Dhtors(\Psi)$. We claim that $M \otimes G \in \Dhtors(\Psi)$ for each dualizable $\Psi$-comodule $G$. First observe that $H^*(M \otimes G) \cong H^*(M) \otimes G$. Then, since $G$ is dualizable, we have (for example, using \cite[Lem.~5.5]{bhv}) $T_I^{\Psi}(H^*M \otimes G) \cong T_I^{\Psi}(H^*M) \otimes G \cong H^*M \otimes G$. It follows that $M \otimes G \in \Dhtors(\Psi)$ as claimed.

	Since $A/I \in \Dhtors(\Psi)$ the previous paragraphs show that $\Dhtors(\Psi)$ is a localizing subcategory of $\cD(\Psi)$ containing $A/I \otimes G$ for each dualizable comodule $G$; in particular, we deduce that $\Dtors(\Psi) \subseteq \Dhtors(\Psi)$

	Now suppose that $X \in \Dhtors(\Psi)$, so that $\iota_* X \in \Stable_{\Psi}$. By the construction of the category $\Dhtors(\Psi)$ the local cohomology spectral sequence of \Cref{lem:sscomodules} collapses for $\iota_* X$ to show that $\Gamma_I^{\Psi}\iota_*X \to \iota_*X$ is a cohomology equivalence. It follows that there is a quasi-isomorphism $\omega  \Gamma_{I}^{\Psi}\iota_*X \xr{\sim} \omega  \iota_*X \simeq X$, where the last equivalence follows because $\iota_*$ is fully faithful. By definition $\Gamma_I^{\Psi}\iota_*X \in \Stable_{\Psi}^{I-\mathrm{tors}}$, so that $\omega \Gamma_I^{\Psi}\iota_*X \simeq X \in \Dtors(\Psi)$ by \Cref{lem:localomega}. 
\end{proof}

With this in mind, we can prove a bounded version of \Cref{thm:derder}(2).

\begin{prop}\label{prop:ff}
	 There is a $t$-structure on $\cD^+(\Comod_{\Psi}^{I-\mathrm{tors}})$ along with a fully faithful functor $\theta \colon \cD^+(\Comod_{\Psi}^{I-\mathrm{tors}}) \hookrightarrow \Dtors(\Psi)$, whose image consists of the left bounded objects of $\Dtors(\Psi)$, i.e., $\bigcup (\Dtors(\Psi))_{\le n}$.
\end{prop}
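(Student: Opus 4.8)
The plan is to run the exact dual of the argument for \Cref{prop:ffcomp}, trading projectives for injectives and completion for torsion. First I would install a $t$-structure on the target. By \Cref{prop:cohomequivalence} we may replace $\Dtors(\Psi)$ by $\Dhtors(\Psi)$, the full subcategory of $\cD(\Psi)$ spanned by complexes whose cohomology comodules are $I$-torsion. Since the forgetful functor $\epsilon_*$ is exact and $I$-torsion $A$-modules are closed under subobjects and quotients, the subcategory $\Comod_{\Psi}^{I-\mathrm{tors}} \subseteq \Comod_{\Psi}$ is closed under subquotients; hence for $M \in \Dhtors(\Psi)$ the standard truncations $\tau_{\ge n}M$ and $\tau_{\le n}M$ again lie in $\Dhtors(\Psi)$, their cohomology being subquotients of that of $M$. \Cref{lem:tstructureinduced} then endows $\Dtors(\Psi) \simeq \Dhtors(\Psi)$ with an induced $t$-structure whose heart is $\Comod_{\Psi}^{I-\mathrm{tors}}$ and whose truncations are those of $\cD(\Psi)$.

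Next, since $\Comod_{\Psi}^{I-\mathrm{tors}}$ is Grothendieck abelian it has enough injectives, so the dual of \cite[Prop.~1.3.3.7]{ha} applies to the heart of this $t$-structure and produces a $t$-exact functor $\theta \colon \cD^{+}(\Comod_{\Psi}^{I-\mathrm{tors}}) \to \Dtors(\Psi)$ extending the equivalence on hearts, with essential image the left bounded objects $\bigcup (\Dtors(\Psi))_{\le n}$. (As in \Cref{prop:ffcomp}, one checks the induced $t$-structure is left and right complete, so that this image statement is unambiguous.) The same proposition reduces full faithfulness of $\theta$ to the vanishing
\[ \Ext^q_{\Dtors(\Psi)}(X,Y) = 0 \quad (q>0) \]
for all $X,Y \in \Comod_{\Psi}^{I-\mathrm{tors}}$ with $Y$ injective. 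Because $\Dtors(\Psi) \subseteq \cD(\Psi)$ is a full subcategory and $X,Y$ are concentrated in degree $0$, these groups coincide with the Yoneda groups $\Ext^q_{\Comod_{\Psi}}(X,Y)$.

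The heart of the matter — and the step I expect to be the main obstacle — is thus to show that an object $Y$ injective in $\Comod_{\Psi}^{I-\mathrm{tors}}$ has no higher $\Ext$ in $\Comod_{\Psi}$ against torsion comodules. I would extract this from the derived adjunction $\cD(\iota) \dashv \mathbf{R}T_I^{\Psi}$ coming from the exact inclusion $\iota \colon \Comod_{\Psi}^{I-\mathrm{tors}} \hookrightarrow \Comod_{\Psi}$ and its right adjoint $T_I^{\Psi}$: for $X$ in the torsion category it gives
\[ \RHom_{\cD(\Psi)}(X,Y) \simeq \RHom_{\cD(\Comod_{\Psi}^{I-\mathrm{tors}})}(X, \mathbf{R}T_I^{\Psi}Y), \]
where on the left $Y$ is regarded in $\Comod_{\Psi}$. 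Hence it suffices to prove that $Y$ is $T_I^{\Psi}$-acyclic, for then the right-hand side reduces to $\RHom_{\cD(\Comod_{\Psi}^{I-\mathrm{tors}})}(X,Y)$, which is concentrated in degree $0$ because $Y$ is injective in the torsion category. To verify acyclicity I would reduce to a manageable class of injectives: choosing an injective hull $Y \hookrightarrow J$ in $\Comod_{\Psi}$ and applying the left-exact functor $T_I^{\Psi}$ realizes $Y$ as a retract of $T_I^{\Psi}J$, so, since acyclicity passes to retracts, it is enough to treat $T_I^{\Psi}J$ with $J$ injective. Now $J$ is $T_I^{\Psi}$-acyclic, so $\mathbf{R}T_I^{\Psi}J \simeq T_I^{\Psi}J$ sits in degree $0$; and because $I$ is generated by a regular sequence the derived torsion functor $\mathbf{R}T_I^{\Psi}$ on $\cD(\Psi)$ is smashing, hence idempotent — this is precisely where the regular-sequence hypothesis enters, and idempotency transfers from the smashing functor $\Gamma_I$ on $\Stable_{\Psi}$ along $\omega$ as in the proof of \Cref{prop:cohomequivalence}. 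Idempotency then yields $\mathbf{R}T_I^{\Psi}(T_I^{\Psi}J) \simeq \mathbf{R}T_I^{\Psi}\mathbf{R}T_I^{\Psi}J \simeq T_I^{\Psi}J$, which is exactly the sought $T_I^{\Psi}$-acyclicity. Combined with the image identification from the dual of \cite[Prop.~1.3.3.7]{ha}, this completes the proof.
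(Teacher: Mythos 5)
Your proposal follows the paper's skeleton up to and including the key reduction: the induced $t$-structure via \Cref{prop:cohomequivalence} and \Cref{lem:tstructureinduced}, the construction of $\theta$ and the reduction of full faithfulness (via the dual of \cite[Prop.~1.3.3.7]{ha}) to the vanishing of $\Ext^q_{\cD(\Psi)}(X,Y)$ for $q>0$ with $Y$ injective in $\Comod_{\Psi}^{I-\mathrm{tors}}$, and the observation that such a $Y$ is a retract of $T_I^{\Psi}(J)$ for some injective $\Psi$-comodule $J$ --- all of this matches the paper. The divergence, and the genuine gap, is in how you establish the vanishing. You route it through the derived adjunction $\cD(\iota)\dashv\mathbf{R}T_I^{\Psi}$ and the claim that $\mathbf{R}T_I^{\Psi}$, as an endofunctor of $\cD(\Psi)$, is smashing and hence idempotent, with idempotency ``transferring from the smashing functor $\Gamma_I$ on $\Stable_{\Psi}$ along $\omega$ as in the proof of \Cref{prop:cohomequivalence}.'' That proof establishes no such transfer: it uses the spectral sequence of \Cref{lem:sscomodules} to compare $\Gamma_I$ on $\Stable_{\Psi}$ with the cohomology of objects of $\cD(\Psi)$, but it never identifies $\omega\,\Gamma_I\,\iota_*$ with the right derived functor $\mathbf{R}T_I^{\Psi}$ of the abelian torsion functor, nor does it mention $\mathbf{R}T_I^{\Psi}$ at all. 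These are a priori different functors --- one defined by abstract colocalization in $\Stable_{\Psi}$, the other by injective resolutions in $\cD(\Psi)$ --- and identifying them (equivalently, proving $\mathbf{R}T_I^{\Psi}\simeq \colim_k \iHom_{\cD(\Psi)}(A/I^k,-)\simeq \omega\Gamma_I A\otimes -$) is a substantive statement about comodule homological algebra, of essentially the same depth as \Cref{thm:derder}(2) itself; since \Cref{prop:ff} feeds into the proof of that theorem, your argument also skirts circularity. Note that the paper deliberately avoids claims of exactly this type: it remarks that it does not know how to construct the spectral sequence of \Cref{lem:sscomodules} within $\cD(\Psi)$, which is the whole reason \Cref{thm:derder}(2) is proven by passing through $\Stable_{\Psi}$ rather than working in $\cD(\Psi)$ directly.

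The fix is more elementary and is what the paper does: having reduced to $Y$ a retract of $T_I^{\Psi}(\Psi\otimes Q)$ with $Q$ an injective $A$-module (every injective $\Psi$-comodule is a retract of an extended one), use the adjunction identity $T_I^{\Psi}(\Psi\otimes Q)\cong \Psi\otimes T_I^A(Q)$ together with the classical fact that $T_I^A(Q)$ is again an injective $A$-module. Then $Y$ is injective as a $\Psi$-comodule --- not merely injective in the torsion subcategory --- so $\Ext^q_{\Psi}(X,Y)=0$ for $q>0$ is immediate, computing $\Ext$ in comodules by an injective resolution of the second variable; no derived torsion functor on $\cD(\Psi)$, and no smashing or idempotency claim, is needed.
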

\begin{proof}
	Using \Cref{prop:cohomequivalence} we see that if $M \in \Dtors(\Psi)$, then so are the truncations $\tau_{\ge 0}M$ and $\tau_{\le 0}M$. It follows from  \Cref{lem:tstructureinduced} that there is an induced $t$-structure on $\Dtors(\Psi)$ with heart equivalent to $\Comod_{\Psi}^{I-\mathrm{tors}}$, such that the inclusion $\Dtors(\Psi) \hookrightarrow \D(\Psi)$ is $t$-exact. The induced $t$-structure is right complete because colimits in $\Dtors(\Psi)$ are computed in $\D(\Psi)$, which is right complete. By the duals of \cite[Thm.~1.3.3.2 and Rem.~1.3.3.6]{ha} we deduce the existence of a $t$-exact functor $\theta \colon \cD^{+}(\Comod_{\Psi}^{I-\mathrm{tors}}) \to \Dtors(\Psi)$, see also \cite[Rem.~1.3.5.23]{ha}. 

	Let $J$ be an injective object in the category of $I$-torsion $\Psi$-modules. Dualizing \cite[Prop.~1.3.3.7]{ha} we deduce that if $\pi_n\Hom_{\Dtors(\Psi)}(X,J) \cong \pi_n\Hom_{\cD(\Psi)}(X,J) = 0$ for all $n < 0$ and each $I$-torsion $\Psi$-comodule $X$, then $\theta$ is fully faithful, with essential image the full subcategory of left bounded objects of $\Dtors(\Psi)$. It is not hard to check that any such $J$ is a retract of $T^{\Psi}_I(L)$, for some injective $\Psi$-comodule $L$. Moreover, any such $L$ is a retract of $\Psi \otimes Q$ for some injective $A$-module $Q$ \cite[Lem.~2.1(c)]{hs_localcohom}, and so we can assume that $J$ has the form $T^{\Psi}_I(\Psi \otimes Q)$. But an adjointness argument shows that there is an equivalence $T^{\Psi}_I(\Psi \otimes Q) \cong \Psi \otimes T_I^A(Q)$. Since $Q$ is an injective $A$-module, so is $T_I^A(Q)$ \cite[Prop.~2.1.4]{broadmann_sharp}. It follows that $T^{\Psi}_I(\Psi \otimes Q)$ is an injective $\Psi$-comodule. It is then clear that $\pi_n\Hom_{\cD(\Psi)}(X,T^{\Psi}_I(\Psi \otimes Q)) = 0$ for $n < 0$, because this group is isomorphic to $\Ext^{-n}_{\Psi}(X,T^\Psi_I(\Psi \otimes Q))$, and $\Ext$ in  comodules can be computed via an injective resolution of the second variable.  
\end{proof}

In order to prove an unbounded version of the previous proposition, we first prove a stable analogue of \Cref{thm:derder}(2). 

\begin{prop}\label{prop:stabletor}
There is a canonical equivalence $\Stable(\Comod_{\Psi}^{I-\mathrm{tors}}) \simeq \Stable_{\Psi}^{I-\mathrm{tors}}$.
\end{prop}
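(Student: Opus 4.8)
The plan is to reduce the asserted equivalence to a comparison of the underlying small categories of compact objects, and then to route that comparison through the ordinary derived category $\cD(\Psi)$. Both sides are, by construction, $\Ind$-completions of explicit thick subcategories: by definition $\Stable(\Comod_{\Psi}^{I-\mathrm{tors}}) = \Ind\Thick_{\cD(\Comod_{\Psi}^{I-\mathrm{tors}})}(\cG_{\Psi} \otimes A/I)$, while $\Stable_{\Psi}^{I-\mathrm{tors}} = \Loc^{\otimes}_{\Stable_{\Psi}}(A/I)$ is compactly generated (since $A/I$ is compact and dualizable in $\Stable_{\Psi}$) with compact objects $\Thick_{\Stable_{\Psi}}(\cG_{\Psi} \otimes A/I)$, so that $\Stable_{\Psi}^{I-\mathrm{tors}} = \Ind\Thick_{\Stable_{\Psi}}(\cG_{\Psi} \otimes A/I)$. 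Because $\Ind$ takes an equivalence of small stable categories to an equivalence of $\Ind$-completions, it suffices to produce an equivalence
\[
\Thick_{\cD(\Comod_{\Psi}^{I-\mathrm{tors}})}(\cG_{\Psi} \otimes A/I) \simeq \Thick_{\Stable_{\Psi}}(\cG_{\Psi} \otimes A/I)
\]
identifying the two copies of the generating set $\cG_{\Psi} \otimes A/I$.

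First I would treat the left-hand side. Every object of $\Thick_{\cD(\Comod_{\Psi}^{I-\mathrm{tors}})}(\cG_{\Psi} \otimes A/I)$ is obtained from objects of the heart by finitely many shifts, cofibers and retracts, hence is bounded and in particular lies in $\cD^{+}(\Comod_{\Psi}^{I-\mathrm{tors}})$. The fully faithful, $t$-exact functor $\theta\colon \cD^{+}(\Comod_{\Psi}^{I-\mathrm{tors}}) \hookrightarrow \Dtors(\Psi)$ of \Cref{prop:ff} therefore restricts to a fully faithful exact functor on this thick subcategory, sending each generator to the corresponding torsion comodule regarded as an object of $\cD(\Psi)$. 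Since $\Dtors(\Psi)$ is a thick subcategory of $\cD(\Psi)$ containing these generators, the essential image of $\theta$ on the thick subcategory is exactly $\Thick_{\cD(\Psi)}(\cG_{\Psi} \otimes A/I)$, yielding an equivalence $\Thick_{\cD(\Comod_{\Psi}^{I-\mathrm{tors}})}(\cG_{\Psi} \otimes A/I) \simeq \Thick_{\cD(\Psi)}(\cG_{\Psi} \otimes A/I)$.

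Next I would compare the right-hand side with the same subcategory of $\cD(\Psi)$ via the adjunction $\omega \dashv \iota_*$ of \Cref{prop:stableprops}. The key point is that every object of $\Thick_{\Stable_{\Psi}}(\cG_{\Psi} \otimes A/I)$ is bounded, hence truncated in $\Stable_{\Psi}$; under the identification from \Cref{prop:stableprops} of $\cD(\Psi)$ with the hypercomplete objects inside $\Stable_{\Psi}$, truncated objects are automatically hypercomplete, so the unit $X \to \iota_*\omega X$ is an equivalence for every such $X$. Since $\omega$ acts as the identity on the heart $\Comod_{\Psi}$, this shows $\iota_*$ carries each generator $\cG_{\Psi} \otimes A/I$ of $\cD(\Psi)$ to the corresponding object of $\Stable_{\Psi}$; as $\iota_*$ is fully faithful, it restricts to an equivalence $\Thick_{\cD(\Psi)}(\cG_{\Psi} \otimes A/I) \simeq \Thick_{\Stable_{\Psi}}(\cG_{\Psi} \otimes A/I)$, inverse to $\omega$. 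Composing with the equivalence of the previous paragraph and applying $\Ind$ gives the theorem.

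The main obstacle is this last comparison: showing that passing between $\Stable_{\Psi}$ and $\cD(\Psi)$ does not alter the relevant mapping spectra. The functor $\omega$ is not fully faithful in general (for instance $A$ is compact in $\Stable_{\Psi}$ but not in $\cD(\Psi)$), so the argument genuinely relies on the fact that the whole thick subcategory consists of bounded torsion objects, together with the hypercompleteness of truncated objects furnished by the sheaf-theoretic description in \Cref{prop:stableprops}. If one wished to sidestep that description, the same conclusion could be reached by verifying full faithfulness of $\omega$ directly on the generators: using dualizability of $G \in \cG_{\Psi}$ one rewrites $\Hom_{\Stable_{\Psi}}(G \otimes A/I, G' \otimes A/I[n])$ as $\Hom_{\Stable_{\Psi}}(A/I, H \otimes A/I[n])$ for a dualizable comodule $H = G^{\vee}\otimes G'$, and then checks that the unit is an equivalence on the single torsion comodule $H \otimes A/I$.
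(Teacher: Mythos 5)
Your proposal is correct, but its second half takes a genuinely different route from the paper. The first half coincides: both you and the paper use \Cref{prop:ff} to embed the bounded torsion derived category into $\cD(\Psi)$ and then pass to $\Ind$-categories. After that the paper proceeds ``from above'': it obtains a fully faithful continuous functor $\Stable(\Comod_{\Psi}^{I-\mathrm{tors}}) \to \Stable_{\Psi}$ and identifies its essential image with $\Stable_{\Psi}^{I-\mathrm{tors}}$ by a two-sided inclusion, where the nontrivial inclusion is proved by applying the local cohomology spectral sequence of \Cref{lem:sscomodules} to show that $\Gamma_I^{\Psi}X \to X$ is an equivalence for every compact $X$ in the image. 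You instead stay entirely at the level of compact objects: you invoke Neeman's theorem to write $\Stable_{\Psi}^{I-\mathrm{tors}} = \Ind\Thick_{\Stable_{\Psi}}(\cG_{\Psi}\otimes A/I)$, identify the image of the compacts under $\theta$ as exactly $\Thick_{\cD(\Psi)}(\cG_{\Psi}\otimes A/I)$, and match the two thick subcategories before applying $\Ind$. This avoids \Cref{lem:sscomodules} altogether, which is a genuine simplification; the price is that you must know that the image of a thick subcategory under a fully faithful exact functor with idempotent-complete source is again thick (your stated justification --- that $\Dtors(\Psi)$ is thick and contains the generators --- only gives one of the two inclusions), and you need the Neeman--Thomason identification of the compacts of $\Loc_{\Stable_{\Psi}}(\cG_{\Psi}\otimes A/I)$. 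One further remark: the step you flag as the main obstacle is not one. Since $\Stable_{\Psi}$ is \emph{defined} as $\Ind(\Thick_{\cD(\Psi)}(\cG_{\Psi}))$, its compact objects form the thick subcategory $\Thick_{\cD(\Psi)}(\cG_{\Psi})$ of $\cD(\Psi)$ with mapping spectra computed there, and $\omega$ restricted to compacts is the tautological inclusion; moreover a formal adjunction argument (write any $Z \in \Stable_{\Psi}$ as a filtered colimit of compacts and compare corepresented functors) shows the unit $X \to \iota_*\omega X$ is an equivalence for every compact $X$. So the equivalence $\Thick_{\Stable_{\Psi}}(\cG_{\Psi}\otimes A/I) \simeq \Thick_{\cD(\Psi)}(\cG_{\Psi}\otimes A/I)$ holds by construction, and your appeal to Pstr\k{a}gowski's sheaf model and to hypercompleteness of truncated objects --- while correct --- imports machinery (the comparison of $t$-structures, truncated-implies-hypercomplete for sheaves of spectra) that is not needed and that \Cref{prop:stableprops} does not itself supply.
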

\begin{proof}
Note that by \Cref{prop:ff} the natural functor $\cD^+(\Comod_{\Psi}^{I-\mathrm{tors}}) \to \cD^+(\Psi)$ is fully faithful. It follows that there is an exact fully faithful functor 
\[
\xymatrix{\Thick_{\cD^+(\Comod_{\Psi}^{I-\mathrm{tors}})}(\cG_{\Psi} \otimes A/I) \ar[r] & \Thick_{\cD^+(\Psi)}(\cG_{\Psi})},
\]
since $A/I \in \Thick_{\cD^+(\Psi)}(\cG_{\Psi})$. Passing to ind-categories thus gives a continuous fully faithful functor 
\[
\xymatrix{\Stable(\Comod_{\Psi}^{I-\mathrm{tors}})  \ar[r] & \Stable_{\Psi}.}
\]
Since $\cG_{\Psi} \otimes A/I \subseteq \Stable(\Comod_{\Psi}^{I-\mathrm{tors}})$, we see that $\Loc(\cG_{\Psi} \otimes A/I) = \Stable_{\Psi}^{I-\mathrm{tors}}$ is contained in the essential image of $\Stable(\Comod_{\Psi}^{I-\mathrm{tors}})$ under the natural inclusion. To prove the converse, let $X \in \Stable(\Comod_{\Psi}^{I-\mathrm{tors}})^{\omega}$ and consider the canonical map $\Gamma^{\Psi}_I X \xr{i} X$. The local cohomology spectral sequence of \Cref{lem:sscomodules} implies that $\epsilon_*(i)$ is an equivalence in $\cD(A)$. By compactness of $X$, it suffices to show that $i$ is an equivalence in $\Stable_{\Psi}$, so that $X \in \Stable_{\Psi}^{I-\mathrm{tors},\omega}$. As both categories are compactly generated and the inclusions are continuous, we obtain that (the essential image of) $\Stable(\Comod_{\Psi}^{I-\mathrm{tors}})$ is also contained in $\Stable_{\Psi}^{I-\mathrm{tors}}$, as desired. 
\end{proof}
We can now prove \Cref{thm:derder}(2) in full generality. 
\begin{proof}[Proof of \Cref{thm:derder}(2).]
	Let $\omega \Stable_{\Psi}^{I-\mathrm{tors}} \subset \cD(\Psi)$ denote the essential image of $\Stable_{\Psi}^{I-\mathrm{tors}}$ under $\omega$ (the functor which inverts quasi-isomorphism). We have shown in \Cref{lem:localomega} that $\omega \Stable_{\Psi}^{I-\mathrm{tors}} \subseteq \cD^{I-\mathrm{tors}}(\Psi)$. For the converse, we recall from \cite[Prop.~5.24]{bhv} that for any $M \in \Stable_{\Psi}$ we have 
	\[
\Gamma_I M \simeq \Gamma_I A \otimes M \simeq \colim_k D(A/I^k) \otimes M
	\]
	where $D(A/I^k) = \iHom_{\Psi}(A/I^k,A)$ denotes the dual of $A/I^k$ in $\Stable_{\Psi}$. Here we can drop the assumption from \cite{bhv} that $A$ is Noetherian by using  Pstr\k{a}gowski's work again, see the proof of \Cref{prop:stableprops}. In particular, since $\omega$ is symmetric monoidal and commutes with colimits, any object in $\omega \Stable_{\Psi}^{I-\mathrm{tors}}$ can be written in the form $\colim \omega D(A/I^k) \otimes \omega M$. By \cite[Lem.~5.13]{bhv} $A/I^k$ is compact and dualizable in $\Stable_{\Psi}$, and so we can apply \cite[Lem.~4.22]{bhv} to see that 
	\[
\omega D(A/I^k) = \omega\iHom_{\Stable_{\Psi}}(A/I^k,A) \simeq \iHom_{\cD(\Psi)}(A/I^k,A).
	\]
	We are thus reduced to showing that the dual $\iHom_{\cD(\Psi)}(A/I^k,A)$ is in $\cD^{I-\mathrm{tors}}(\Psi)$. For brevity, let us denote the dual in $\cD(\Psi)$ by $D_{\Psi}(-)$, so that we are trying to prove that $D_{\Psi}(A/I^k) \in \Loc^{\otimes}(A/I)$. Since $\omega$ is symmetric monoidal, it preserves dualizable objects, so that $A/I^k$ is dualizable in $\cD(\Psi)$, and hence so is $D_{\Psi}(A/I^k)$. By \cite[Lem.~A.2.6]{hps_axiomatic} we see that $D_{\Psi}(A/I^k)$ is a retract of $D_{\Psi}(A/I^k) \otimes D^2_{\Psi}(A/I^k) \otimes D_{\Psi}(A/I^k)$. But $D_{\Psi}^2(A/I^k) \simeq A/I^k$, so that $D_{\Psi}(A/I^k) \in \Loc^{\otimes}(A/I^k)$ as required. It follows that $\omega \Stable_{\Psi}^{I-\mathrm{tors}} \simeq \cD^{I-\mathrm{tors}}(\Psi)$. 

	On the other hand, it follows from the construction of  $\Stable(\Comod_{\Psi}^{I-\mathrm{tors}})$ that after inverting quasi-isomorphisms we recover the usual derived category $\cD(\Comod_{\Psi}^{I-\mathrm{tors}})$ (compare to \cite[Sec.~4.2]{bhv}). 

	It follows that after inverting quasi-isomorphism in \Cref{prop:stabletor} we obtain an equivalence $\cD(\Comod_{\Psi}^{I-\mathrm{tors}}) \simeq \Dtors(\Psi)$ as claimed. 
\end{proof}

\bibliography{duality}\bibliographystyle{alpha}
\end{document}